\theoremstyle{plain}
\newtheorem{theorem}{Theorem}[section]
\newtheorem{lemma}[theorem]{Lemma}
\newtheorem{corollary}[theorem]{Corollary}
\newtheorem{proposition}[theorem]{Proposition}
\theoremstyle{definition}
\newtheorem{definition}[theorem]{Definition}
\theoremstyle{remark}
\newtheorem{remark}{Remark}[section]
\DeclareMathOperator{\dom}{dom}
\DeclareMathOperator{\diag}{diag}
\DeclareMathOperator{\Ker}{Ker}
\DeclareMathOperator{\Image}{Im}
\author{M.V. Dolgopolik\footnote{Institute for Problems in Mechanical Engineering of the Russian Academy of Sciences,
Saint Petersburg, Russia}\footnote{This work was performed in IPME RAS and supported by the Russian Science Foundation
(Grant No. 20-71-10032).}}
\title{Exact augmented Lagrangians for constrained optimization problems in Hilbert spaces {I}: Theory}
\begin{document}

\maketitle

\begin{abstract}
In this two-part study, we develop a general theory of the so-called exact augmented Lagrangians for constrained
optimization problems in Hilbert spaces. In contrast to traditional nonsmooth exact penalty functions, these augmented
Lagrangians are continuously differentiable for smooth problems and do not suffer from the Maratos effect, which makes
them especially appealing for applications in numerical optimization. Our aim is to present a detailed study of various
theoretical properties of exact augmented Lagrangians and discuss several applications of these functions to
constrained variational problems, problems with PDE constraints, and optimal control problems.

The first paper is devoted to a theoretical analysis of an exact augmented Lagrangian for optimization problems in
Hilbert spaces. We obtain several useful estimates of this augmented Lagrangian and its gradient, and present several
types of sufficient conditions for KKT-points of a constrained problem corresponding to locally/globally optimal
solutions to be local/global minimisers of the exact augmented Lagrangian.
\end{abstract}

\section{Introduction}

The concept of \textit{exactness} of a penalty function was first introduced by Eremin \cite{Eremin} and Zangwill
\cite{Zangwill} in the mid-1960s. The penalty function $F_c(x) = f(x) + c \| h(x) \|$ for the constrained optimization
problem 
\[
  \min \: f(x) \quad \text{subject to} \quad h(x) = 0
\]
is called exact, if for any sufficiently large (but finite) value of the penalty parameter $c > 0$ its points of
global/local minimum coincide with globally/locally optimal solutions of the constrained problem. It turned out that
penalty functions for many convex and nonconvex constrained optimization problems are exact under relatively mild
assumptions
\cite{HanMangasarian,DiPilloGrippo_88_ExPen,ExactBarrierFunc,Demyanov,Zaslavski,Zaslavski2009,Zaslavski2013,
Dolgopolik_ExPenFunctions} ,
which made them one of the cornerstones of constrained optimization for several decades. However, exact penalty
functions have several drawbacks. Firstly, they are inherently nonsmooth (see, e.g.
\cite[Remark~3]{Dolgopolik_ExPenFunctions} for details), which means that one either has to develop specific
numerical methods for minimising such functions (see, e.g. \cite{MaratosPHD,Polak_book}) or use them as merely an
auxiliary tool for stepsize evaluation, e.g. in SQP methods \cite{NocedalWright,BonnansGilbert,ConnGouldToint}.
Secondly, numerical methods based on nonsmooth exact penalty functions often suffer from the Maratos effect, which
makes them significantly less appealing for applications (see \cite[Example~15.4]{NocedalWright} and
\cite[Example~17.6]{BonnansGilbert} for simple particular examples of the Maratos effect, and \cite{Coope} for a
discussion of ways to counter this effect).
 
In 1970, Fletcher \cite{Fletcher70} introduced a new penalty function, which overcomes drawbacks of traditional
nonsmooth penalty functions. Namely, under natural assumptions Fletcher's penalty function is exact, continuously
differentiable, and robust with respect to the Maratos effect (see \cite{NocedalWright,ConnGouldToint}). This penalty
function is, in essence, the Hestenes-Powell-Rockafellar augmented Lagrangian 
$\mathscr{L}(x, \lambda, c) = f(x) + \langle \lambda, h(x) \rangle + c \| h(x) \|^2$
(see \cite{BirginMartinez,Rockafellar1974,Rockafellar1993,ItoKunisch}), in which the Lagrange multipliers $\lambda$
are replaced
by their estimates $\lambda(x)$, computed as a solution of a system of linear equations related to the KKT optimality 
conditions. Fletcher's penalty function and its various modifications were studied in details in
\cite{Fletcher73,HanMangasarian_C1PenFunc,DiPilloGrippo85,Lucidi92,ContaldiDiPilloLucidi_93,FukudaSilva}. Despite its
merits, Fletcher's penalty function has been traditionally considered impractical due to the fact that each evaluation
of this function (and especially its gradient) is very computationally expensive. Nevertheless, an efficient
implementation of a constrained optimization method based on Fletcher's penalty function was recently proposed in
\cite{EstrinFriedlander1,EstrinFriedlander2}.
 
In 1979, Di Pillo and Grippo \cite{DiPilloGrippo1979} introduced a new augmented Lagrangian for equality constrained
optimization problems, which can be viewed as a modification of Fletcher's penalty function. Instead of replacing 
the Lagrange multipliers $\lambda$ in the the augmented Lagrangian 
$\mathscr{L}(x, \lambda, c) = f(x) + \langle \lambda, h(x) \rangle + c \| h(x) \|^2$ with their estimates, Di Pillo
and Grippo proposed to add an auxiliary term to this function that penalizes the violation of the KKT optimality
conditions and is directly connected to the system of linear equations for computing the estimate $\lambda(x)$ for
Fletcher's penalty function. In \cite{DiPilloGrippo1979}, it was shown that under some additional assumptions,
local/global minimisers of the Di Pillo-Grippo augmented Lagrangian \textit{jointly in primal and dual variables} are
precisely KKT-points of the equality constrained problem corresponding to its locally/globally optimal solutions. This
analogy with exact penalty functions led to the fact that Di Pillo-Grippo augmented Lagrangians were later on called
\textit{exact}. Furtermore, just like Fletcher's penalty function, Di Pillo-Grippo augmented Lagrangian does not suffer
from the Maratos effect.

The exact augmented Lagrangian from \cite{DiPilloGrippo1979} was extended to the case of inequality constrained
problems in \cite{DiPilloGrippo1982}. The augmented Lagrangian from \cite{DiPilloGrippo1982} was further modified,
analysed, and applied to various inequality constrained optimization problems in
\cite{DiPilloGrippo1982,Lucidi1988,DiPilloLucidi1996,DiPilloLucidi2001,DiPilloEtAl2002,DiPilloLiuzzi2003,LuoWuLiu2013}.
Exact augmented Lagrangians for equality constrained problems were further studied in
\cite{DuZhangGao2006,DuLiangZhang2006}, while such augmented Lagrangians for problems with equality and two-sided
(box) constraints were discussed in \cite{DiPilloLucidiPalagi1993,DiPilloLiuzzi2011}. Numerical methods for
solving constrained optimization problems based on exact augmented Lagrangians were studied in
\cite{DiPilloLucidiPalagi2000,DiPilloLucidi2005,DiPilloLiuzzi}.

Fukuda and Louren\c{c}o \cite{FukudaLourenco} extended the theory of exact augmented Lagrangians to the case of
nonlinear semidefinite programming problems. Finally, a general theory of exact augmented Lagrangians for cone
constrained optimization problems was developed by the author in \cite{Dolgopolik_AugmLagr}. In particular, in
\cite{Dolgopolik_AugmLagr} it was shown that one can construct an exact augmented Lagrangian from many other other
augmented Lagrangians apart from the Hestenes-Powell-Rockafellar augmented Lagrangian. However, to the best of the
author's knowledge, all existing results on exact augmented Lagrangians were obtained only in the finite dimensional
case.

The main of goal of this study is to develop a general theory of exact augmented Lagrangians for optimization problems
in infinite dimensional spaces and to develop new exact augmented Lagrangian methods for solving these problems. 
The motivation behind such extension is based on the fact that exact augmented Lagrangians have been used to develop
efficient superlinearly convergent optimization methods for nonlinear programming problems that are robust with respect
to the Maratos effect (see
\cite{DiPilloLucidi2001,DiPilloEtAl2002,DiPilloLiuzzi2003,DiPilloLucidiPalagi1993,DiPilloLiuzzi2011}). Our goal is to
develop a theory that would allow one to extend these methods to optimization problems in infinite dimensional
spaces, such as optimal control problems and problems with PDE constraints.

The first part of our study is devoted to a theoretical analysis of exact augmented Lagrangians for optimization
problems in Hilbert spaces. We restrict our consideration to the Hilbert space setting, since it is unclear whether
exact augmented Lagrangians can be defined in a more general case of optimization problems in Banach spaces.
Nevertheless, in the second paper we will show that in many particular cases, numerical methods based on exact augmented
Lagrangians work well for problems in Banach spaces, although our theoretical results do not permit such a general
problem setting.

In this paper, we introduce an exact augmented Lagrangian for optimization problems in real Hilbert spaces with
inequality and nonlinear operator equality constraints. We study some properties of this augmented Lagrangian and
obtain some useful estimates of this function and its gradient, which play a crucial role in the analysis of its
exactness. We also obtain sufficient conditions for the exact augmented Lagrangian to have bounded sublevel sets and
study its exact penalty properties with the use of a nonlocal constraint qualification, which is closely related to
the linear independence constraint qualification and conditions on nonlocal metric regularity of constraints. In
particular, we show that under some general assumptions, local/global minimisers of the exact augmented Lagrangian
jointly in primal and dual variables are precisely KKT-points of the original problem corresponding to its
locally/globally optimal solutions, provided the penalty parameter is sufficiently large. Various applications of the
theory developed in this paper to particular classes of constrained variational problems, problems with PDE constraints,
and optimal control problems will be presented in the second part of our study.

The paper is organized as follows. The problem statement and the definition of the exact augmented Lagrangian for
optimization problems in Hilbert spaces are discussed in Section~\ref{sect:Definition}. Various properties of the
augmented Lagrangian are studied in Section~\ref{sect:Properties}. In particular, this section contains sufficient
conditions for sublevel sets of the exact augmented Lagrangian to be bounded. Some useful estimates of the gradient of
the augmented Lagrangian are collected in Section~\ref{sect:GradientProperties}. Finally,
Seciton~\ref{sect:Exactness} is devoted to various sufficient conditions for the local/global exactness of the augmented
Lagrangian introduced in this paper.

\section{The definition of exact augmented Lagrangian}
\label{sect:Definition}

Let $X$ and $H$ be real Hilbert spaces. Throughout this article we study the following constrained optimization
problem:
\[
  \min \: f(x) \quad \text{subject to} \quad F(x) = 0, \quad g_i(x) \le 0, \quad i \in M.
  \eqno{(\mathcal{P})}
\]
Here $f, g_i \colon X \to \mathbb{R}$ and $F \colon X \to H$ are given functions, and $M = \{ 1, \ldots, m \}$. Below, 
we suppose that there exists a globally optimal solution of the problem $(\mathcal{P})$. Our aim is to reduce 
the problem $(\mathcal{P})$ to a completely equivalent \textit{unconstrained} problem of minimising a certain augmented
Lagrangian in primal and dual variables simultaneously. Following Di Pillo, Grippo, and Lucidi
\cite{DiPilloGrippo1979,DiPilloGrippo1982,Lucidi1988,DiPilloLucidi1996,DiPilloLucidi2001}, we call such 
functions \textit{exact augmented Lagrangians}.

Let $\langle \cdot, \cdot \rangle$ be the inner product in $X$, $H$ or $\mathbb{R}^n$, depending on the context, and
$g(\cdot) = (g_1(\cdot), \ldots, g_m(\cdot))$. In the case when the functions $g_i$ are differentiable, we denote
by $\nabla g(x) y \in \mathbb{R}^m$ the vector whose $i$-th coordinate is $\langle \nabla g_i(x), y \rangle$, where 
$y \in X$. Let
\[
  L(x, \lambda, \mu) = f(x) + \langle \lambda, F(x) \rangle + \langle \mu, g(x) \rangle,
  \quad \lambda \in H, \quad \mu \in \mathbb{R}^m,
\] 
be the classical Lagrangian for the problem $(\mathcal{P})$. 

To include several particular cases into a general theory, choose a convex non-decreasing lower semicontinuous
(l.s.c.) function $\phi \colon [0, + \infty) \to [0, + \infty]$ such that $\phi(t) = 0$ if and only if $t = 0$, and 
$\dom \phi \ne \{ 0 \}$. In particular, one can define 
\[
  \phi(t) \equiv t \quad \text{ or } \quad
  \phi(t) = \begin{cases} 
    t / (\alpha - t), & \text{if } t \in [0, \alpha),
    \\
    + \infty, & \text{if } t \ge \alpha,
  \end{cases}
  \quad \text{ or } \quad \phi(t) = e^t - 1
\]
(here $\alpha > 0$ is fixed). From the assumptions on the function $\phi$ (in particular, its convexity) it follows
that $\phi$ is continuous on its effective domain, and either $\dom \phi = [0, + \infty)$ and $\phi(t) \to + \infty$ as
$t \to \infty$ or there exists $\alpha > 0$ such that $\dom \phi = [0, \alpha)$ and $\phi(t) \to + \infty$ as 
$t \to \alpha$.

Choose also a continuously differentiable concave function $\psi \colon [0, + \infty)^m \to \mathbb{R}$ such that
$\psi(0) > 0$, zero is a point of global maximum of $\psi$, and
\[
  \frac{\partial \psi}{\partial y_i} (y_1, \ldots, y_{i - 1}, 0, y_{i + 1}, \ldots, y_m) = 0
  \quad \forall y \in \mathbb{R}^m, \: i \in M.
\]
The equality on partial derivatives ensures that the function $\psi(\max\{ g(x), 0 \})$ is continuously differentiable,
provided the functions $g_i$ are differentiable. Note that one can define
\[
  \psi(y) \equiv 1 \quad \text{or} \quad \psi(y) = \beta - \sum_{i = 1}^m y_i^s
\]
for some $\beta > 0$ and $s > 1$.

Let $|\cdot|$ be the Euclidean norm in $\mathbb{R}^n$. Introduce the functions
\[
  b(x) = \psi\big( \max\{ g(x), 0 \} \big), \quad 
  p(x, \mu) = \frac{b(x)}{1 + |\mu|^2} \quad \forall x \in X, \: \mu \in \mathbb{R}^m.
\]
Denote $\Omega = \{ x \in X \mid b(x) > 0, \phi(\| F(x) \|^2) < + \infty \}$. It should be noted that the set
$\Omega$ is open, provided the functions $F$ and $g_i$ are continuous. For any vectors 
$y, z \in \mathbb{R}^m$, let $\max\{ y, z \} \in \mathbb{R}^m$ be the vector whose $i$-th coordinate 
is $\max\{ y_i, z_i \}$. The vector $\min\{ y, z \}$ is defined in the same way.

Finally, suppose that the functions $f$, $F$, and $g_i$, $i \in M$, are continuously Fr\'{e}chet differentiable and
introduce the following augmented Lagrangian:
\begin{multline} \label{eq:ExactAugmLagr}
  \mathscr{L}(x, \lambda, \mu, c) = f(x) 
  + \langle \lambda, F(x) \rangle  + \frac{c}{2} (1 + \| \lambda \|^2) \phi(\| F(x) \|^2) + 
  \\
  + \left\langle \mu, \max\left\{ g(x), -\frac{1}{c} p(x, \mu) \mu \right\} \right\rangle
  + \frac{c}{2 p(x, \mu)} \left| \max\left\{ g(x), -\frac{1}{c} p(x, \mu) \mu \right\} \right|^2
  \\
  + \eta(x, \lambda, \mu),
\end{multline}
if $x \in \Omega$, and $\mathscr{L}(x, \lambda, \mu, c) = + \infty$, otherwise. Here $\lambda \in H$ and 
$\mu \in \mathbb{R}^m$ are Lagrange multipliers, $c > 0$ is the penalty parameter,
\begin{equation} \label{eq:EtaDef}
\begin{split}
  \eta(x, \lambda, \mu) &= \frac{1}{2} \Big\| D F(x)\big[ \nabla_x L(x, \lambda, \mu) \big] \Big\|^2
  \\
  &+ \frac{1}{2} \sum_{i = 1}^m 
  \Big( \langle \nabla g_i(x), \nabla_x L(x, \lambda, \mu) \rangle + g_i(x)^2 \mu_i \Big)^2,
\end{split}
\end{equation}
and $D F(x)[\cdot] \colon X \to H$ is the Fr\'{e}chet derivative of the nonlinear operator $F$ at $x$, while 
$\nabla_x L(x, \lambda, \mu)$ is the gradient of the function $x \mapsto L(x, \lambda, \mu)$ (recall that $X$ is a
Hilbert space, which implies that the gradient of any real-valued differentiable function on $X$ is correctly defined).
Augmented Lagrangian \eqref{eq:ExactAugmLagr} is a natural extension of the definition of exact augmented Lagrangian
for mathematical programming problems from
\cite{DiPilloGrippo1979,Lucidi1988,DiPilloLucidi1996,DiPilloLucidi2001,DiPilloLiuzzi2003,Dolgopolik_AugmLagr} to 
the infinite dimensional case.

\begin{remark}
As one can readily verify, the following equality holds true:
\begin{multline*}
  \mathscr{L}(x, \lambda, \mu, c) = L(x, \lambda, \mu) 
  + \bigg[ \frac{c}{2} (1 + \| \lambda \|^2) \phi(\| F(x) \|^2) 
  \\
  + \frac{c}{2 p(x, \mu)} \big( |g(x)|^2 - |\min\{ 0, g(x) + c^{-1} p(x, \mu) \mu \}|^2 \big) \bigg]
  + \eta(x, \lambda, \mu),
\end{multline*}
if $x \in \Omega$. Observe that the augmented Lagrangian $\mathscr{L}(x, \lambda, \mu, c)$ consists of three terms.
The first one is just the standard Lagrangian $L(x, \lambda, \mu)$ for the problem $(\mathcal{P})$. The second term, 
roughly speaking, penalizes the violation of the constraints of the problem $(\mathcal{P})$ and resists an excessive 
increase of the norm of the Lagrange multipliers $\lambda$ and $\mu$. Finally, the term $\eta(x, \lambda, \mu)$, 
in a sense, penalizes the violation of the Karush-Kuhn-Tucker (KKT) optimality conditions.
\end{remark}

\section{Properties of the augmented Lagrangian}
\label{sect:Properties}

Let us present some auxiliary results of the augmented Lagrangian $\mathscr{L}(x, \lambda, \mu, c)$, which will be used
in the following sections. First, we point out continuity and differentiability properties of this function, which can
be readily verified directly.

\begin{proposition} \label{prp:Derivatives}
For any $c > 0$ the function $(x, \lambda, \mu) \mapsto \mathscr{L}(x, \lambda, \mu, c)$ is lower semicontinuous on 
$X \times H \times \mathbb{R}^d$ and continuous on its effective domain $\Omega \times H \times \mathbb{R}^d$.
Moreover, this function is continuously Fr\'{e}chet differentiable on $\Omega \times H \times \mathbb{R}^d$, provided
the functions $f$, $F$, and $g_i$, $i \in M$, are twice continuously Fr\'{e}chet differentiable on $\Omega$ and 
the function $\phi$ is continuously differentiable on its effective domain. Under these conditions for any 
$x \in \Omega$, $\lambda \in H$, $\mu \in \mathbb{R}^m$, and $c > 0$ one has
\begin{align*}
  \nabla_x &\mathscr{L}(x, \lambda, \mu, c) = \nabla_x L(x, \lambda, \mu) 
  + c (1 + \| \lambda \|^2) \phi'\big( \| F(x) \|^2 \big) D F(x)^*[F(x)]
  \\
  &+ \frac{c}{p(x, \mu)} \sum_{i = 1}^m \max\left\{ g_i(x), - \frac{1}{c} p(x, \mu) \mu_i \right\} \nabla g_i(x)
  \\
  &- \frac{c}{b(x) p(x, \mu)} \left| \max\left\{ g(x), - \frac{1}{c} p(x, \mu) \mu \right\} \right|^2
  \sum_{i = 1}^m \frac{\partial \psi}{\partial y_i}(\max\{ g(x), 0 \}) \nabla g_i(x)
  \\
  &+ \Big( D^2 F(x)[\nabla_x L(x, \lambda, \mu), \cdot] \Big)^* \Big[ D F(x)[\nabla_x L(x, \lambda, \mu)] \Big]
  \\
  &+ \Big( D_x (\nabla_x L(x, \lambda, \mu)) \Big)^* \Big[ D F(x)^*\big[ D F(x)[\nabla_x L(x, \lambda, \mu)] \big] \Big]
  \\
  &+ \sum_{i = 1}^m \Big( \langle \nabla_x L(x, \lambda, \mu), \nabla g_i(x) \rangle + g_i(x)^2 \mu_i \Big)
  \Big[ D(\nabla g_i(x))^*[\nabla_x L(x, \lambda, \mu)] 
  \\
  &+ \Big( D_x (\nabla_x L(x, \lambda, \mu)) \Big)^* [ \nabla g_i(x)] + 2 g_i(x) \mu_i \nabla g_i(x) \Big],
\end{align*}
and
\begin{align*}
  \nabla_{\lambda} \mathscr{L}(x, \lambda, \mu, c) &= F(x) + c \phi\big( \| F(x) \|^2 \big) \lambda
  \\
  &+ D F(x) \Big[ D F(x)^* \Big( D F(x)\big[ \nabla_x L(x, \lambda, \mu) \big] \Big) \Big]
  \\
  &+ \sum_{i = 1}^m \Big( \langle \nabla_x L(x, \lambda, \mu), \nabla g_i(x) \rangle + g_i(x)^2 \mu_i \Big) 
  D F(x)^*[\nabla g_i(x)],
\end{align*}
and
\begin{align*}
  \nabla_{\mu} \mathscr{L}(x, \lambda, &\mu, c) = \max\left\{ g(x), -\frac{1}{c} p(x, \mu) \mu \right\}
  \\
  &+ \frac{c}{b(x)} \left| \max\left\{ g(x), -\frac{1}{c} p(x, \mu) \mu \right\} \right|^2 \mu
  \\
  &+ \nabla g(x) \Big( D F(x)^*\Big[ D F(x) \big[ \nabla_x L(x, \lambda, \mu) \big] \Big] \Big)
  \\
  &+ \Big( Gr(x) + \diag(g_i(x)^2) \Big) \Big[ \nabla g(x) \nabla_x L(x, \lambda, \mu) + \diag(g_i(x)^2) \mu \Big],
\end{align*}
where $D^2 F(x)[\cdot, \cdot]$ is the second order Fr\'{e}chet derivative of the nonlinear operator $F$, $A^*$ is the
adjoint operator of a bounded linear operator $A$ mapping between Hilbert spaces, $D_x \nabla_x L(x, \lambda, \mu)$ is 
the Fr\'{e}chet derivative of the function $x \mapsto \nabla_x L(x, \lambda, \mu)$, and 
$Gr(x) = \{ \langle \nabla g_i(x), \nabla g_j(x) \rangle \}_{i, j \in M}$ is the Gram matrix of the vectors 
$\nabla g_i(x)$, $i \in M$.
\end{proposition}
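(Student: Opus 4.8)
The plan is to verify the continuity, lower semicontinuity, and differentiability claims directly from the defining formula \eqref{eq:ExactAugmLagr}, treating the augmented Lagrangian as a finite sum of composite functions and differentiating term by term. First I would address the topological claims. On the open set $\Omega \times H \times \mathbb{R}^m$ every constituent is a composition of continuous maps: $f$, $F$, $g_i$ and their derivatives are continuous by assumption, $\phi$ is continuous on its effective domain (as noted after its introduction), $b(x) = \psi(\max\{g(x),0\})$ is continuous and strictly positive on $\Omega$, so $p(x,\mu) = b(x)/(1+|\mu|^2)$ is continuous and strictly positive there, and hence the factors $1/p(x,\mu)$ and $1/b(x)$ appearing in \eqref{eq:ExactAugmLagr} and \eqref{eq:EtaDef} are well defined and continuous on $\Omega \times H \times \mathbb{R}^m$; the $\max\{\cdot,\cdot\}$ operations are continuous, so continuity on the effective domain follows. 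For lower semicontinuity on all of $X \times H \times \mathbb{R}^m$ I would use that $\mathscr{L} = +\infty$ off $\Omega$, that $\Omega$ is open, and that as a point approaches the boundary of $\Omega$ the value tends to $+\infty$: either $b(x) \to 0^+$, which forces the penalty terms carrying the factor $c/(2p(x,\mu))$ to blow up (since $p \to 0^+$ while the $\max$-term does not vanish in general), or $\phi(\|F(x)\|^2) \to +\infty$ by the boundary behaviour of $\phi$ recorded in the excerpt; in either case the $+\infty$ value is consistent with lower semicontinuity, and on the interior the function is continuous, so l.s.c.\ holds globally.

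For differentiability, I would invoke the chain rule on $\Omega \times H \times \mathbb{R}^m$ under the stated hypotheses that $f$, $F$, $g_i$ are twice continuously Fréchet differentiable and $\phi$ is $C^1$ on its effective domain. The one genuinely delicate term is $\eta(x,\lambda,\mu)$ in \eqref{eq:EtaDef}, because it involves $\nabla_x L(x,\lambda,\mu)$, whose Fréchet differentiability in $x$ requires $L$ to be twice differentiable in $x$ — this is exactly why the second-order hypothesis on $f,F,g_i$ is imposed. Differentiating $\eta$ produces the terms with $D^2F$, with $D_x(\nabla_x L)$, and with $D(\nabla g_i)$; I would compute $\nabla_x \mathscr{L}$ by applying the chain rule to $\tfrac12\|DF(x)[\nabla_x L]\|^2$ and to each squared summand, carefully transposing bounded linear operators via their adjoints to land back in $X$. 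The three blocks of $\nabla_x\mathscr{L}$ thus correspond to (i) the Lagrangian and constraint-penalty part, (ii) the $\psi$-correction term arising from differentiating $1/p(x,\mu)$ through $b(x)$, and (iii) the $\eta$-part.

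The other delicate point concerns the nonsmoothness of the $\max$ operations. The term $\max\{g(x), -c^{-1}p(x,\mu)\mu\}$ is only piecewise smooth, yet each summand enters $\mathscr{L}$ quadratically with the factor $c/(2p)$, and one checks that the two branches of the $i$-th component agree to first order across the switching surface $g_i(x) = -c^{-1}p(x,\mu)\mu_i$: this is the standard smoothing identity behind augmented Lagrangians, and it makes the $\mu$-penalty term continuously differentiable despite the kink. Simultaneously, the condition $\partial\psi/\partial y_i = 0$ on the face $y_i = 0$, imposed on $\psi$ in the excerpt, guarantees that $b(x) = \psi(\max\{g(x),0\})$ is $C^1$ even though $\max\{g(x),0\}$ is not, since the derivative of $\psi$ annihilates the nonsmooth direction wherever $g_i(x)$ crosses zero. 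I expect the main obstacle to be bookkeeping rather than conceptual: assembling the $\nabla_\lambda$ and $\nabla_\mu$ formulas requires differentiating $\eta$ in $\lambda$ and $\mu$ through $\nabla_x L$, which depends linearly on $(\lambda,\mu)$, so that $\partial(\nabla_x L)/\partial\lambda = DF(x)^*[\cdot]$ acting appropriately and $\partial(\nabla_x L)/\partial\mu_i = \nabla g_i(x)$; substituting these and collecting terms yields the Gram-matrix expression $(Gr(x)+\diag(g_i(x)^2))$ and the stated formulas. Since each step is a routine application of the chain and product rules once the smoothness of the $\max$- and $\psi$-terms is established, the verification is direct, as the statement asserts.
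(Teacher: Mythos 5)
Your proposal is correct and takes essentially the same route as the paper, which offers no written proof at all (it states only that these properties ``can be readily verified directly''): your sketch supplies precisely that direct verification and correctly isolates the only genuinely delicate points, namely the first-order matching of the two branches of the max-penalty term across the switching surface $g_i(x) = -c^{-1}p(x,\mu)\mu_i$, the vanishing of $\partial\psi/\partial y_i$ on the faces $y_i = 0$ which makes $b(x) = \psi(\max\{g(x),0\})$ continuously differentiable, the blow-up of the $c/(2p(x,\mu))$-term as $b(x) \to 0^+$ (where one should add the small observation that $b(x_0) = 0$ forces $g_i(x_0) > 0$ for some $i$, so the max-term is genuinely bounded away from zero while the linear term $\langle\mu,\max\{\cdot\}\rangle$ stays bounded), and the need for second-order smoothness of $f$, $F$, $g_i$ to differentiate $\eta$. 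Incidentally, carrying out your chain-rule computation for $\nabla_\lambda\mathscr{L}$ produces $DF(x)[\nabla g_i(x)]$ in the final sum rather than the paper's $DF(x)^*[\nabla g_i(x)]$, which is a typo in the statement (the adjoint maps $H \to X$, whereas $\nabla_\lambda\mathscr{L}$ must lie in $H$, consistent with equation \eqref{eq:MultipliersDerivExpr} later in the paper).
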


\begin{remark}
{(i)~In the case when there are no inequality constraints, the augmented Lagrangian $\mathscr{L}(x, \lambda, c)$ is $k$
times continuously Fr\'{e}chet differentiable in $(x, \lambda)$ on its effective domain for an arbitrary 
$k \in \mathbb{N}$, provided the functions $f$ and $F$ are $k + 1$ times continuously Fr\'{e}chet differentiable, while
the function $\phi$ is $k$ times continuously differentiable on its effective domain.
}

\noindent{(ii)~With the use of the previous proposition one can readily verify that if $(x, \lambda, \mu)$ is a KKT
point of the problem $(\mathcal{P})$ (i.e. $x$ is feasible for this problem, $\nabla_x L(x, \lambda, \mu) = 0$, and for
all $i \in M$ one has $\mu_i g_i(x) = 0$ and $\mu_i \ge 0$), then $\nabla_x \mathscr{L}(x, \lambda, \mu, c) = 0$, 
$\nabla_{\lambda} \mathscr{L}(x, \lambda, \mu, c) = 0$, and $\nabla_{\mu} \mathscr{L}(x, \lambda, \mu, c) = 0$ for all 
$c > 0$. Thus, KKT points of the problem $(\mathcal{P})$ are stationary points of the augmented Lagrangian 
$\mathscr{L}(x, \lambda, \mu, c)$. Below, we will show that under some additional assumptions the converse statement
holds true, that is, stationary points of the augmented Lagrangian are, in fact, KKT points of the problem 
$(\mathcal{P})$.
}
\end{remark}

Let us also obtain a simple, yet useful lower estimate of the augmented Lagrangian.

\begin{lemma} \label{lem:LowerEstimate}
Let there exist $\phi_0 > 0$ such that $\phi(t) \ge \phi_0 t$ for all $t \ge 0$ (or, equivalently, 
$\liminf_{t \to +0} \phi(t) / t > 0$). Then 
\begin{align*}
  \mathscr{L}(x, \lambda, \mu, c) &\ge f(x) + \frac{c}{2} \phi(\| F(x) \|^2) - \frac{1}{2 c \phi_0}
  \\
  &+ \frac{c}{2 b(x)} \left| \max\left\{ g(x), -\frac{1}{c} p(x, \mu) \mu \right\} \right|^2
  - \frac{\psi(0)}{2c} + \eta(x, \lambda, \mu)
  \\
  &\ge f(x) + \frac{c}{2} \phi(\| F(x) \|^2) - \frac{1}{2 c \phi_0}
  \\
  &+ \frac{c}{2 \psi(0)} \big| \max\{ g(x), 0 \} \big|^2
  - \frac{(1 + m)\psi(0)}{2c} + \eta(x, \lambda, \mu)
\end{align*}
for all $x \in X$, $\lambda \in H$, $\mu \in \mathbb{R}^m$, and $c > 0$.
\end{lemma}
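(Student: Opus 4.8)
The plan is to treat the nontrivial case $x \in \Omega$ directly and to estimate separately the three groups of terms in \eqref{eq:ExactAugmLagr}: the part depending on $\lambda$ and $F$, the part depending on $\mu$ and $g$, and the terms $f(x)$ and $\eta(x, \lambda, \mu)$, the last two of which we simply keep unchanged. For $x \notin \Omega$ one has $\mathscr{L}(x, \lambda, \mu, c) = +\infty$, while the right-hand sides of both inequalities are bounded below by a real quantity (they consist of $f(x)$, nonnegative terms involving $\phi$, squared norms, and $\eta$, minus finite constants), so both inequalities hold trivially there.

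First I would treat the $\lambda$-dependent part. Writing $\langle \lambda, F(x) \rangle + \frac{c}{2}(1 + \|\lambda\|^2)\phi(\|F(x)\|^2) = \frac{c}{2}\phi(\|F(x)\|^2) + \big[ \frac{c}{2}\phi(\|F(x)\|^2)\|\lambda\|^2 + \langle \lambda, F(x) \rangle \big]$ and minimising the bracketed quadratic in $\lambda$ (its minimum is $-\|F(x)\|^2/(2c\,\phi(\|F(x)\|^2))$ when $F(x) \ne 0$, and the bracket vanishes when $F(x) = 0$ since then $\phi(\|F(x)\|^2) = 0$), the hypothesis $\phi(t) \ge \phi_0 t$ gives $\|F(x)\|^2/\phi(\|F(x)\|^2) \le 1/\phi_0$. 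Hence this group is bounded below by $\frac{c}{2}\phi(\|F(x)\|^2) - \frac{1}{2c\phi_0}$.

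The main work is the $\mu$-dependent group. Set $w = \max\{ g(x), -\frac{1}{c}p(x,\mu)\mu \} \in \mathbb{R}^m$ and $p = p(x,\mu) > 0$ (recall $b(x) > 0$ on $\Omega$), so this group equals $\langle \mu, w \rangle + \frac{c}{2p}|w|^2$. Using the identity $1/p = (1 + |\mu|^2)/b(x)$ I would split $\frac{c}{2p}|w|^2 = \frac{c}{2b(x)}|w|^2 + \frac{c|\mu|^2}{2b(x)}|w|^2$ and then absorb the possibly-negative inner product into the extra quadratic: by Cauchy--Schwarz and Young's inequality, $\langle \mu, w \rangle \ge -|\mu|\,|w| \ge -\frac{c|\mu|^2}{2b(x)}|w|^2 - \frac{b(x)}{2c}$, whence $\langle \mu, w \rangle + \frac{c|\mu|^2}{2b(x)}|w|^2 \ge -\frac{b(x)}{2c} \ge -\frac{\psi(0)}{2c}$, the last step using $b(x) = \psi(\max\{g(x),0\}) \le \psi(0)$. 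This yields $\langle \mu, w \rangle + \frac{c}{2p}|w|^2 \ge \frac{c}{2b(x)}|w|^2 - \frac{\psi(0)}{2c}$, and adding the three groups gives the first displayed inequality. The delicate point is that $w$ itself depends on $\mu$ and that $p$ may be arbitrarily small, so one cannot simply discard the quadratic term; the identity $1/p = (1 + |\mu|^2)/b(x)$ is exactly what makes the coefficient $\frac{c|\mu|^2}{2b(x)}$ large enough to dominate the cross term $|\mu|\,|w|$ via Young's inequality, with $b(x)$ cancelling out of the resulting constant.

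Finally, to pass from the first inequality to the second I would use two elementary facts. Since $b(x) \le \psi(0)$ we have $\frac{c}{2b(x)}|w|^2 \ge \frac{c}{2\psi(0)}|w|^2$; and, coordinatewise, $w_i^2 = \max\{ g_i(x), -\frac{1}{c}p\mu_i \}^2 \ge \max\{ g_i(x), 0 \}^2$, which is immediate by distinguishing the case $g_i(x) \ge 0$ (where $w_i \ge g_i(x) \ge 0$) from $g_i(x) < 0$ (where the right-hand side is $0$). Therefore $\frac{c}{2b(x)}|w|^2 \ge \frac{c}{2\psi(0)}|\max\{g(x),0\}|^2$, and since $-\frac{\psi(0)}{2c} \ge -\frac{(1+m)\psi(0)}{2c}$ the second inequality follows; here the constant $(1+m)$ is not tight and merely absorbs slack.
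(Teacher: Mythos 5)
Your proof is correct, and for the second half it takes a genuinely simpler route than the paper. For the first displayed inequality your argument is essentially the paper's: the paper bounds $\langle\lambda, F(x)\rangle \ge -\|\lambda\|\,\|F(x)\|$, uses $\phi(\|F(x)\|^2)\|\lambda\|^2 \ge \phi_0\|F(x)\|^2\|\lambda\|^2$, splits $\tfrac{c}{2p(x,\mu)}|w|^2$ via $1/p = (1+|\mu|^2)/b(x)$ exactly as you do, and then invokes the quadratic minima $-t+\tfrac{c\phi_0}{2}t^2 \ge -\tfrac{1}{2c\phi_0}$ and $-t+\tfrac{c}{2\psi(0)}t^2 \ge -\tfrac{\psi(0)}{2c}$; your exact minimisation in $\lambda$ and your Young-inequality absorption of $\langle\mu,w\rangle$ into $\tfrac{c|\mu|^2}{2b(x)}|w|^2$ (with $b(x)$ cancelling from the constant, then $b(x)\le\psi(0)$) is the same completion of squares in different notation. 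Where you depart from the paper is the passage to the second inequality: the paper partitions the index set into $M_{\pm}(x,\mu,c)$ and $M_{\pm}(x)$, discards the $M_-(x,\mu,c)$ part of $|w|^2$, and compensates the lost terms on $M_-(x,\mu,c)\cap M_+(x)$ via the bound $|g_i(x)|\le\psi(0)/c$ there, arriving only at $|w|^2 \ge |\max\{g(x),0\}|^2 - m\,\psi(0)^2/c^2$ --- this $O(1/c^2)$ loss is precisely what produces the factor $(1+m)$ in the final constant. Your coordinatewise observation that $w_i = \max\{g_i(x), -\tfrac{1}{c}p(x,\mu)\mu_i\} \ge g_i(x)$, hence $w_i^2 \ge \max\{g_i(x),0\}^2$ in both cases $g_i(x)\ge 0$ and $g_i(x)<0$, yields the stronger pointwise bound $|w|^2 \ge |\max\{g(x),0\}|^2$ with no loss at all, so you in fact establish the lemma with the sharper constant $-\psi(0)/(2c)$ in place of $-(1+m)\psi(0)/(2c)$, from which the stated inequality follows a fortiori, exactly as you note. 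Your one-line treatment of $x\notin\Omega$ matches the paper's (the left-hand side is $+\infty$, which dominates any extended-real right-hand side), though strictly speaking the right-hand side there may itself be $+\infty$ rather than merely bounded below; the inequality holds regardless.
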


\begin{proof}
With the use of the inequality $\phi(t) \ge \phi_0 t$ and the Cauchy-Bunyakovsky-Schwarz inequality one obtains that
\begin{align*}
  \mathscr{L}(x, \lambda, &\mu, c) \ge f(x) + \frac{c}{2} \phi(\| F(x) \|^2)
  - \| \lambda \| \| F(x) \|  + \frac{c \phi_0}{2} \| \lambda \|^2 \| F(x) \|^2 + 
  \\
  &+ \frac{c}{2 b(x)} \left| \max\left\{ g(x), -\frac{p(x, \mu)}{c} \mu \right\} \right|^2
  - |\mu| \left| \max\left\{ g(x), -\frac{p(x, \mu)}{c} \mu \right\} \right|
  \\
  &+ \frac{c |\mu|^2}{2 \psi(0)} \left| \max\left\{ g(x), -\frac{p(x, \mu)}{c} \mu \right\}  \right|^2
  + \eta(x, \lambda, \mu)
\end{align*}
for all $x \in \Omega$, $\lambda \in H$, $\mu \in \mathbb{R}^m$, and $c > 0$. Hence applying the following obvious 
lower estimates
\[
  - t + \frac{c \phi_0}{2} t^2 \ge - \frac{1}{2 c \phi_0}, \quad
  - t + \frac{c}{2 \psi(0)} t^2 \ge - \frac{\psi(0)}{2c} \quad \forall t \in \mathbb{R}, 
\]
one obtains that
\begin{equation} \label{eq:AlmostLowerEstimate}
\begin{split}
  \mathscr{L}(x, \lambda, \mu, c) &\ge f(x) + \frac{c}{2} \phi(\| F(x) \|^2) - \frac{1}{2 c \phi_0}
  \\
  &+ \frac{c}{2 b(x)} \left| \max\left\{ g(x), -\frac{1}{c} p(x, \mu) \mu \right\} \right|^2
  - \frac{\psi(0)}{2c} + \eta(x, \lambda, \mu),
\end{split}
\end{equation}
for all $x \in \Omega$ and $\mu \in \mathbb{R}^m$. 

Denote
\[
  M_+(x, \mu, c) = \left\{ i \in M \Bigm| g_i(x) \ge p(x, \mu)\frac{\mu_i}{c} \right\}, \enspace
  M_{-}(x, \mu, c) = M \setminus M_+(x, \mu, c)
\]
and define $M_+(x) = \{ i \in M \mid g_i(x) \ge 0 \}$, $M_-(x) = M \setminus M_+(x)$. Observe that
\begin{align*}
  \left| \max\left\{ g(x), -\frac{p(x, \mu)}{c} \mu \right\} \right|^2
  &= \sum_{i \in M_+(x, \mu, c)} g_i(x)^2 
  + \sum_{i \in M_-(x, \mu, c)} p(x, \mu)^2 \frac{\mu_i^2}{c^2} 
  \\
  &\ge \sum_{i \in M_+(x, \mu, c)} g_i(x)^2
  = |\max\{ g(x), 0 \}|^2
  \\
  &- \sum_{i \in M_-(x, \mu, c) \cap M_+(x)} g_i(x)^2
  + \sum_{i \in M_+(x, \mu, c) \cap M_-(x)} g_i(x)^2.
\end{align*}
Hence taking into account the fact that $p(x, \mu)|\mu_i| / c \le \psi(0) / c$ for any $i \in M$, that is,
$|g_i(x)| \le \psi(0) / c$ for any $i \in M_-(x, \mu, c) \cap M_+(x)$, one obtains that 
\[
  \left| \max\left\{ g(x), -\frac{1}{c} p(x, \mu) \mu \right\} \right|^2
  \ge |\max\{ g(x), 0 \}|^2 - m \frac{\psi(0)^2}{c^2}.
\]
Combining this estimate and inequality \eqref{eq:AlmostLowerEstimate}, we arrive at the required result in the case 
$x \in \Omega$. The validity of the lemma in the case $x \notin \Omega$ is obvious. 
\end{proof}

Our next goal is to show that under some natural assumptions the function $\mathscr{L}(\cdot, c)$ has bounded sublevel
sets
\begin{equation} \label{def:SublevelSet}
  S_c(\gamma) := \Big\{ (x, \lambda, \mu) \in X \times H \times \mathbb{R}^m \Bigm| 
  \mathscr{L}(x, \lambda, \mu, c) \le \gamma \Big\}, \quad \gamma \in \mathbb{R},
\end{equation}
provided the penalty parameter $c > 0$ is sufficient large and a certain nonlocal (uniform) constraint qualification
holds true. This constraint qualification is reduced to the linear independence constraint qualification (LICQ) in 
the finite dimensional case and plays a key role in the derivation of most results on the augmented Lagrangian
$\mathscr{L}(x, \lambda, \mu, c)$ in this article. Note that in the finite dimensional case LICQ plays a similar role
(cf. \cite{DiPilloGrippo1979,DiPilloGrippo1982,Lucidi1988,DiPilloLucidi1996,DiPilloLucidi2001,DiPilloLiuzzi2003}).

To define the required constraint qualification, introduce the function 
\begin{equation} \label{eq:QuadFormDef}
\begin{split}
  Q(x)[\lambda, \mu] 
  &= \frac{1}{2} \Big\| D F(x)\Big[ D F(x)^*[\lambda] + \sum_{i = 1}^m \mu_i \nabla g_i(x) \Big] \Big\|^2
  \\
  &+ \frac{1}{2} \Big| \nabla g(x) \Big( D F(x)^*[\lambda] + \sum_{i = 1}^m \mu_i \nabla g_i(x) \Big) 
  + \diag(g_i(x)^2) \mu \Big|^2.
\end{split}
\end{equation}
This function is obviously quadratic with respect to $(\lambda, \mu)$. We say that $Q(x)[\cdot]$ is
\textit{positive definite} at a point $x$, if there exists $a > 0$ such that 
\begin{equation} \label{eq:PositiveDefinite}
  Q(x)[\lambda, \mu] \ge a (\| \lambda \|^2 + |\mu|^2)
  \quad \forall \lambda \in H, \: \mu \in \mathbb{R}^m.
\end{equation}
In this case one says that $Q(x)[\cdot]$ is positive definite with constant $a > 0$.

Let us show how the positive definiteness of the function $Q(x)[\cdot]$ is connected with a well-known constraint
qualification for optimization problems in infinite dimensional spaces. For any $x \in X$ denote 
$M(x) = \{ i \in M \mid g_i(x) = 0 \}$, and let $m(x) = |M(x)|$ be the cardinality of the set $M(x)$. For any Hilbert
spaces $Y$ and $Z$, we endow the product space $Y \times Z$ with the inner product 
\[
  \langle (y_1, z_1), (y_2, z_2) \rangle = \langle y_1, y_2 \rangle + \langle z_1, z_2 \rangle
  \quad \forall y_1, y_2 \in Y, \: z_1, z_2 \in Z
\]
and the corresponding norm.  

\begin{lemma} \label{lem:LICQvsPositiveDef}
Let $x \in X$ be fixed. The function $Q(x)[\cdot]$ is positive definite if and only if the linear operator
$\mathcal{T} \colon X \to H \times \mathbb{R}^{m(x)}$, defined as
\[
  \mathcal{T} z = \big\{ D F(x)[z] \big\} \times \prod_{i \in M(x)} \{ \langle \nabla g_i(x), z \rangle \}
  \quad \forall z \in X,
\]
is surjective.
\end{lemma}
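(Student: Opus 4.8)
The plan is to reduce the claimed equivalence to a single statement about the adjoint operator $\mathcal{T}^* \colon H \times \mathbb{R}^{m(x)} \to X$, which acts as $\mathcal{T}^*(\lambda, \nu) = DF(x)^*[\lambda] + \sum_{i \in M(x)} \nu_i \nabla g_i(x)$. Since $\mathcal{T}$ is bounded (as $DF(x)$ is bounded and only finitely many functionals $\nabla g_i(x)$ are involved) and its target $H \times \mathbb{R}^{m(x)}$ is a Hilbert space, the closed range theorem tells us that $\mathcal{T}$ is surjective if and only if $\mathcal{T}^*$ is bounded below, i.e. there is $\beta > 0$ with $\| \mathcal{T}^*(\lambda, \nu) \| \ge \beta (\| \lambda \|^2 + |\nu|^2)^{1/2}$ for all $(\lambda, \nu)$. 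On the other side, writing $v = v(\lambda, \mu) = DF(x)^*[\lambda] + \sum_{i=1}^m \mu_i \nabla g_i(x)$, we have $Q(x)[\lambda, \mu] = \frac{1}{2} \| DF(x)[v] \|^2 + \frac{1}{2} | \nabla g(x) v + \diag(g_i(x)^2) \mu |^2$, a nonnegative quadratic form in $(\lambda, \mu)$; hence its positive definiteness is equivalent to a positive lower bound on the unit sphere $\{ \| \lambda \|^2 + |\mu|^2 = 1 \}$. Thus it suffices to prove that $\mathcal{T}^*$ is bounded below if and only if $Q(x)[\cdot]$ is positive definite.

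For the implication that positive definiteness of $Q(x)[\cdot]$ forces $\mathcal{T}^*$ to be bounded below, I would take any $(\lambda, \nu) \in H \times \mathbb{R}^{m(x)}$ and extend $\nu$ to $\mu \in \mathbb{R}^m$ by setting $\mu_i = 0$ for $i \notin M(x)$. Then $\diag(g_i(x)^2) \mu = 0$ (for active $i$ one has $g_i(x) = 0$, for inactive $i$ one has $\mu_i = 0$) and $v(\lambda, \mu) = \mathcal{T}^*(\lambda, \nu)$, so that $Q(x)[\lambda, \mu] = \frac{1}{2} \| DF(x)[\mathcal{T}^*(\lambda, \nu)] \|^2 + \frac{1}{2} | \nabla g(x) \mathcal{T}^*(\lambda, \nu) |^2 \le \frac{C}{2} \| \mathcal{T}^*(\lambda, \nu) \|^2$, where $C = \| DF(x) \|^2 + \sum_{i=1}^m \| \nabla g_i(x) \|^2$. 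Combining this with $Q(x)[\lambda, \mu] \ge a (\| \lambda \|^2 + |\nu|^2)$ immediately yields $\| \mathcal{T}^*(\lambda, \nu) \|^2 \ge (2a/C)(\| \lambda \|^2 + |\nu|^2)$, as required.

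The converse is the main difficulty, and I would argue by contradiction, exploiting the finite dimensionality of $\mu$. If $Q(x)[\cdot]$ is not positive definite, choose $(\lambda_k, \mu_k)$ with $\| \lambda_k \|^2 + |\mu_k|^2 = 1$ and $Q(x)[\lambda_k, \mu_k] \to 0$; writing $v_k = v(\lambda_k, \mu_k)$, this gives $DF(x)[v_k] \to 0$ in $H$ and $\langle \nabla g_i(x), v_k \rangle + g_i(x)^2 (\mu_k)_i \to 0$ for every $i$. Passing to a subsequence with $\mu_k \to \bar\mu$, the identity $\| v_k \|^2 = \langle \lambda_k, DF(x)[v_k] \rangle + \sum_{i=1}^m (\mu_k)_i \langle \nabla g_i(x), v_k \rangle$ together with boundedness of $(\lambda_k, \mu_k)$ forces $\| v_k \|^2 \to - \sum_{i \notin M(x)} g_i(x)^2 \bar\mu_i^2 \le 0$. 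Since $\| v_k \|^2 \ge 0$ and $g_i(x)^2 > 0$ for $i \notin M(x)$, this shows $v_k \to 0$ and $(\mu_k)_i \to 0$ for all inactive $i$; consequently $\mathcal{T}^*(\lambda_k, \mu_k|_{M(x)}) = v_k - \sum_{i \notin M(x)} (\mu_k)_i \nabla g_i(x) \to 0$, and the bounded-below property of $\mathcal{T}^*$ then drives $\| \lambda_k \| \to 0$ and $(\mu_k)_i \to 0$ for active $i$ as well, contradicting $\| \lambda_k \|^2 + |\mu_k|^2 = 1$. The crux is precisely this $\| v_k \|^2$ identity, which simultaneously annihilates $v_k$ and the inactive multipliers and reduces the remaining estimate to the already-assumed coercivity of $\mathcal{T}^*$ on the active data.
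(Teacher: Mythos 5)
Your proof is correct, and its converse direction takes a genuinely different route from the paper's. For the forward implication you and the paper both pass through duality, but differently: the paper forms $\mathcal{T}\mathcal{T}^*$ and invokes \cite[Thm.~4.13]{Rudin} for self-adjoint operators after identifying $\tfrac{1}{2}\| \mathcal{T}\mathcal{T}^*(\lambda, \nu) \|^2$ with $Q(x)[\lambda, \mu]$ for the zero-extended $\mu$ --- an identification that, strictly speaking, drops the inactive terms $\langle \nabla g_i(x), v \rangle^2$, $i \notin M(x)$, so that in general only $Q(x)[\lambda, \mu] \ge \tfrac{1}{2}\| \mathcal{T}\mathcal{T}^*(\lambda, \nu) \|^2$ holds; your sandwich $a(\| \lambda \|^2 + |\nu|^2) \le Q(x)[\lambda, \mu] \le \tfrac{C}{2} \| \mathcal{T}^*(\lambda, \nu) \|^2$ keeps those extra terms on the harmless side and is the cleaner estimate (with the trivial caveat that $C = 0$ can occur only when positive definiteness forces $H \times \mathbb{R}^{m(x)} = \{0\}$, where surjectivity is vacuous). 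For the converse, the paper is operator-theoretic and constructive: it introduces the enlarged operator $\mathcal{E}(z, \xi) = (D F(x)[z], \nabla g(x) z + \diag(g_i(x)) \xi)$ on $X \times \mathbb{R}^m$, establishes $Q(x)[\lambda, \mu] = \tfrac{1}{2} \| \mathcal{E}\mathcal{E}^*(\lambda, \mu) \|^2$, transfers surjectivity from $\mathcal{T}$ to $\mathcal{E}$ by explicitly solving the inactive rows via $\xi_i = g_i(x)^{-1}( \mu_i - \langle \nabla g_i(x), z \rangle )$, and then deduces that $\mathcal{E}\mathcal{E}^*$ is surjective, hence bounded below. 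You instead run a normalized-sequence contradiction, using compactness only in the finite-dimensional $\mu$-block, hinging on the identity $\| v_k \|^2 = \langle \lambda_k, D F(x)[v_k] \rangle + \sum_i (\mu_k)_i \langle \nabla g_i(x), v_k \rangle$, which simultaneously forces $v_k \to 0$ and annihilates the inactive multipliers (since $g_i(x)^2 > 0$ off $M(x)$), after which coercivity of $\mathcal{T}^*$ finishes; this is a valid argument, correctly avoiding any compactness claim on $\lambda_k$. What each approach buys: yours is shorter and more elementary --- one application of the closed range theorem and no auxiliary operator --- but its converse half is non-quantitative, producing no explicit positive-definiteness constant from the lower bound on $\mathcal{T}^*$, whereas the paper's $\mathcal{E}$-machinery yields the quantitative equivalences of Corollary~\ref{crlr:LICQvsPositiveDef} (positive definiteness with constant $a^2/2$, invertibility of $\mathcal{E}(x)\mathcal{E}(x)^*$) that are reused later, e.g.\ in Lemma~\ref{lem:GlobalExact_OptimalValue}, Lemma~\ref{lemma:SepGradientEstimates}, and Theorem~\ref{thrm:CompleteExactness}. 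For the lemma itself, your route fully suffices.
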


\begin{proof}
To simplify the notation, without loss of generality we assume that $M(x) = \{ 1, \ldots, m_0 \}$ for some 
$m_0 \in \mathbb{N}$, $m_0 \le m$.

\textit{Part 1.}~Let $Q(x)[\cdot]$ be positive definite. Then there exists $a > 0$ such that inequality 
\eqref{eq:PositiveDefinite} holds true. Observe that 
\[
  \mathcal{T}^*(\lambda, \nu) = D F(x)^*[\lambda] + \sum_{i \in M(x)} \nu_i \nabla g_i(x)
  \quad \forall \lambda \in H, \: \nu \in \mathbb{R}^{m(x)}. 
\]
Consequently, for any $\lambda \in H$ and $\mu \in \mathbb{R}^m$ such that $\mu_i = 0$ for all $i \notin M(x)$ one has
\[
  \frac{1}{2} \big\| \mathcal{T} \mathcal{T}^*(\lambda, \nu) \big\|^2 = Q(x)[\lambda, \mu]
  \ge a (\| \lambda \|^2 + |\mu|^2),
\]
where the vector $\nu \in \mathbb{R}^{m(x)}$ is obtained from $\mu$ by deleting all those coordinates that
correspond to $i \notin M(x)$. Thus, one has
\[
  \big\| \mathcal{T} \mathcal{T}^*(\lambda, \nu) \big\| 
  \ge \sqrt{2 a} \big\| (\lambda, \nu) \big\|
  \quad \forall (\lambda, \nu) \in H \times \mathbb{R}^{m(x)}.
\]
The operator $\mathcal{T} \mathcal{T}^*$ is obviously self-adjoint. Therefore, by \cite[Thm.~4.13]{Rudin} 
the  inequality above implies that this operator is surjective. Consequently, the operator $\mathcal{T}$ is surjective
as well.

\textit{Part 2.}~Let us prove the converse statement. Suppose that the operator $\mathcal{T}$ is surjective. Define 
linear operator $\mathcal{E} \colon X \times \mathbb{R}^m \to H \times \mathbb{R}^m$ as follows:
\[
  \mathcal{E}\left(\begin{smallmatrix} z \\ \xi \end{smallmatrix}\right) = 
  \left(\begin{smallmatrix} D F(x)[z] \\ \nabla g(x) z + \diag(g_i(x)) \xi \end{smallmatrix}\right)
  \quad \forall z \in X, \: \xi \in \mathbb{R}^m.
\]
It is easily seen that
\[
  \mathcal{E}^*\left(\begin{smallmatrix} \lambda \\ \mu \end{smallmatrix}\right) = 
  \left(\begin{smallmatrix} D F(x)^*[\lambda] + \sum_{i = 1}^m \mu_i \nabla g_i(x) \\ 
  \diag(g_i(x)) \mu \end{smallmatrix}\right)
  \quad \forall \lambda \in H, \: \mu \in \mathbb{R}^m, 
\]
which implies that
\begin{equation} \label{eq:QuadTermViaSelfAdjointOper}
  Q(x)[\lambda, \mu] = \frac{1}{2} \big\| \mathcal{E} \mathcal{E}^*(\lambda, \mu) \big\|^2
  \quad \forall \lambda \in H, \: \mu \in \mathbb{R}^m.
\end{equation}
Hence taking into account \cite[Thm.~4.13]{Rudin} and the fact that the operator $\mathcal{E} \mathcal{E}^*$ is 
obviously self-adjoint, one obtains that the function $Q(x)[\cdot]$ is positive definite if and only if the operator 
$\mathcal{E} \mathcal{E}^*$ is surjective. Let us verify that this operator is surjective, provided the operator 
$\mathcal{E}$ is surjective.

Indeed, let $\mathcal{E}$ be surjective. With the use of the identity
\[
  \langle (\lambda, \mu), \mathcal{E} (z, \xi) \rangle = \langle \mathcal{E}^*(\lambda, \mu), (z, \xi) \rangle
  \quad \forall z \in X, \: \lambda \in H, \: \mu, \xi \in \mathbb{R}^m
\]
one gets that $\Ker \mathcal{E} = (\Image \mathcal{E}^*)^{\bot}$, where $\Ker \mathcal{E}$ is the kernel of 
$\mathcal{E}$ and $(\Image \mathcal{E}^*)^{\bot}$ is the orthogonal complement of the image of $\mathcal{E}^*$. 
Hence bearing in mind the fact that the image $\Image \mathcal{E}^*$ is closed by \cite[Thm.~4.13]{Rudin},
one obtains that $(\Ker \mathcal{E})^{\bot} = \Image \mathcal{E}^*$.

Fix any $(z, \xi) \in H \times \mathbb{R}^{m}$. From the fact that $\mathcal{E}$ is surjective it follows that 
there exists $y \in (\Ker \mathcal{E})^{\bot}= \Image \mathcal{E}^*$ such that $\mathcal{E} y = (z, \xi)$. 
By the definition of image, there exists $(\lambda, \mu) \in H \times \mathbb{R}^m$ such that 
$\mathcal{E}^*(\lambda, \mu) = y$, which implies that $\mathcal{E} \mathcal{E}^*(\lambda, \mu) = (z, \xi)$. 
Thus, the operator $\mathcal{E} \mathcal{E}^*$ is surjective.

To conclude the proof of the lemma, we need to check that the operator $\mathcal{E}$ is surjective, if the operator
$\mathcal{T}$ is surjective. Indeed, fix any $\lambda \in H$ and $\mu \in \mathbb{R}^m$. Our aim is to find $z \in X$ 
and $\xi \in \mathbb{R}^m$ such that $\mathcal{E}(z, \xi) = (\lambda, \mu)$, that is,
\[
  D F(x)[z] = \lambda, \quad \nabla g(x) z + \diag(g_i(x)) \xi = \mu.
\]
Denote by $\nu \in \mathbb{R}^{m(x)}$ the vector obtained from $\mu$ by removing all those coordinates that 
correspond to $i \notin M(x)$. Since the operator $\mathcal{T}$ is surjective by our assumption, one can find 
$z \in X$ such that $\mathcal{T} z = (\lambda, \nu)$, that is,
\[
  D F(x)[z] = \lambda, \quad \langle \nabla g_i(x), z \rangle = \nu_i \quad \forall i \in M(x).
\]
Define $\xi_i = 0$ for any $i \in M(x)$, and put
\[
  \xi_i = \frac{1}{g_i(x)} \Big( \mu_i - \langle \nabla g_i(x), z \rangle \Big) \quad \forall i \notin M(x).
\]
Then by definition $\mathcal{E}(z, \xi) = (\lambda, \mu)$. Thus, the operator $\mathcal{E}$ is surjective, which 
implies that the quadratic function $Q(x)[\cdot]$ is positive definite.
\end{proof}

\begin{corollary} \label{crlr:LICQvsPositiveDef}
Let for any $x \in \Omega$ linear operator $\mathcal{E}(x) \colon X \times \mathbb{R}^m \to H \times \mathbb{R}^m$ be
defined as follows:
\[
  \mathcal{E}(x)\left(\begin{smallmatrix} z \\ \xi \end{smallmatrix}\right) = 
  \left(\begin{smallmatrix} D F(x)[z] \\ \nabla g(x) z + \diag(g_i(x)) \xi \end{smallmatrix}\right)
  \quad \forall z \in X, \: \xi \in \mathbb{R}^m.
\]
For any $x \in \Omega$ the following statements are equivalent:
\begin{enumerate}
\item{the quadratic function $Q(x)[\cdot]$ is positive definite;
\label{stat:PositiveDefinite}}

\item{the operator $\mathcal{E}(x)$ is surjective;
\label{stat:Surjective}}

\item{there exists $a > 0$ such that $\| \mathcal{E}(x) \mathcal{E}(x)^*(\lambda, \mu) \| \ge a \| (\lambda, \mu) \|$
for all $\lambda \in H$ and $\mu \in \mathbb{R}^m$;
\label{stat:TopologicInject}}

\item{the operator $\mathcal{E}(x) \mathcal{E}(x)^*$ is invertible.
\label{stat:Invertible}}
\end{enumerate}
Moreover, the third statement holds true if and only if the function $Q(x)[\cdot]$ is positive definite with constant
$a^2/2$.  
\end{corollary}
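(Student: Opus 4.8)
The plan is to reduce the entire statement to the identity \eqref{eq:QuadTermViaSelfAdjointOper} already established in the proof of Lemma~\ref{lem:LICQvsPositiveDef}, namely
\[
  Q(x)[\lambda, \mu] = \frac{1}{2} \big\| \mathcal{E}(x) \mathcal{E}(x)^*(\lambda, \mu) \big\|^2
  \quad \forall \lambda \in H, \: \mu \in \mathbb{R}^m,
\]
noting that the operator $\mathcal{E}(x)$ appearing in the corollary is precisely the operator $\mathcal{E}$ built in Part~2 of that proof. This identity turns every assertion about the quadratic form $Q(x)[\cdot]$ into an assertion about the self-adjoint operator $\mathcal{E}(x) \mathcal{E}(x)^*$, after which the four statements reduce to standard mutually equivalent descriptions of such an operator.

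First I would settle statement~\ref{stat:PositiveDefinite}~$\Leftrightarrow$~statement~\ref{stat:TopologicInject} together with the bookkeeping of constants. Positive definiteness of $Q(x)[\cdot]$ with constant $a_0$ reads $Q(x)[\lambda, \mu] \ge a_0 (\| \lambda \|^2 + |\mu|^2) = a_0 \|(\lambda, \mu)\|^2$, which by the identity above is exactly $\|\mathcal{E}(x) \mathcal{E}(x)^*(\lambda, \mu)\|^2 \ge 2 a_0 \|(\lambda, \mu)\|^2$, i.e. $\|\mathcal{E}(x) \mathcal{E}(x)^*(\lambda, \mu)\| \ge \sqrt{2 a_0}\, \|(\lambda, \mu)\|$. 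Setting $a = \sqrt{2 a_0}$, equivalently $a_0 = a^2/2$, yields both the equivalence and the final ``moreover'' claim in a single step.

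Next I would treat statement~\ref{stat:TopologicInject}~$\Leftrightarrow$~statement~\ref{stat:Invertible}. Since $\mathcal{E}(x) \mathcal{E}(x)^*$ is self-adjoint, the lower bound in statement~\ref{stat:TopologicInject} makes it bounded below, hence injective with closed range; self-adjointness then gives $(\Image \mathcal{E}(x) \mathcal{E}(x)^*)^{\bot} = \Ker \mathcal{E}(x) \mathcal{E}(x)^* = \{ 0 \}$, so the range is dense, and being also closed it equals all of $H \times \mathbb{R}^m$, which gives invertibility. The converse is immediate, as an invertible operator is bounded below. Exactly as in the lemma, this step may simply be quoted from \cite[Thm.~4.13]{Rudin}.

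Finally I would close the loop through statement~\ref{stat:Surjective}. For the passage from surjectivity of $\mathcal{E}(x)$ to invertibility of $\mathcal{E}(x) \mathcal{E}(x)^*$, I would argue as in Part~2 of the lemma's proof: surjectivity of $\mathcal{E}(x)$ gives $\Ker \mathcal{E}(x)^* = (\Image \mathcal{E}(x))^{\bot} = \{ 0 \}$, whence $\mathcal{E}(x) \mathcal{E}(x)^*$ is both injective and surjective, hence invertible; conversely, invertibility of $\mathcal{E}(x) \mathcal{E}(x)^*$ forces $\Image \mathcal{E}(x) \supseteq \Image (\mathcal{E}(x) \mathcal{E}(x)^*) = H \times \mathbb{R}^m$, so $\mathcal{E}(x)$ is surjective. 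Since essentially all the functional-analytic content is already carried by the identity \eqref{eq:QuadTermViaSelfAdjointOper} and by \cite[Thm.~4.13]{Rudin}, I do not expect a genuine obstacle here; the only point demanding care is the precise tracking of constants in the first step, so that the ``moreover'' assertion emerges with the factor $a^2/2$ rather than some other multiple.
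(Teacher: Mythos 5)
Your overall architecture is sound and in fact streamlines the paper's proof: the paper establishes the implication from statement 1 to statement 2 by routing through Lemma~3.2 and the operator $\mathcal{T}$, whereas you close the cycle $1 \Leftrightarrow 3 \Leftrightarrow 4 \Leftrightarrow 2$ using only the identity $Q(x)[\lambda,\mu] = \tfrac{1}{2}\|\mathcal{E}(x)\mathcal{E}(x)^*(\lambda,\mu)\|^2$ and elementary facts about self-adjoint operators, never needing $\mathcal{T}$ at all. Your first step ($1 \Leftrightarrow 3$ with explicit constants) is correct and disposes of the ``moreover'' claim exactly as the paper does; your $3 \Leftrightarrow 4$ argument (bounded below $\Rightarrow$ injective with closed range, self-adjointness $\Rightarrow$ dense range, converse via boundedness of the inverse) is also correct.

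The genuine gap is in your passage from statement 2 to statement 4. You write that surjectivity of $\mathcal{E}(x)$ gives $\Ker \mathcal{E}(x)^* = (\Image \mathcal{E}(x))^{\bot} = \{0\}$, ``whence $\mathcal{E}(x)\mathcal{E}(x)^*$ is both injective and surjective.'' Injectivity does follow (if $\mathcal{E}(x)\mathcal{E}(x)^* u = 0$ then $\|\mathcal{E}(x)^* u\|^2 = \langle \mathcal{E}(x)\mathcal{E}(x)^* u, u\rangle = 0$), but surjectivity does not: the condition $\Ker \mathcal{E}(x)^* = \{0\}$ is equivalent only to \emph{density} of $\Image \mathcal{E}(x)$, and an injective self-adjoint operator on an infinite-dimensional space need not be onto --- take a compact injective positive operator $T$; then $T T^* = T^2$ is injective with dense but non-closed range. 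The jump ``injective $\Rightarrow$ surjective'' is precisely the finite-dimensional shortcut that fails when $H$ is infinite-dimensional, and this step is the one place where the corollary carries real analytic content. Your deferral to Part~2 of the lemma's proof would rescue you, but your summary misstates what that part does: the paper uses \cite[Thm.~4.13]{Rudin} to obtain closedness of $\Image \mathcal{E}^*$, identifies $(\Ker \mathcal{E})^{\bot} = \Image \mathcal{E}^*$, and for each target $(z,\xi)$ takes the solution $y$ of $\mathcal{E} y = (z,\xi)$ lying in $(\Ker \mathcal{E})^{\bot}$, so that $y = \mathcal{E}^*(\lambda,\mu)$ furnishes a preimage under $\mathcal{E}\mathcal{E}^*$. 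A shorter repair consistent with your scheme: by \cite[Thm.~4.13]{Rudin}, surjectivity of $\mathcal{E}(x)$ is equivalent to a lower bound $\|\mathcal{E}(x)^* u\| \ge c\|u\|$ for some $c > 0$; then the Cauchy--Bunyakovsky--Schwarz inequality gives $\|\mathcal{E}(x)\mathcal{E}(x)^* u\|\,\|u\| \ge \|\mathcal{E}(x)^* u\|^2 \ge c^2\|u\|^2$, which is statement 3 directly, and the cycle closes with no separate surjectivity argument for $\mathcal{E}(x)\mathcal{E}(x)^*$.
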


\begin{proof}
$\ref{stat:PositiveDefinite} \implies \ref{stat:Surjective}$. If the quadratic function $Q(x)[\cdot]$ is positive 
definite, then by the previous lemma the corresponding operator $\mathcal{T}$ is surjective. Hence arguing in the same
way as in the proof of Lemma~\ref{lem:LICQvsPositiveDef} one obtains that the operator $\mathcal{E}(x)$ is surjective.

$\ref{stat:Surjective} \implies \ref{stat:TopologicInject}$. If the operator $\mathcal{E}(x)$ is surjective, then, 
as was shown in the proof of Lemma~\ref{lem:LICQvsPositiveDef}, the operator $\mathcal{E}(x) \mathcal{E}(x)^*$ is 
surjective as well. Therefore by \cite[Thm.~4.13]{Rudin} the third statement of the corollary holds true.

The implication $\ref{stat:TopologicInject} \implies \ref{stat:Invertible}$ follows directly from
\cite[Thm.~4.13]{Rudin} and the fact that the operator $\mathcal{E}(x) \mathcal{E}(x)^*$ is obviously self-adjoint.

Finally, from the obvious implication $\ref{stat:Invertible} \implies \ref{stat:TopologicInject}$ and equality
\eqref{eq:QuadTermViaSelfAdjointOper} it follows that the last statement of the corollary implies the first one.

It remains to note that the third statement is satisfied if and and only if $Q(x)[\cdot]$ is positive definite with
constant
$a^2 / 2$ due to equality \eqref{eq:QuadTermViaSelfAdjointOper}.
\end{proof}

\begin{remark}
{(i)~Recall that the surjectivity of the Fr\'{e}chet derivative of a nonlinear operator is a central assumption of 
the Lusternik-Graves theorem (see, e.g. \cite{BorweinDontchev}), which by this theorem is equivalent to the metric
regularity of the corresponding operator. In particular, in the context of Lemma~\ref{lem:LICQvsPositiveDef}, the
surjectivity assumption is equivalent to the metric regularity of the mapping 
$W_x(\cdot) = \{ F(\cdot) \} \times \prod_{i \in M(x)} \{ g_i(\cdot) \}$ near the point $x$. Thus, by
Lemma~\ref{lem:LICQvsPositiveDef} the quadratic function $Q(x)[\cdot]$ is positive definite if and only if the mapping
$W_x(\cdot)$ is metrically regular near $x$.
}

\noindent{(ii)~Suppose that the space $H$ is finite dimensional and the constraint $F(x) = 0$ is rewritten as a finite
number of equality constraints $f_j(x) = 0$ for some functions $f_j \colon X \to \mathbb{R}$, 
$j \in \{ 1, \ldots, \ell \}$. Then, as one can readily verify, the operator $\mathcal{T}$ from
Lemma~\ref{lem:LICQvsPositiveDef} is surjective if and only if the gradients $\nabla f_j(x)$, 
$j \in \{ 1, \ldots, \ell \}$, and $\nabla g_i(x)$, $i \in M(x)$, are linearly independent, i.e. LICQ holds true at $x$.
Thus, the positive definiteness of the quadratic function $Q(x)[\cdot]$ is equivalent to the validity of LICQ. Below, we
will use the assumption that the function $Q(x)[\cdot]$ is uniformly positive definite on certain sets, that is, there
exists $a > 0$ such that $Q(x)[\lambda, \mu] \ge a (\| \lambda \|^2 + |\mu|^2)$ for all 
$(\lambda, \mu) \in H \times \mathbb{R}^m$ and for any $x$ from a given set. In the light of this remark, one can
interpret this assumption as nonlocal LICQ or as an assumption on nonlocal metric regularity of the constraints of 
the problem $(\mathcal{P})$. Let us also note that nonlocal CQ and nonlocal metric regularity play a central
role in the theory of exact penalty functions in the infinite dimensional case
\cite{Demyanov,Zaslavski,Zaslavski2009,Zaslavski2013,Dolgopolik_ExPenFunctions,DolgopolikFominyh,Dolgopolik2020}.
}
\end{remark}

For any $\gamma \in \mathbb{R}$ and $c > 0$ introduce the set
\[
  \Omega_c(\gamma) := \Big\{ x \in \Omega \Bigm| 
  f(x) + c \big( \| F(x) \|^2 + |\max\{ g(x), 0 \}|^2 \big) \le \gamma \Big\}.
\]
We are finally ready to obtain sufficient conditions for the boundedness of the sublevel set $S_c(\gamma)$ defined in
\eqref{def:SublevelSet}.

\begin{theorem} \label{thrm:SublevelBoundedness}
Let $\gamma \in \mathbb{R}$ be fixed and the following assumptions be valid:
\begin{enumerate} 
\item{there exist $\phi_0 > 0$ such that $\phi(t) \ge \phi_0 t$ for all $t \ge 0$;}

\item{the set $\Omega_r(\gamma + \varepsilon)$ is bounded for some $r > 0$ and $\varepsilon > 0$;
\label{assumpt:PenaltySublevelBounded}} 

\item{$f$ is bounded below on the set $\Omega_r(\gamma + \varepsilon)$, and $g$ is bounded on this set;}

\item{the gradients $\nabla f(x)$, $\nabla g_i(x)$, $i \in M$, and the Fr\'{e}chet derivative $D F(x)$ are bounded on
the set $\Omega_r(\gamma + \varepsilon)$;}

\item{there exists $a > 0$ such that for all $x \in \Omega_r(\gamma + \varepsilon)$ one has
\begin{equation} \label{eq:NonlocalCQ}
  Q(x)[\lambda, \mu] \ge a \big( \| \lambda \|^2 + |\mu|^2 \big)
  \quad \forall \lambda \in H, \: \mu \in \mathbb{R}^m.
\end{equation}
\label{assumpt:NonlocalCQ}}

\vspace{-5mm}
\end{enumerate}
Then there exists $c_0 > 0$ such that for all $c \ge c_0$ the sublevel set $S_c(\gamma)$ is bounded.
\end{theorem}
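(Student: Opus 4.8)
The plan is to bound $S_c(\gamma)$ in two stages, first the primal variable $x$ and then the multipliers $(\lambda,\mu)$, choosing $c_0$ large enough that both bounds become effective simultaneously. The core mechanism is that the lower estimate of Lemma~\ref{lem:LowerEstimate} already contains the nonnegative penalty terms plus $\eta$, so it suffices to show the penalty terms confine $x$ and that $\eta$ inherits a coercivity estimate in $(\lambda,\mu)$ from the positive definiteness of $Q(x)[\cdot]$.

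First I would control $x$. Since $\eta(x,\lambda,\mu)\ge 0$ and $\phi(\|F(x)\|^2)\ge\phi_0\|F(x)\|^2$, every $(x,\lambda,\mu)\in S_c(\gamma)$ with $x\in\Omega$ satisfies, by Lemma~\ref{lem:LowerEstimate},
\[
  f(x)+\frac{c\phi_0}{2}\|F(x)\|^2+\frac{c}{2\psi(0)}\big|\max\{g(x),0\}\big|^2
  \le \gamma+\frac{1}{2c\phi_0}+\frac{(1+m)\psi(0)}{2c}.
\]
Writing $\kappa=\min\{\phi_0/2,\,1/(2\psi(0))\}>0$ and choosing $c$ so large that $c\kappa\ge r$ and that the $O(1/c)$ remainder on the right is at most $\varepsilon$, I would deduce $f(x)+r(\|F(x)\|^2+|\max\{g(x),0\}|^2)\le\gamma+\varepsilon$, i.e.\ $x\in\Omega_r(\gamma+\varepsilon)$. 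By Assumption~\ref{assumpt:PenaltySublevelBounded} this set is bounded, so the $x$-component of $S_c(\gamma)$ is bounded (points with $x\notin\Omega$ are irrelevant, since there $\mathscr{L}=+\infty>\gamma$).

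The key step is to transfer the positive definiteness of $Q(x)[\cdot]$ to a coercivity estimate for $\eta$. Setting $w=DF(x)^*[\lambda]+\sum_{i}\mu_i\nabla g_i(x)$, so that $\nabla_x L(x,\lambda,\mu)=\nabla f(x)+w$, one checks directly that
\[
  \eta(x,\lambda,\mu)=\frac12\big\|DF(x)[w+\nabla f(x)]\big\|^2
  +\frac12\big|\nabla g(x)(w+\nabla f(x))+\diag(g_i(x)^2)\mu\big|^2,
\]
which is exactly $Q(x)[\lambda,\mu]$ with $w$ replaced by $w+\nabla f(x)$. Applying the elementary bound $\|P+R\|^2\ge\frac12\|P\|^2-\|R\|^2$ to each squared term gives
\[
  \eta(x,\lambda,\mu)\ge\frac12 Q(x)[\lambda,\mu]
  -\frac12\Big(\big\|DF(x)[\nabla f(x)]\big\|^2+\big|\nabla g(x)\nabla f(x)\big|^2\Big).
\]
By the boundedness assumption on $\nabla f$, $\nabla g_i$, and $DF$, the subtracted term is bounded above uniformly on $\Omega_r(\gamma+\varepsilon)$ by some constant $C_0$, and by Assumption~\ref{assumpt:NonlocalCQ} one has $Q(x)[\lambda,\mu]\ge a(\|\lambda\|^2+|\mu|^2)$ there; hence $\eta\ge\frac{a}{2}(\|\lambda\|^2+|\mu|^2)-\frac{C_0}{2}$.

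Finally I would bound $(\lambda,\mu)$. Inserting this coercivity estimate into Lemma~\ref{lem:LowerEstimate}, discarding the nonnegative terms $\frac{c}{2}\phi(\|F(x)\|^2)$ and $\frac{c}{2\psi(0)}|\max\{g(x),0\}|^2$, and using a lower bound $f\ge -C_f$ on $\Omega_r(\gamma+\varepsilon)$, any $(x,\lambda,\mu)\in S_c(\gamma)$ yields
\[
  \frac{a}{2}\big(\|\lambda\|^2+|\mu|^2\big)\le \gamma+C_f+\frac{C_0}{2}+\frac{1}{2c\phi_0}+\frac{(1+m)\psi(0)}{2c},
\]
whose right-hand side is finite for all $c\ge c_0$, so $(\lambda,\mu)$ is bounded; combined with the bound on $x$ this proves $S_c(\gamma)$ is bounded. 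I expect the only genuine obstacle to be the middle step: verifying the algebraic identity recasting $\eta$ as a $\nabla f$-perturbation of $Q$ and ensuring that perturbation is controlled uniformly, so that positive definiteness of $Q$ carries over. Everything else is constant bookkeeping plus the single choice $c_0=\max\{r/\kappa,\,c_1\}$ absorbing the $O(1/c)$ remainders into $\varepsilon$.
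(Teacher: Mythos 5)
Your proof is correct and follows essentially the same two-stage route as the paper's: Lemma~\ref{lem:LowerEstimate} confines $x$ to $\Omega_r(\gamma + \varepsilon)$ once $c$ is large, and the positive definiteness of $Q(x)[\cdot]$ makes $\eta$ coercive in $(\lambda, \mu)$ up to terms bounded on that set. The only differences are in execution: the paper expands $\eta = Q(x)[\lambda,\mu] + \langle Q_{1,\lambda}(x), \lambda\rangle + \langle Q_{1,\mu}(x), \mu\rangle + Q_0(x)$ and runs a reductio with a sequence $(x_n, \lambda_n, \mu_n) \in S_n(\gamma)$, whereas your shift-by-$\nabla f$ identity combined with $\|P + R\|^2 \ge \tfrac{1}{2}\|P\|^2 - \|R\|^2$ yields the direct estimate $\eta \ge \tfrac{a}{2}(\|\lambda\|^2 + |\mu|^2) - \tfrac{C_0}{2}$ and an explicit bound on the multipliers for every $c \ge c_0$ (incidentally, since $\diag(g_i(x)^2)\mu$ stays inside the $Q$-part, your version does not even use the boundedness of $g$).
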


\begin{proof}
Note that the function $\mathscr{L}(x, \lambda, \mu, c)$ is non-decreasing in $c$. Therefore, 
$S_{c_1}(\gamma) \subseteq S_{c_2}(\gamma)$ for all $c_1 \ge c_2 > 0$, and it is sufficient to prove that the sublevel
set $S_c(\gamma)$ is bounded only for some $c > 0$.

From Lemma~\ref{lem:LowerEstimate} it follows that for any $(x, \lambda, \mu) \in S_c(\gamma)$ one has
\[
  f(x) + \frac{c \phi_0}{2} \| F(x) \|^2 + \frac{c}{2\psi(0)} \big| \max\{ g(x), 0 \} \big|^2
  \le \gamma + \frac{1}{2 c \phi_0} + \frac{( 1 + m)\psi(0)}{2c}
\] 
Consequently, for any 
\[
  c \ge \widehat{c} := \max\left\{ \frac{2r}{\phi_0}, 2 \psi(0) r, 
  \frac{1}{\varepsilon} \left( \frac{1}{2 \phi_0} + \frac{(1 + m)\psi(0)}{2} \right)  \right\}
\] 
and for all $(x, \lambda, \mu) \in S_c(\gamma)$ one has $x \in \Omega_r(\gamma + \varepsilon)$.

Arguing by reductio ad absurdum, suppose that for any $c > 0$ the set $S_c(\gamma)$ is unbounded. Then for any 
$n \in \mathbb{N}$ there exists $(x_n, \lambda_n, \mu_n) \in S_n(\gamma)$ such that 
$\| x_n \| + \| \lambda_n \| + |\mu_n| \ge n$. As was noted above, for any $n \ge \widehat{c}$
one has $x_n \in \Omega_r(\gamma + \varepsilon)$. Therefore by Assumption~\ref{assumpt:PenaltySublevelBounded} 
the sequence $\{ x_n \}$ is bounded, which implies that $\| \lambda_n \| + |\mu_n| \to + \infty$ as $n \to \infty$.
Moreover,
by Assumption~\ref{assumpt:NonlocalCQ} one has
\begin{equation} \label{eq:LimitingLowerEstimate}
  Q(x_n)[\lambda, \mu] \ge a \big( \| \lambda \|^2 + |\mu|^2 \big)
  \quad \forall \lambda \in H, \: \mu \in \mathbb{R}^m
\end{equation}
for all $n \ge \widehat{c}$

From the definition of the function $\eta$ (see~\eqref{eq:EtaDef}) it follows that this function is quadratic with
respect to $(\lambda, \mu)$ and has the form
\[
  \eta(x, \lambda, \mu) = Q(x)[\lambda, \mu] 
  + \langle Q_{1, \lambda}(x), \lambda \rangle + \langle Q_{1, \mu}(x), \mu \rangle + Q_0(x),
\]
where 
\[
  Q_{1, \lambda}(x) = D F(x) \Big[ D F(x)^* \Big( D F(x)[\nabla f(x)] \Big) \Big]
  + \sum_{i = 1}^m \langle \nabla g_i(x), \nabla f(x) \rangle D F(x)[\nabla g_i(x)],
\]
and $Q_{1, \mu}(x) = (Q_{1, \mu}(x)_1, \ldots, Q_{1, \mu}(x)_m) \in \mathbb{R}^m$ with 
\begin{align*}
  Q_{1, \mu}(x)_i &= \langle D F(x)[\nabla f(x)], D F(x)[\nabla g_i(x)] \rangle 
  \\
  &+ \sum_{j = 1}^m \langle \nabla g_j(x), \nabla f(x) \rangle \langle \nabla g_j(x), \nabla g_i(x) \rangle
  + \langle \nabla g_i(x), \nabla f(x) \rangle g_i(x)^2,
\end{align*}
and
\[
  Q_0(x) = \frac{1}{2} \big\| D F(x)[\nabla f(x)] \big\|^2 
  + \frac{1}{2} \sum_{i = 1}^m \langle \nabla g_i(x), \nabla f(x) \rangle^2.
\] 
From our assumption on the boundedness of the function $g$ and all first order derivatives it follows that there exists
$C > 0$ such that
\[
  \| Q_{1, \lambda}(x_n) \| \le C, \quad |Q_{1, \mu}(x_n)| \le C, \quad Q_0(x_n) \le C \quad \forall n \ge \widehat{c}.
\]
Hence with the use of inequality \eqref{eq:LimitingLowerEstimate} one obtains that
\[
  \eta(x_n, \lambda_n, \mu_n) \ge a \big( \| \lambda_n \|^2 + |\mu_n|^2 \big)
  - C \big( \| \lambda_n \| + |\mu_n| \big) - C \quad \forall n \ge \widehat{c}.
\]
Consequently, applying Lemma~\ref{lem:LowerEstimate} one gets that
\begin{align*}
  \mathscr{L}(x_n, \lambda_n, \mu_n, n) &\ge f(x_n) + \frac{n}{2} \phi(\| F(x_n) \|^2)
  \\
  &+ \frac{n}{2\psi(0)} \left| \max\left\{ g(x_n), 0 \right\} \right|^2
  - \frac{1}{2 n \phi_0} - \frac{1}{2} \left( 1 + m \right) \frac{\psi(0)}{n} 
  \\
  &+ a \big( \| \lambda_n \|^2 + |\mu_n|^2 \big)  - C \big( \| \lambda_n \| + |\mu_n| \big) - C
\end{align*}
for all $n \ge \widehat{c}$. Therefore $\mathscr{L}(x_n, \lambda_n, \mu_n, n) \to + \infty$ as $n \to \infty$, since by
our assumptions $\| \lambda_n \| + |\mu_n| \to + \infty$ as $n \to \infty$, the function $f$ is bounded below on 
the set $\Omega_r(\gamma + \varepsilon)$, and $x_n \in \Omega_r(\gamma + \varepsilon)$ for all $n \ge \widehat{c}$. 
On the other hand, by definition $\mathscr{L}(x_n, \lambda_n, \mu_n, n) \le \gamma$ for all $n \in \mathbb{N}$, which
leads to the obvious contradiction. Thus, there exists $c > 0$ such that the set $S_c(\gamma)$ is bounded, and the proof
is complete.
\end{proof}

\begin{corollary}
Let the following assumptions be valid:
\begin{enumerate} 
\item{there exist $\phi_0 > 0$ such that $\phi(t) \ge \phi_0 t$ for all $t \ge 0$;}

\item{$f$ is bounded below and $g$ is bounded on bounded subsets of $\Omega$;}

\item{the gradients $\nabla f(x)$, $\nabla g_i(x)$, $i \in M$, and the Fr\'{e}chet derivative $D F(x)$ are
bounded on bounded subsets of $\Omega$;}

\item{either the penalty function $\Psi_c(x) = f(x) + c( \| F(x) \|^2 + |\max\{ g(x), 0 \}|^2 )$ is coercive on $\Omega$
for some $c > 0$ or the set $\Omega$ is bounded;
\label{assumpt:PenaltyFuncCoercivity}} 

\item{for any bounded set $V \subset \Omega$ there exists $a > 0$ such that for all $x \in V$ one has
\[
  Q(x)[\lambda, \mu] \ge a \big( \| \lambda \|^2 + |\mu|^2 \big)
  \quad \forall \lambda \in H, \: \mu \in \mathbb{R}^m.
\]
}

\vspace{-5mm}
\end{enumerate}
Then for any $\gamma \in \mathbb{R}$ there exists $c(\gamma) > 0$ such that for all $c \ge c(\gamma)$ the sublevel set 
$S_c(\gamma)$ is bounded.
\end{corollary}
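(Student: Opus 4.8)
The plan is to deduce this corollary directly from Theorem~\ref{thrm:SublevelBoundedness} by showing that, for any fixed $\gamma \in \mathbb{R}$, one can choose parameters $r > 0$ and $\varepsilon > 0$ for which all five assumptions of that theorem are satisfied; the conclusion of the theorem then yields the desired $c(\gamma) := c_0$. Assumption~1 of the theorem coincides verbatim with the first hypothesis here, so nothing needs checking there. The real work is to produce a bounded penalty sublevel set $\Omega_r(\gamma + \varepsilon)$, since once its boundedness is established, the remaining assumptions of the theorem become immediate consequences of the corollary's hypotheses on $f$, $g$, the first-order derivatives, and $Q(x)$, all of which are imposed on bounded subsets of $\Omega$.

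First I would fix $\varepsilon = 1$ (any positive value works) and settle Assumption~\ref{assumpt:PenaltySublevelBounded} of the theorem by splitting into the two alternatives of hypothesis~\ref{assumpt:PenaltyFuncCoercivity}. If $\Omega$ is bounded, then $\Omega_r(\gamma + 1) \subseteq \Omega$ is automatically bounded for every $r > 0$, so any $r > 0$ suffices. If instead $\Psi_c$ is coercive on $\Omega$ for some $c > 0$, I would use the elementary monotonicity $\Psi_r(x) - \Psi_c(x) = (r - c)\big( \| F(x) \|^2 + |\max\{ g(x), 0 \}|^2 \big) \ge 0$ valid for $r \ge c$, which gives $\Psi_r(x) \ge \Psi_c(x)$ on $\Omega$ and hence
\[
  \Omega_r(\gamma + 1) = \{ x \in \Omega \mid \Psi_r(x) \le \gamma + 1 \}
  \subseteq \{ x \in \Omega \mid \Psi_c(x) \le \gamma + 1 \}.
\]
Since $\Psi_c$ is coercive, the right-hand set is bounded, so $\Omega_r(\gamma + 1)$ is bounded for every $r \ge c$. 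In either case I fix such an $r$; this disposes of Assumption~\ref{assumpt:PenaltySublevelBounded}.

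With $V := \Omega_r(\gamma + 1)$ now a bounded subset of $\Omega$, the remaining assumptions of Theorem~\ref{thrm:SublevelBoundedness} follow by restricting the corollary's hypotheses to this particular set: the second hypothesis gives that $f$ is bounded below and $g$ is bounded on $V$; the third gives boundedness of $\nabla f$, the $\nabla g_i$, and $DF$ on $V$; and the fifth, applied to this bounded set $V$, furnishes a constant $a > 0$ for which the uniform positive definiteness \eqref{eq:NonlocalCQ} holds on $V$. Thus all five assumptions of the theorem hold for the chosen $r$ and $\varepsilon = 1$, and the theorem produces $c_0 > 0$ such that $S_c(\gamma)$ is bounded for all $c \ge c_0$; setting $c(\gamma) := c_0$ completes the argument.

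I do not expect a genuine obstacle here, as the corollary is essentially a \emph{uniform-to-local} repackaging of the theorem, with its global hypotheses on bounded subsets of $\Omega$ tailored precisely to the data the theorem needs on $\Omega_r(\gamma + \varepsilon)$. The only point requiring any thought is the coercivity alternative, and there the one thing I would double-check is the intended reading of \textbf{coercive}: namely that the sublevel sets of $\Psi_c$ over $\Omega$ are bounded (equivalently $\Psi_c(x) \to +\infty$ as $\| x \| \to \infty$ within $\Omega$). Granting this standard reading, the monotonicity step above is exactly what transfers coercivity at the single parameter $c$ into boundedness of $\Omega_r(\gamma + 1)$ for all sufficiently large $r$.
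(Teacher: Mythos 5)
Your proof is correct and takes essentially the same route as the paper: the paper's proof likewise fixes $\gamma$, observes that the fourth hypothesis makes the penalty sublevel set $\Omega_c(\gamma + \varepsilon)$ bounded for any $\varepsilon > 0$, and then invokes Theorem~\ref{thrm:SublevelBoundedness}, whose remaining assumptions follow by restricting the corollary's hypotheses to that bounded set. Your monotonicity step ($\Psi_r \ge \Psi_c$ for $r \ge c$) and explicit handling of the bounded-$\Omega$ alternative merely spell out details the paper leaves implicit (one can simply take $r = c$), so there is no substantive difference.
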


\begin{proof}
Fix any $\gamma \in \mathbb{R}$. From Assumption~\ref{assumpt:PenaltyFuncCoercivity} it follows that for any
$\varepsilon >0$
the set 
\[
  \Omega_c(\gamma + \varepsilon) = \Big\{ x \in \Omega \Bigm| \Psi_c(x) \le \gamma + \varepsilon \Big\}
\]
is bounded. Hence applying Theorem~\ref{thrm:SublevelBoundedness}, we arrive at the required result.
\end{proof}

\section{Properties of the Gradient of $\mathscr{L}(x, \lambda, \mu, c)$}
\label{sect:GradientProperties}

In this section, we prove an auxiliary result, describing an important property of the gradient of the augmented
Lagrangian $\mathscr{L}(x, \lambda, \mu, c)$. Namely, our aim is to show that the norm of the gradient of the function 
$(x, \lambda, \mu) \mapsto \mathscr{L}(x, \lambda, \mu, c)$, denoted by $\nabla \mathscr{L}(x, \lambda, \mu, c)$,
can be estimated from below via the infeasibility measure
\[
  \| F(x) \| + \left|\max\left\{ g(x), - \frac{p(x, \mu)}{c} \mu \right\}\right|,
\]
provided the nonlocal constraint qualification from Theorem~\ref{thrm:SublevelBoundedness} holds true. With the use of 
such estimate one can readily verify that critical points of the augmented Lagrangian are, in fact, KKT-points of
the original problem. In other words, this estimate is instrumental in the proof of the exactness of 
$\mathscr{L}(x, \lambda, \mu, c)$. Moreover, it plays an important role in the design and analysis of numerical methods
based on the use of this augmented Lagrangian (cf. \cite{DiPilloLucidi2001}).

\begin{theorem} \label{thrm:GradientEstimates}
Let the following assumptions be valid:
\begin{enumerate}
\item{$f$, $g_i$, $i \in M$, and $F$ are twice continuously Fr\'{e}chet differentiable on $\Omega$,
$\phi$ is continuously differentiable on its effective domain, and $\phi'(0) > 0$;
}

\item{for some bounded set $V \subseteq \Omega$ there exists $a > 0$ such that for all $x \in V$ one has
\[
  Q[x](\lambda, \mu) \ge a \big( \| \lambda \|^2 + |\mu| \big)^2 \quad \forall \lambda \in H, \: \mu \in \mathbb{R}^m.
\]
}

\vspace{-5mm}

\item{the functions $f$, $g_i$, $i \in M$, and $F$, as well as their first and second order Fr\'{e}chet derivatives, are
bounded on $V$.
}
\end{enumerate}
Then for all $K > 0$ and $\gamma \in \mathbb{R}$, and any bounded set $\Lambda \subset H \times \mathbb{R}^m$ there
exists $c_* > 0$ such that for all $c \ge c_*$ and $(x, \lambda, \mu) \in (V \times \Lambda) \cap S_c(\gamma)$ the
following inequality holds true:
\begin{equation} \label{eq:GradEstim_InfesasibilityMes}
  \big\| \nabla \mathscr{L}(x, \lambda, \mu, c) \big\| 
  \ge K \left( \|F(x)\| + \left|\max\left\{ g(x), - \frac{p(x, \mu)}{c} \mu \right\}\right| \right).
\end{equation}
In particular, if the assumptions of the theorem are satisfied for $V = \Omega_r(\gamma + \varepsilon)$ with some
$r > 0$, $\varepsilon > 0$, and $\gamma \in \mathbb{R}$, then for all $K > 0$ there exists $c_* > 0$ such that 
inequality \eqref{eq:GradEstim_InfesasibilityMes} holds true for all $c \ge c_*$ and 
$(x, \lambda, \mu) \in S_c(\gamma)$.
\end{theorem}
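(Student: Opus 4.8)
The plan is to argue by contradiction and to draw the required lower bound essentially from the primal gradient $\nabla_x \mathscr{L}$, using the dual components $\nabla_\lambda \mathscr{L}$, $\nabla_\mu \mathscr{L}$ only to control the term $\eta$. Suppose the first assertion fails for some $K$, $\gamma$ and bounded $\Lambda$; then there are $c_n \to \infty$ and $(x_n, \lambda_n, \mu_n) \in (V \times \Lambda) \cap S_{c_n}(\gamma)$ with $\| \nabla \mathscr{L}(x_n, \lambda_n, \mu_n, c_n) \| < K I_n$, where $I_n$ is the infeasibility measure on the right-hand side of \eqref{eq:GradEstim_InfesasibilityMes}. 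Arguing as in the proof of Theorem~\ref{thrm:SublevelBoundedness} (via Lemma~\ref{lem:LowerEstimate} and $\eta \ge 0$), membership in $S_{c_n}(\gamma)$ together with the lower boundedness of $f$ on $V$ forces $\| F(x_n) \|^2$ and $|\max\{ g(x_n), 0\}|^2$ to be $O(1/c_n)$; hence $I_n \to 0$ and $\| \nabla \mathscr{L}(x_n, \lambda_n, \mu_n, c_n) \| \to 0$.

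First I would record two structural facts read off from Proposition~\ref{prp:Derivatives}. Writing $\zeta = (F(x), \max\{ g(x), -\frac1c p(x,\mu)\mu \}) \in H \times \mathbb{R}^m$ for the infeasibility vector and $w = (DF(x)[\nabla_x L], \nabla g(x)\nabla_x L + \diag(g_i(x)^2)\mu) \in H \times \mathbb{R}^m$, so that $\| w \|^2 = 2 \eta(x,\lambda,\mu)$, a direct computation shows that the collection of second-order derivative terms in $(\nabla_\lambda \mathscr{L}, \nabla_\mu \mathscr{L})$ equals $\mathcal{E}(x)\mathcal{E}(x)^* w$, so that
\[
  (\nabla_\lambda \mathscr{L}, \nabla_\mu \mathscr{L}) = \zeta + \rho + \mathcal{E}(x)\mathcal{E}(x)^* w, \qquad
  \rho := \Big( c \phi(\| F(x) \|^2)\lambda, \ \tfrac{c}{b(x)} \big| \max\{ g(x), -\tfrac1c p(x,\mu)\mu \} \big|^2 \mu \Big).
\]
Likewise the terms of $\nabla_x \mathscr{L}$ carrying a factor $c$ can be written as $DF(x)^*[\alpha] + \sum_i \beta_i \nabla g_i(x)$ with $\alpha = c(1 + \| \lambda \|^2)\phi'(\| F(x) \|^2) F(x)$ and $\beta_i = \frac{c}{p(x,\mu)} \max\{ g_i(x), -\frac1c p(x,\mu)\mu_i \}$, the remaining terms being $\nabla_x L$, the (bounded) $\psi$-term, and $\nabla_x \eta = (D_x w)^* w$. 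By Corollary~\ref{crlr:LICQvsPositiveDef} the uniform positive definiteness of $Q(x)[\cdot]$ on $V$ is equivalent to a uniform bound $\| \mathcal{E}(x)^*(\cdot) \| \ge \delta \| \cdot \|$ with $\delta = \sqrt{2a}/\sup_{x \in V}\| \mathcal{E}(x) \| > 0$, and to $\| \mathcal{E}(x)\mathcal{E}(x)^*(\cdot) \| \ge \sqrt{2a}\| \cdot \|$. Since $c\phi(\| F(x) \|^2)$ and $\frac{c}{b(x)}|\max\{\cdots\}|^2$ are bounded on $S_{c}(\gamma)$, we have $\| \rho_n \| = O(1)$, so the displayed identity and the bound on $\mathcal{E}\mathcal{E}^*$ give $\| w_n \| \le \frac{1}{\sqrt{2a}}(\| \nabla \mathscr{L}_n \| + \| \zeta_n \| + \| \rho_n \|)$; thus $\eta_n$, and hence $\nabla_x \eta_n = (D_x w_n)^* w_n$, stay bounded.

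Next I would split into two regimes along a subsequence. If $c_n I_n \to \infty$, I would bound $\| DF(x_n)^*[\alpha_n] + \sum_i \beta_{i,n}\nabla g_i(x_n) \|$ from below by $\delta$: separating the genuinely inactive indices (where $g_i(x_n)$ is bounded away from $0$ but $\beta_i = -\mu_i$ is bounded) from the near-active ones (where $g_i(x_n) \to 0$, so the component $\diag(g_i)\beta$ spoiling the bound from $\mathcal{E}^*$ is $o(|\beta|)$), one obtains $\| DF(x_n)^*[\alpha_n] + \sum_i \beta_{i,n}\nabla g_i(x_n) \| \ge \mathrm{const}\cdot c_n I_n - O(1) \to \infty$, whereas $\nabla_x L$, the $\psi$-term and $\nabla_x\eta_n$ are all $O(1)$; this forces $\| \nabla_x \mathscr{L}_n \| \to \infty$, contradicting $\| \nabla \mathscr{L}_n \| \to 0$. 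The delicate regime is $c_n I_n = O(1)$, where every term is $O(I_n) \to 0$ and the penalty terms no longer dominate; here the essential point is the cancellation, in $\nabla_x L + (DF(x)^*[\alpha] + \sum_i \beta_i \nabla g_i)$, of the inactive part $\sum \mu_i \nabla g_i$ of $\nabla_x L$ against the part $-\sum \mu_i \nabla g_i$ of the inequality-penalty term (since $\beta_i = -\mu_i$ on the inactive set). Combining the smallness of $\nabla_x \mathscr{L}_n$, the smallness of $(\nabla_\lambda \mathscr{L}_n, \nabla_\mu \mathscr{L}_n)$, the now-sharpened estimate $\| w_n \| = O(I_n)$, and the uniform lower bound $\delta$, one shows that the multipliers on the inactive constraints are forced to be $O(I_n)$; since $I_n \ge \frac{p(x_n,\mu_n)}{c_n}|\mu_n^{-}|$ always holds for the inactive block $\mu_n^{-}$, this yields $c_n = O(1)$, contradicting $c_n \to \infty$.

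I expect this last regime to be the main obstacle: it is precisely the situation in which the infeasibility measure is produced by the clipping $-\frac1c p(x,\mu)\mu$ acting on spurious nonzero multipliers of inactive constraints rather than by a genuine primal violation, and taming it requires simultaneously the CQ-driven bound on $\eta$, the cancellation of the inactive-multiplier terms between $\nabla_x L$ and the penalty term, and a quantitative exploitation of the clipping for large $c$; the bookkeeping must moreover be uniform over $V \times \Lambda$, which is why all estimates are kept in terms of the single constant $a$ and the uniform derivative bounds. Finally, the ``in particular'' statement follows because, under its hypotheses, Theorem~\ref{thrm:SublevelBoundedness} applies with $V = \Omega_r(\gamma + \varepsilon)$ (note $\phi'(0) > 0$ and convexity give $\phi(t) \ge \phi'(0)t$), so that $S_c(\gamma)$ is bounded and, for all sufficiently large $c$, confines $x$ to $\Omega_r(\gamma + \varepsilon) = V$ and $(\lambda, \mu)$ to a fixed bounded set $\Lambda$; the general case then yields \eqref{eq:GradEstim_InfesasibilityMes} on all of $S_c(\gamma)$.
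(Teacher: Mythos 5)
Your structural groundwork is sound and coincides with the paper's machinery: your identity $(\nabla_\lambda \mathscr{L}, \nabla_\mu \mathscr{L}) = \zeta + \rho + \mathcal{E}(x)\mathcal{E}(x)^* w$ is exactly \eqref{eq:MultipliersDerivExpr}, your bound $\| w \| \le (2a)^{-1/2}(\| \nabla \mathscr{L} \| + \| \zeta \| + \| \rho \|)$ is the paper's estimate \eqref{eq:EtaUpperEstimate}, the smallness of $\| F(x) \|$ and $|q(x,\mu,c)|$ on $(V \times \Lambda) \cap S_c(\gamma)$ for large $c$ is \eqref{eq:SmallConstraintViol}, your regime $c_n I_n \to \infty$ can indeed be closed along the lines you indicate, and your treatment of the ``in particular'' part matches the paper's final lemma. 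The genuine gap is precisely the regime you flag as the main obstacle, $c_n I_n = O(1)$, and the one concrete deduction you offer there is a non sequitur: from ``the inactive multipliers are $O(I_n)$'' together with $I_n \ge \frac{p(x_n,\mu_n)}{c_n}|\mu_n^-|$ one cannot conclude $c_n = O(1)$ --- both inequalities hold simultaneously for every $c$ (e.g.\ when $\mu_n^- \to 0$, or when $I_n$ is produced by $\| F(x_n) \|$ rather than by clipping), and the inequality you invoke points the wrong way for the absorption argument you would actually need (an upper bound $I_n \le \mathrm{const} \cdot \frac{p}{c_n}|\mu_n^-|$ plus control of the remaining contributions to $I_n$). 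Moreover, the assertion that the inactive multipliers are $O(I_n)$ is itself only announced; extracting it from the cancellation $\beta_i = -\mu_i$ requires exactly the quantitative per-index bookkeeping you defer.

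The paper avoids your dichotomy altogether with a single uniform estimate (Lemma~\ref{lemma:SepGradientEstimates}): assuming only that the dual gradient is bounded by $K I$, it applies $(D F(x), \nabla g(x))$ to $\nabla_x \mathscr{L}$, so that the penalty terms aggregate into $c\, \mathcal{E}(x) \mathcal{E}(x)^*$ applied to the vector with components $(1 + \| \lambda \|^2) \phi'(\| F(x) \|^2) F(x)$ and $p(x,\mu)^{-1} q(x,\mu,c)$, whose norm is at least $c \sqrt{2a} \, \min\{ \phi'(0), 1/\psi(0) \}\, I$ (see \eqref{eq:LongDerivativeExpr}); the mismatch term $\diag(g_i(x)^2)\big(\mu + \frac{c}{p(x,\mu)} q(x,\mu,c)\big)$ is tamed by the algebraic identity $(A.3)$ of Di Pillo--Lucidi rather than by index splitting; and --- the decisive point for your hard regime --- every correction term, including $\sqrt{2\eta}$ via the dual-gradient hypothesis, is bounded by a multiple of $I$ itself, with coefficient $O(1) + \varepsilon(c)\, c$ where $\varepsilon(c) \to 0$ uniformly on $(V \times \Lambda) \cap S_c(\gamma)$ by \eqref{eq:SmallConstraintViol}. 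This yields $\| \nabla_x \mathscr{L} \| \ge c \varkappa I$, which contradicts $\| \nabla_x \mathscr{L} \| < K I$ as soon as $c > K/\varkappa$: a ratio contradiction that works whether or not $cI$ stays bounded, making the case $c_n I_n = O(1)$ disappear. If you wish to keep your subsequence format, the fix is to replace your regime-2 sketch by this proportional-to-$I$ error control; as written, the proof is incomplete at its crux.
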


We divide the proof of this theorem into three lemmas. We start with a somewhat cumbersome technical lemma, which is 
the core part of the proof of Theorem~\ref{thrm:GradientEstimates}.

\begin{lemma} \label{lemma:SepGradientEstimates}
Under the assumptions of Theorem~\ref{thrm:GradientEstimates} for all $K > 0$ and $\gamma \in \mathbb{R}$, and any
bounded set $\Lambda \subset H \times \mathbb{R}^m$ there exist $\varkappa > 0$ and $c_* > 0$ such that for any 
$c \ge c_*$ and all $\xi = (x, \lambda, \mu) \in (V \times \Lambda) \cap S_c(\gamma)$ satisfying the inequality 
\begin{equation} \label{eq:MultipliersGradEstimate}
  \Big\| \big( \nabla_{\lambda} \mathscr{L}(\xi, c), 
  \nabla_{\mu} \mathscr{L}(\xi, c) \big) \Big\|
  \le K \left( \big\| F(x) \big\| + \left|\max\left\{ g(x), - \frac{p(x, \mu)}{c} \mu \right\}\right| \right)
\end{equation}
one has
\[ 
  \big\| \nabla_x \mathscr{L}(x, \lambda, \mu, c) \big\| 
  \ge c \varkappa \left( \big\| F(x) \big\| 
  + \left|\max\left\{ g(x), - \frac{p(x, \mu)}{c} \mu \right\}\right| \right).
\]
\end{lemma}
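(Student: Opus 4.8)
The plan is to prove the estimate by testing $\nabla_x\mathscr{L}$ against its own dominant ``penalty'' part and converting the resulting pairing into a lower bound through the nonlocal constraint qualification, encoded in the self-adjoint operator $\mathcal{E}(x)\mathcal{E}(x)^*$. First I would fix the abbreviations $w = \nabla_x L(x,\lambda,\mu)$, $\theta = \max\{g(x), -\frac1c p(x,\mu)\mu\}$, $\rho = \|F(x)\| + |\theta|$, the multiplier estimates $\lambda_c = c(1+\|\lambda\|^2)\phi'(\|F(x)\|^2)F(x)$ and $\mu_c = (c/p(x,\mu))\theta$, and the pair $u = DF(x)[w]$, $\zeta = \nabla g(x) w + \diag(g_i(x)^2)\mu$ (so that $\eta = \frac12(\|u\|^2+|\zeta|^2)$). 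From Lemma~\ref{lem:LowerEstimate} and the membership $\mathscr{L}\le\gamma$, together with the boundedness assumptions, I obtain that on $(V\times\Lambda)\cap S_c(\gamma)$ the quantities $\|F(x)\|$, $|\max\{g(x),0\}|$ and hence $|\theta|$ tend to $0$ uniformly as $c\to\infty$; consequently $b(x),p(x,\mu)$ stay bounded and bounded away from $0$, $\phi'(\|F(x)\|^2)$ stays close to $\phi'(0)>0$, and these facts give $\|(\lambda_c,\mu_c)\|^2 \ge c^2 c_0\rho^2$ for some $c_0>0$ and all large $c$.

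The backbone is two identities read off from Proposition~\ref{prp:Derivatives}. Recalling \eqref{eq:QuadTermViaSelfAdjointOper}, a direct check shows
\[
  (\nabla_\lambda\mathscr{L}, \nabla_\mu\mathscr{L}) = \Big(F(x)+c\phi(\|F(x)\|^2)\lambda,\ \theta + \tfrac{c}{b(x)}|\theta|^2\mu\Big) + \mathcal{E}(x)\mathcal{E}(x)^*(u,\zeta),
\]
the second summand being exactly the collection of $\eta$-terms in the dual gradients. By the positive definiteness of $Q(x)$ with constant $a$, i.e. $\|\mathcal{E}(x)\mathcal{E}(x)^*(\cdot)\|\ge\sqrt{2a}\|\cdot\|$ (Corollary~\ref{crlr:LICQvsPositiveDef}), and the hypothesis \eqref{eq:MultipliersGradEstimate}, this yields $\|(u,\zeta)\| \le \frac{1}{\sqrt{2a}}(K\rho + \|R\|)$, where $R$ denotes the first summand; the a priori smallness above gives $\|R\| = o(c\rho)$, hence $\|(u,\zeta)\| = o(c\rho)$. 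Simultaneously the primal formula reads $\nabla_x\mathscr{L} = w + P_0 + T_3 + T_4$, where $P_0 = DF(x)^*[\lambda_c] + \sum_i(\mu_c)_i\nabla g_i(x)$ is the first component of $\mathcal{E}(x)^*(\lambda_c,\mu_c)$, $T_3$ is the $\nabla\psi$-correction, and $T_4 = \nabla_x\eta$ gathers the second-order terms; here $\|T_3\| = o(c\rho)$ and, since $T_4$ is linear in $(u,\zeta)$ with coefficients bounded on $V$, $\|T_4\| \le C\|(u,\zeta)\| = o(c\rho)$.

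I would then test $\nabla_x\mathscr{L}$ against $P_0$. Using $\|\mathcal{E}(x)^*(\lambda_c,\mu_c)\|^2 = \|P_0\|^2 + |\diag(g_i)\mu_c|^2$ and the computation $\langle w, P_0\rangle = \langle(u,\zeta),(\lambda_c,\mu_c)\rangle - \langle\mu_c,\diag(g_i^2)\mu\rangle$, the two $\diag$-terms combine into $-\sum_i g_i(x)^2(\mu_c)_i\big((\mu_c)_i + \mu_i\big)$, which \emph{vanishes on every index with} $(\mu_c)_i = -\mu_i$ (the strictly inactive constraints, where $\theta_i = -\frac1c p(x,\mu)\mu_i$) and is $o(c^2\rho^2)$ on the remaining nearly active indices. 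Together with $\|\mathcal{E}(x)^*(\lambda_c,\mu_c)\|^2 = \langle\mathcal{E}(x)\mathcal{E}(x)^*(\lambda_c,\mu_c),(\lambda_c,\mu_c)\rangle \ge \sqrt{2a}\|(\lambda_c,\mu_c)\|^2$ and the estimates $\langle(u,\zeta),(\lambda_c,\mu_c)\rangle = o(c^2\rho^2)$ and $\langle T_3+T_4,P_0\rangle = o(c^2\rho^2)$ from the previous step, this gives $\langle\nabla_x\mathscr{L},P_0\rangle \ge \frac12\sqrt{2a}\,c^2 c_0\rho^2$ for all large $c$. Since $\|P_0\| \le \|\mathcal{E}(x)^*(\lambda_c,\mu_c)\| \le \|\mathcal{E}(x)^*\|\,\|(\lambda_c,\mu_c)\| = O(c\rho)$ (the estimate being trivial when $\rho=0$), dividing yields $\|\nabla_x\mathscr{L}\| \ge \langle\nabla_x\mathscr{L},P_0\rangle/\|P_0\| \ge c\varkappa\rho$ for a suitable $\varkappa>0$ and all $c\ge c_*$.

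The hard part is the simultaneous and \emph{uniform} domination of the three families of remainder terms — the second-order contributions $T_4 = \nabla_x\eta$, the $\nabla\psi$-term $T_3$, and the cross term $\langle w,P_0\rangle$ — over the whole range $0\le\rho\lesssim c^{-1/2}$ and for all large $c$, where the leading quantity $\|P_0\|^2$ is itself only of order $c^2\rho^2$. Mere boundedness of $w$ does not suffice; what closes the argument is, on one hand, that the hypothesis \eqref{eq:MultipliersGradEstimate} forces $(u,\zeta)=o(c\rho)$ through the identity above, and, most crucially, that the two $\diag(g_i)$-corrections cancel exactly on the strictly inactive constraints. This cancellation is what removes the otherwise order-$c^2\rho^2$ error produced by inactive constraints with $\mu_i\ne0$ and is the heart of the estimate; coping with the nonsmooth $\max$ through the splitting into strictly inactive versus nearly active indices — the exact active set $\{g_i(x)=0\}$ being unstable — is the principal bookkeeping difficulty I would expect.
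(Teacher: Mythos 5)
Your argument is correct, and it dualizes the same coercivity as the paper, but in a genuinely different way. The paper applies the map $z \mapsto (DF(x)[z], \nabla g(x) z)$ to $\nabla_x \mathscr{L}$ (its expansion \eqref{eq:LongDerivativeExpr}), so that the dominant penalty part appears as $c\,\mathcal{E}(x)\mathcal{E}(x)^*$ acting on the vector $\bigl((1+\|\lambda\|^2)\phi'(\|F(x)\|^2)F(x),\ p(x,\mu)^{-1}\max\{g(x), -c^{-1}p(x,\mu)\mu\}\bigr)$; this is bounded below in norm by $c\sqrt{2a}$ times the infeasibility measure via the operator bound \eqref{eq:UniformTopologicalInject}, and $\|\nabla_x\mathscr{L}\|$ is then recovered by dividing by the uniform bound $S_D$ on $\|DF(x)\| + \|\nabla g(x)\|$. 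You instead pair $\nabla_x\mathscr{L}$ with the penalty direction $P_0$ (the first component of $\mathcal{E}(x)^*(\lambda_c,\mu_c)$) and divide by $\|P_0\| = O(c\rho)$, replacing the operator lower bound by the quadratic-form lower bound for $\mathcal{E}\mathcal{E}^*$. Likewise, where the paper neutralizes the troublesome $\diag(g_i(x)^2)\mu$ contribution by adding and subtracting $w(x,\mu,c)$ and invoking identity (A.3) of \cite{DiPilloLucidi2001}, you achieve the same effect through the exact cancellation of $-\sum_i g_i(x)^2 (\mu_c)_i\bigl((\mu_c)_i + \mu_i\bigr)$ on strictly inactive indices, with the nearly active ones absorbed into $o(c^2\rho^2)$ --- these are two presentations of the same phenomenon, and both proofs rest on the same two pillars: hypothesis \eqref{eq:MultipliersGradEstimate} converted via \eqref{eq:MultipliersDerivExpr} into smallness of $(u,\zeta)$ (your bound is precisely the paper's estimate \eqref{eq:EtaUpperEstimate} for $\sqrt{2\eta}$), and the uniform a priori smallness \eqref{eq:SmallConstraintViol} of the constraint violation on $S_c(\gamma)$ obtained from Lemma~\ref{lem:LowerEstimate}. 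Your route buys a cleaner ledger, since every remainder is measured against the single quantity $\|(\lambda_c,\mu_c)\|^2 \sim c^2\rho^2$; the paper's buys an explicit formula \eqref{eq:LongDerivativeExpr} that is reused elsewhere (e.g. in the proof of Theorem~\ref{thrm:CompleteExactness}). Two points to tighten. First, the step $\|\mathcal{E}(x)^*(\lambda_c,\mu_c)\|^2 = \langle \mathcal{E}(x)\mathcal{E}(x)^*(\lambda_c,\mu_c), (\lambda_c,\mu_c)\rangle \ge \sqrt{2a}\,\|(\lambda_c,\mu_c)\|^2$ is not Cauchy--Schwarz: justify it by the spectral theorem for the positive self-adjoint operator $\mathcal{E}(x)\mathcal{E}(x)^*$, whose spectrum lies in $[\sqrt{2a}, +\infty)$ by \eqref{eq:UniformTopologicalInject}, or settle for $\|\mathcal{E}(x)^* v\|^2 \ge \|\mathcal{E}(x)\mathcal{E}(x)^* v\|^2 / \|\mathcal{E}(x)\|^2 \ge 2a\,\|v\|^2/\|\mathcal{E}(x)\|^2$, which suffices with a smaller constant since $\|\mathcal{E}(x)\|$ is uniformly bounded on $V$. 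Second, your $o(\cdot)$'s must be uniform over $(V \times \Lambda)\cap S_c(\gamma)$; they are, once written quantitatively --- each remainder is $O(c^{3/2}\rho^2)$ against a main term bounded below by a constant times $c^2\rho^2$, using $\|F(x)\| + |\max\{g(x), -c^{-1}p(x,\mu)\mu\}| = O(c^{-1/2})$ on $S_c(\gamma)$ --- so that $\varkappa$ and $c_*$ come out independent of the point $(x,\lambda,\mu)$, as the lemma requires.
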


\begin{proof}
Fix any $K > 0$, $\gamma \in \mathbb{R}$, and bounded set $\Lambda \subset H \times \mathbb{R}^m$. By
Corollary~\ref{crlr:LICQvsPositiveDef} and the second assumption of Theorem~\ref{thrm:GradientEstimates}, for any 
$x \in V$ one has
\begin{equation} \label{eq:UniformTopologicalInject}
  \big\| \mathcal{E}(x) \mathcal{E}(x)^*(\lambda, \mu) \big\| \ge \sqrt{2a} \| (\lambda, \mu) \|
  \quad \forall \lambda \in H, \: \mu \in \mathbb{R}^m. 
\end{equation}
With the use of Proposition~\ref{prp:Derivatives} and the definition of $\mathcal{E}(x)$ from 
Corollary~\ref{crlr:LICQvsPositiveDef} one can readily verify that
\begin{align} \notag
  \begin{pmatrix} 
    \nabla_{\lambda} \mathscr{L}(\xi, c) \\ \nabla_{\mu} \mathscr{L}(\xi, c)
  \end{pmatrix}
  &= \begin{pmatrix}
    F(x) + c \phi(\| F(x) \|^2) \lambda \\
    \max\left\{ g(x), - \frac{p(x, \mu)}{c} \mu \right\} 
    + \frac{c}{b(x)} \left| \max\left\{ g(x), - \frac{p(x, \mu)}{c} \mu \right\} \right|^2 \mu
  \end{pmatrix}
  \\
  &+ \mathcal{E}(x) \mathcal{E}(x)^* 
  \begin{pmatrix}
    D F(x)[\nabla_x L(x, \lambda, \mu)] \\ \nabla g(x) \nabla_x L(x, \lambda, \mu) + \diag(g_i(x)^2) \mu
  \end{pmatrix}
  \label{eq:MultipliersDerivExpr}
\end{align}
for any $\xi = (x, \lambda, \mu) \in \Omega \times H \times \mathbb{R}^m$ and $c > 0$. Recall that by our
assumption the function $\phi$ is continuously differentiable, $\phi(0) = 0$, and the nonlinear operator $F$ is bounded
on $V$. Consequently, there exists $\phi_{\max} > 0$ such that for all $x \in V$ one has 
$\phi(\| F(x) \|^2) \le \phi_{\max} \| F(x) \|^2$. Hence applying \eqref{eq:MultipliersDerivExpr},
\eqref{eq:UniformTopologicalInject}, and the definition of $\eta$ (see \eqref{eq:EtaDef}), one obtains that
\begin{multline} \label{eq:EtaUpperEstimate}
  \sqrt{2\eta(x, \lambda, \mu)} 
  \le \frac{1}{\sqrt{2a}} \bigg[ \Big( K + 1 + c \phi_{\max} \| F(x) \| \| \lambda \| \Big) \| F(x) \| 
  \\
  + \bigg( K + 1 + \frac{c |\mu|}{b(x)} \left|\max\left\{ g(x), - \frac{p(x, \mu)}{c} \mu \right\}\right| \bigg) 
  \left|\max\left\{ g(x), - \frac{p(x, \mu)}{c} \mu \right\}\right| \bigg]
\end{multline}
for any $x \in V$, $\lambda \in H$, $\mu \in \mathbb{R}^m$, and $c > 0$ satisfying inequality
\eqref{eq:MultipliersGradEstimate}.

Let us now estimate the norm of $\nabla_x \mathscr{L}(x, \lambda, \mu, c)$. To this end, fix any 
$\xi = (x, \lambda, \mu) \in \Omega \times H \times \mathbb{R}^m$ and $c > 0$, and consider the functions
\[
  D F(x)[\nabla_x \mathscr{L}(x, \lambda, \mu, c)], \quad
  \nabla g(x)[\nabla_x \mathscr{L}(x, \lambda, \mu, c)].
\]
Applying Proposition~\ref{prp:Derivatives}, and adding and subtracting 
\[
  w(x, \mu, c) := 
  \diag(g_i(x)^2) \left( \mu + \frac{c}{p(x, \mu)} \max\left\{ g(x), - \frac{1}{c} p(x, \mu) \mu \right\} \right) 
\]
in the second row, one obtains that
\begin{multline} 
  \begin{pmatrix} 
    D F(x)\big[ \nabla_x \mathscr{L}(\xi, c) \big] 
    \\ 
    \nabla g(x) \big[ \nabla_x \mathscr{L}(\xi, c) \big]
  \end{pmatrix}
  = \begin{pmatrix} 
    D F(x)[\nabla_x L(\xi)] 
    \\ 
    \nabla g(x)[\nabla_x L(\xi)] + \diag(g_i(x)^2) \mu 
  \end{pmatrix}
  \\
  + c \mathcal{E}(x) \mathcal{E}(x)^*
  \begin{pmatrix} 
    (1 + \| \lambda \|^2) \phi'(\| F(x) \|^2) F(x) 
    \\
    \frac{1}{p(x, \mu)} \max\left\{ g(x), - \frac{p(x, \mu)}{c} \mu \right\}
  \end{pmatrix}
  \\
  - \begin{pmatrix} 0 \\ w(x, \mu, c) \end{pmatrix}
  + c \left| \max\left\{ g(x), - \frac{p(x, \mu)}{c} \mu \right\} \right|^2 A_0(x, \mu)	
  \\
  + A_1(\xi) \Big[ D F(x)[\nabla_x L(\xi)] \Big]
  + A_2(\xi) \Big[ \nabla g(x)[\nabla_x L(\xi)] + \diag(g_i(x)^2) \mu \Big],
  \label{eq:LongDerivativeExpr}
\end{multline}
where the vector $A_0(x, \mu)$, and the linear operators $A_1(x, \lambda, \mu)$ and $A_2(x, \lambda, \mu)$ are defined
via the vectors $\lambda$ and $\mu$, the functions $f$, $F$, and $g_i$, $i \in M$, as well as their first and second
order derivatives, and the functions $\psi(\max\{ 0, g(x) \})$ and $\nabla\psi(\max\{ g(x), 0 \})$.

Let $q(x, \mu, c) = \max\{ g(x), - c^{-1} p(x, \mu) \mu \}$ As was pointed out in \cite{DiPilloLucidi2001} (see equality
$(A.3)$), the following equality holds true:
\begin{multline*}
  \diag(g_i(x)) \mu = \diag(\mu_i) q(x, \mu, c)
  \\
  + \frac{c}{p(x, \mu)} \left( \diag\left(\max\left\{ g_i(x), - \frac{p(x, \mu)}{c} \mu_i \right\}\right) -
  \diag(g_i(x)) \right) 
  q(x, \mu, c)
\end{multline*}
(the validity of this equality can be easily verified by considering two cases: $g_i(x) \ge c^{-1} p(x, \mu) \mu_i$
and $g_i(x) < c^{-1} p(x, \mu) \mu_i$). Therefore
\begin{align*}
  w(x, \mu, c) &= \diag(g_i(x)) \bigg[ \diag(\mu_i) q(x, \mu, c) 
  \\
  &+ \frac{c}{p(x, \mu)} \diag\left(\max\left\{ g_i(x), - \frac{1}{c} p(x, \mu) \mu_i \right\}\right)
  q(x, \mu, c) \bigg].
\end{align*}
Hence with the use of \eqref{eq:LongDerivativeExpr} and \eqref{eq:UniformTopologicalInject} one gets that
\begin{align*}
  \Big( \| D F(x) \| &+ \| \nabla g(x) \| \Big) \big\| \nabla_x \mathscr{L}(x, \lambda, \mu, c) \big\|
  \ge - \sqrt{2 \eta(x, \lambda, \mu)} 
  \\
  &+ c \sqrt{2a} 
  \Big[ (1 + \| \lambda \|^2) \phi'(\| F(x) \|^2) \| F(x) \| 
  + \frac{1}{p(x, \mu)} \left| q(x, \mu, c) \right| \Big]
  \\
  &- |\mu| |g(x)| \left| q(x, \mu, c) \right|
  - c \big( \frac{|g(x)|}{p(x, \mu)} + \| A_0(x, \mu) \| \big) 
  \big| q(x, \mu, c) \big|^2 
  \\
  &- \| A_1(x, \lambda, \mu) \| \Big\| D F(x)[\nabla_x L(x, \lambda, \mu)] \Big\|
  \\
  &- \| A_2(x, \lambda, \mu) \| \Big\| \nabla g(x)[\nabla_x L(x, \lambda, \mu)] + \diag(g_i(x)^2) \mu \Big\| 
\end{align*}
for all $x \in V$, $\lambda \in H$, $\mu \in \mathbb{R}^m$, and $c > 0$.

By our assumptions the functions $f$, $g_i$, $i \in M$, and $F$, as well as their first and second order derivatives
are bounded on the set $V$. Consequently, one can find $S_0, S_1, S_2, S_D, S_{\mu}, S_g > 0$ such that
for any $x \in V$ and for any $(\lambda, \mu)$ from the bounded set $\Lambda \subset H \times \mathbb{R}^m$ one has
\begin{gather*}
  \| A_0(x, \mu) \| \le S_0, \quad \| A_1(x, \lambda, \mu) \| \le S_1, \quad \| A_2(x, \lambda, \mu) \| \le S_2 
  \\
  \| D F(x) \| + \| \nabla g(x) \| \le S_D, \quad |\mu| \le S_{\mu}, \quad |g(x)| \le S_g.
\end{gather*}
Moreover, from the convexity of $\phi$ it follows that $\phi'(\| F(x) \|^2) \ge \phi'(0)$ for any $x \in X$.
Hence for any $(x, \lambda, \mu) \in V \times \Lambda$ and for all $c > 0$ one has
\begin{align*}
  S_D \big\| \nabla_x \mathscr{L}(x, \lambda, \mu, c) \big\| 
  &\ge \frac{c \sqrt{2a}}{\max\{ \psi(0), 1/\phi'(0) \}}
  \left( \| F(x) \|  + \big| q(x, \mu, c) \big| \right)
  \\
  &- S_{\mu} S_g \big| q(x, \mu, c) \big|
  - c \left( \frac{S_g(1 + S_{\mu}^2)}{b(x)} + S_0 \right) 
  \big| q(x, \mu, c) \big|^2
  \\
  &- (1 + S_1 + S_2) \sqrt{2\eta(x, \lambda, \mu)},
\end{align*}
which with the use of \eqref{eq:EtaUpperEstimate} implies that
\[
  \big\| \nabla_x \mathscr{L}(x, \lambda, \mu, c) \big\| 
  \ge \frac{c}{S_D} t(x, \mu, c) 
  \Big( \| F(x) \| + \left| \max\left\{ g(x), - \frac{1}{c} p(x, \mu) \mu \right\} \right| \Big),
\]
where
\begin{multline*}
  t(x, \mu, \lambda, c) = \frac{\sqrt{2a}}{\max\{ \psi(0), 1/\phi'(0) \}} - \frac{S_{\mu} S_g}{c} 
  - \left( \frac{S_g(1 + S_{\mu}^2)}{b(x)} + S_0 \right) 
  \big| q(x, \mu, c) \big|
  \\
  - \frac{(1 + S_1 + S_2)}{\sqrt{2a}} \left( \frac{(K + 1)}{c} + \phi_{\max} \| F(x) \| \| \lambda \|
  + \frac{S_{\mu}}{b(x)} \big| q(x, \mu, c) \big| \right). 
\end{multline*}
Let us check that there exist $c_* > 0$ and $t_0 > 0$ such that for all $c \ge c_*$ and
$(x, \lambda, \mu) \in (V \times \Lambda) \cap S_c(\gamma)$ one has $t(x, \mu, \lambda, c) \ge t_0$.
Then putting $\varkappa = t_0 / S_D$ one obtains the required result.

To prove the existence of $c_*$ and $t_0$, it is sufficient to show that for any $\varepsilon > 0$ there exists
$\widehat{c} > 0$ such that for all $c \ge \widehat{c}$ and $(x, \lambda, \mu) \in S_c(\gamma)$ with $x \in V$ the
following inequalities hold true:
\begin{equation} \label{eq:SmallConstraintViol}
  \| F(x) \| < \varepsilon, \quad 
  \left| \max\left\{ g(x), - \frac{1}{c} p(x, \mu) \mu \right\} \right| < \varepsilon, \quad
  b(x) \ge \frac{\psi(0)}{2}.
\end{equation}
Let us prove the existence of such $\widehat{c}$.

Fix any $\varepsilon > 0$. By our assumption the function $f$ is bounded on $V$. Consequently, by
Lemma~\ref{lem:LowerEstimate} for any $c > 0$ and $(x, \lambda, \mu) \in S_c(\gamma)$ with $x \in V$ one has
\begin{align*}
  \gamma \ge \mathscr{L}(x, \lambda, \mu, c) &\ge f_0 + \frac{c \phi'(0)}{2} \| F(x) \|^2 
  + \frac{c}{2 \psi(0)} \left| \max\left\{ g(x), - \frac{1}{c} p(x, \mu) \mu \right\} \right|^2
  \\
  &- \frac{1}{2c} \left( \frac{1}{\phi'(0)} + \psi(0) \right),
\end{align*}
where $f_0 = \inf\{ f(x) \mid x \in V \}$. Therefore, for any
\[
  c \ge \max\left\{ \frac{1}{2} \left( \frac{1}{\phi'(0)} + \psi(0) \right), 
  \frac{2(\gamma + 1 - f_0)}{\varepsilon^2 \phi'(0)}, \frac{2(\gamma + 1 - f_0) \psi(0)}{\varepsilon^2} \right\}
\]
and for all $(x, \lambda, \mu) \in S_c(\gamma)$ with $x \in V$ the first two inequalities in
\eqref{eq:SmallConstraintViol} hold true. 

Note that from the second inequality in \eqref{eq:SmallConstraintViol} it follows that 
$\max\{ g_i(x), 0 \} < \varepsilon$ for all $i \in M$. Consequently, decreasing $\varepsilon > 0$, if necessary, one
can suppose that $b(x) := \psi(\max\{ g(x), 0 \}) \ge \psi(0) / 2$, since by definition zero is a point of global
maximum of the function $\psi$.
\end{proof}

\begin{lemma}
Under the assumptions of Theorem~\ref{thrm:GradientEstimates} for all $K > 0$ and $\gamma > 0$, and any bounded set
$\Lambda \subset H \times \mathbb{R}^m$ one can find $c_* > 0$ such that inequality
\eqref{eq:GradEstim_InfesasibilityMes} is satisfied for all $c \ge c_*$ and 
$(x, \lambda, \mu) \in (V \times \Lambda) \cap S_c(\gamma)$.
\end{lemma}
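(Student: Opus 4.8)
The plan is to obtain the full gradient estimate by a simple dichotomy, using Lemma~\ref{lemma:SepGradientEstimates} as the only nontrivial ingredient. The starting observation is that, since $\nabla \mathscr{L}(\xi, c)$ is an element of the product space $X \times H \times \mathbb{R}^m$ with the sum-of-squares norm, one has the two elementary lower bounds $\| \nabla \mathscr{L}(\xi, c) \| \ge \| \nabla_x \mathscr{L}(\xi, c) \|$ and $\| \nabla \mathscr{L}(\xi, c) \| \ge \| ( \nabla_\lambda \mathscr{L}(\xi, c), \nabla_\mu \mathscr{L}(\xi, c) ) \|$. Writing $\rho(x, \mu, c) = \| F(x) \| + | \max\{ g(x), -c^{-1} p(x, \mu) \mu \} |$ for the infeasibility measure on the right-hand side of \eqref{eq:GradEstim_InfesasibilityMes}, the goal is precisely to establish $\| \nabla \mathscr{L}(\xi, c) \| \ge K \rho(x, \mu, c)$ for all large $c$ and all admissible $\xi$.

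First I would fix $K > 0$, $\gamma$, and the bounded set $\Lambda$, and apply Lemma~\ref{lemma:SepGradientEstimates} with this same $K$ to produce constants $\varkappa > 0$ and $c_*^{(1)} > 0$. I would then set $c_* = \max\{ c_*^{(1)}, K / \varkappa \}$ and fix any $c \ge c_*$ together with an arbitrary $\xi = (x, \lambda, \mu) \in (V \times \Lambda) \cap S_c(\gamma)$. The argument splits according to whether inequality \eqref{eq:MultipliersGradEstimate} holds. If it fails, then by definition $\| ( \nabla_\lambda \mathscr{L}(\xi, c), \nabla_\mu \mathscr{L}(\xi, c) ) \| > K \rho(x, \mu, c)$, and the second elementary lower bound above immediately yields \eqref{eq:GradEstim_InfesasibilityMes}. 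If instead \eqref{eq:MultipliersGradEstimate} holds, then Lemma~\ref{lemma:SepGradientEstimates} applies and gives $\| \nabla_x \mathscr{L}(x, \lambda, \mu, c) \| \ge c \varkappa \rho(x, \mu, c) \ge K \rho(x, \mu, c)$, where the last step uses $c \ge K / \varkappa$; combining this with the first elementary lower bound again yields \eqref{eq:GradEstim_InfesasibilityMes}. In either case the desired estimate holds, completing the proof.

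The point to be careful about is that Lemma~\ref{lemma:SepGradientEstimates} must be invoked with exactly the same threshold constant $K$ that appears in the dichotomy, so that the two cases dovetail; the constant $\varkappa$ it returns then fixes the final lower bound $K/\varkappa$ on the penalty parameter. Beyond this bookkeeping there is no substantive obstacle here: all the analytic work — the uniform topological injectivity \eqref{eq:UniformTopologicalInject} coming from the nonlocal constraint qualification, the upper estimate \eqref{eq:EtaUpperEstimate} of $\eta$, and the smallness \eqref{eq:SmallConstraintViol} of the constraint violation for large $c$ — has already been carried out inside Lemma~\ref{lemma:SepGradientEstimates}. The present lemma is merely the step that glues the separate primal and dual gradient estimates into a single lower bound on $\| \nabla \mathscr{L} \|$.
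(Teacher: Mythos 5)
Your proposal is correct and is essentially the paper's own argument: the paper runs the very same dichotomy as a reductio ad absurdum, supposing $\| \nabla \mathscr{L}(\xi_c, c) \| < K_c$ at points $\xi_c \in (V \times \Lambda) \cap S_c(\gamma)$ for arbitrarily large $c$, so that both component bounds hold, and then contradicting Lemma~\ref{lemma:SepGradientEstimates} once $c > K/\varkappa$. Your direct case split on whether \eqref{eq:MultipliersGradEstimate} holds is just the contrapositive of that reasoning, with the same key lemma, the same component-norm inequalities for the product norm, and the same final threshold $c_* = \max\{ c_*^{(1)}, K/\varkappa \}$.
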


\begin{proof}
Arguing by reductio ad absurdum, suppose that there exist $K > 0$, $\gamma \in \mathbb{R}$, and a bounded set 
$\Lambda \subset H \times \mathbb{R}^m$ such that for any $c > 0$ one can find 
$\xi_c = (x_c, \lambda_c, \mu_c) \in (V \times \Lambda) \cap S_c(\gamma)$ satisfying the inequality 
\[
  \big\| \nabla \mathscr{L}(\xi_c, c) \big\| 
  < K_c := K \Big( \|F(x_c)\| + \left|\max\left\{ g(x_c), - \frac{1}{c} p(x_c, \mu_c) \mu_c \right\}\right| \Big)
\] 
Then for any $c > 0$ one has
\begin{equation} \label{eq:GradSmallerThanInfeasibilityMes}
  \big\| \nabla_x \mathscr{L}(\xi_c, c) \big\| < K_c, \quad 
  \Big\| \big( \nabla_{\lambda} \mathscr{L}(\xi_c, c), 
  \nabla_{\mu} \mathscr{L}(\xi_c, c) \big) \Big\| < K_c.
\end{equation}
From the second inequality and Lemma~\ref{lemma:SepGradientEstimates} it follows that for any sufficiently large
$c > 0$ one has
\[
  \| \nabla_x \mathscr{L}(\xi_c, c) \| \ge c \varkappa
  \Big( \|F(x_c)\| + \left|\max\left\{ g(x_c), - \frac{1}{c} p(x_c, \mu_c) \mu_c \right\}\right| \Big)
\]
for some $\varkappa > 0$ independent of $c$. However, for any $c > K / \varkappa$ this inequality contradicts
the first inequality in \eqref{eq:GradSmallerThanInfeasibilityMes}. Thus, the statement of
Theorem~\ref{thrm:GradientEstimates} is true. 
\end{proof}

\begin{lemma}
Let the assumptions of Theorem~\ref{thrm:GradientEstimates} be satisfied for $V = \Omega_r(\gamma + \varepsilon)$ with
some $r > 0$, $\varepsilon > 0$, and $\gamma \in \mathbb{R}$. Then for all $K > 0$ there exists $c_* > 0$ such that 
inequality \eqref{eq:GradEstim_InfesasibilityMes} holds true for all $c \ge c_*$ and 
$(x, \lambda, \mu) \in S_c(\gamma)$.
\end{lemma}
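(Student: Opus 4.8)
The plan is to remove the restriction to $V \times \Lambda$ in the preceding lemma by showing that, once $c$ is large enough, the whole sublevel set $S_c(\gamma)$ already lies inside $V \times \Lambda$ for a single fixed bounded $\Lambda$, after which the previous lemma applies verbatim.

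First I would check that the hypotheses of the present lemma imply the assumptions of Theorem~\ref{thrm:SublevelBoundedness}. Convexity of $\phi$ together with $\phi(0) = 0$ and $\phi'(0) > 0$ yields $\phi(t) \ge \phi'(0) t$ for all $t \ge 0$, so one may take $\phi_0 = \phi'(0)$; the boundedness of $V = \Omega_r(\gamma + \varepsilon)$, the boundedness (hence boundedness below) of $f$, the boundedness of $g$ and of the first-order derivatives on $V$, and the nonlocal constraint qualification \eqref{eq:NonlocalCQ} on $V$ are precisely the remaining hypotheses of that theorem. Thus Theorem~\ref{thrm:SublevelBoundedness} supplies a $c_0 > 0$ for which $S_{c_0}(\gamma)$ is bounded.

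Next I would exploit the monotonicity noted at the start of the proof of Theorem~\ref{thrm:SublevelBoundedness}: since $\mathscr{L}(x, \lambda, \mu, c)$ is non-decreasing in $c$, one has $S_c(\gamma) \subseteq S_{c_0}(\gamma)$ for every $c \ge c_0$. Hence the projection of the bounded set $S_{c_0}(\gamma)$ onto $H \times \mathbb{R}^m$ is a single bounded set $\Lambda$ that contains the dual components of every point of $S_c(\gamma)$, uniformly in $c \ge c_0$. This is the crucial point, and the only one that requires care: the bounded set $\Lambda$ demanded by the earlier lemma can be fixed once and for all, independently of $c$. Everything else is bookkeeping.

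Finally I would recall, again from the proof of Theorem~\ref{thrm:SublevelBoundedness}, that Lemma~\ref{lem:LowerEstimate} produces a threshold $\widehat{c}$ such that for all $c \ge \widehat{c}$ every $(x, \lambda, \mu) \in S_c(\gamma)$ satisfies $x \in \Omega_r(\gamma + \varepsilon) = V$. Applying the preceding lemma with this fixed $\Lambda$ gives a $c_{**} > 0$ such that \eqref{eq:GradEstim_InfesasibilityMes} holds on $(V \times \Lambda) \cap S_c(\gamma)$ for all $c \ge c_{**}$. Setting $c_* = \max\{ c_0, \widehat{c}, c_{**} \}$, for any $c \ge c_*$ and any $(x, \lambda, \mu) \in S_c(\gamma)$ we have $x \in V$ and $(\lambda, \mu) \in \Lambda$, so the point lies in $(V \times \Lambda) \cap S_c(\gamma)$ and the desired estimate follows.
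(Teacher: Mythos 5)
Your proof is correct and follows essentially the same route as the paper's: invoke Theorem~\ref{thrm:SublevelBoundedness} to bound $S_{c_0}(\gamma)$, use monotonicity of $\mathscr{L}$ in $c$ to fix a single bounded $\Lambda$ independent of $c$, recall the threshold $\widehat{c}$ forcing $x \in \Omega_r(\gamma + \varepsilon)$, and then apply the preceding lemma with $c_* = \max\{c_0, \widehat{c}, c_{**}\}$. Your explicit verification that the hypotheses of Theorem~\ref{thrm:GradientEstimates} imply those of Theorem~\ref{thrm:SublevelBoundedness} (via $\phi(t) \ge \phi'(0)\,t$ from convexity) and your concrete choice of $\Lambda$ as the projection of $S_{c_0}(\gamma)$ merely spell out details the paper leaves implicit.
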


\begin{proof}
If the assumptions of Theorem~\ref{thrm:GradientEstimates} are satisfied for $V = \Omega_r(\gamma + \varepsilon)$, then
by Theorem~\ref{thrm:SublevelBoundedness} there exists $c_0 > 0$ such that the set $S_c(\gamma)$ is bounded for all 
$c \ge c_0$. Moreover, as was shown in the proof of Theorem~\ref{thrm:SublevelBoundedness}, in this case there exists
$\widehat{c} > 0$ such that for all $c \ge \widehat{c}$ and $(x, \lambda, \mu) \in S_c(\gamma)$ one has
$x \in \Omega_r(\gamma + \varepsilon)$. Therefore, one can find a bounded set $\Lambda \subset H \times \mathbb{R}^m$
such that $S_c(\gamma) \subseteq V \times \Lambda$ for all $c \ge \max\{ c_0, \widehat{c} \}$, which by the previous
lemma implies that for all $K > 0$ there exists $c_* > 0$ such that inequality \eqref{eq:GradEstim_InfesasibilityMes}
holds true for all $c \ge \max\{ c_*, c_0, \widehat{c} \}$ and $(x, \lambda, \mu) \in S_c(\gamma)$.
\end{proof}

\section{Exactness of the augmented Lagrangian}
\label{sect:Exactness}

This section is devoted to an analysis of several concepts of exactness of the augmented Lagrangian 
$\mathscr{L}(x, \lambda, \mu, c)$. Namely, we present various types of sufficient conditions for this augmented
Lagrangian to be locally, globally or completely exact. These conditions are based either on
the nonlocal constraint qualification introduced in the previous sections and the use of the gradient estimate from
Theorem~\ref{thrm:GradientEstimates} or second order sufficient optimality conditions.

\subsection{Global exactness}

To give a precise definition of what is meant by ``exactness'' of the augmented Lagrangian 
$\mathscr{L}(x, \lambda, \mu, c)$, consider the following auxiliary unconstrained optimization problem:
\begin{equation} \label{eq:AuxiliaryProblem}
  \min_{(x, \lambda, \mu)} \: \mathscr{L}(x, \lambda, \mu, c).
\end{equation}
Under the assumptions of Theorem~\ref{thrm:SublevelBoundedness}, the sublevel set $S_c(\gamma)$ of the augmented
Lagrangian is bounded, which implies that auxiliary problem \eqref{eq:AuxiliaryProblem} has globally optimal solutions,
provided the function $(x, \lambda, \mu) \mapsto \mathscr{L}(x, \lambda, \mu, c)$ is weakly sequentially lower
semicontinuous and $c > 0$ is sufficiently large. We would like to know how these optimal solutions are connected with
globally optimal solutions of the original problem $(\mathcal{P})$. 

Suppose that for any globally optimal solution $x_*$ of the problem $(\mathcal{P})$ there exist 
$\lambda_* \in H$ and $\mu_* \in \mathbb{R}^m$ such that the triplet $(x_*, \lambda_*, \mu_*)$ satisfies 
the KKT optimality conditions:
\[
  \nabla_x L(x_*, \lambda_*, \mu_*) = 0, \quad F(x_*) = 0, \quad \max\{ g(x_*), -\mu_* \} = 0.
\]
Any triplet $(x_*, \lambda_*, \mu_*)$ satisfying these equalities is called a \textit{KKT-point} 
of the problem $(\mathcal{P})$.

\begin{definition}
One says that the augmented Lagrangian $\mathscr{L}(x, \lambda, \mu, c)$ is \textit{globally exact}, if there exists 
$c_* > 0$ such that for all $c \ge c_*$ a triplet $(x_*, \lambda_*, \mu_*)$ is a globally optimal solution of problem
\eqref{eq:AuxiliaryProblem} if and only if $x_*$ is a globally optimal solution of the problem $(\mathcal{P})$ and
$(x_*, \lambda_*, \mu_*)$ is a KKT-point of this problem.
\end{definition}

Thus, if the augmented Lagrangian $\mathscr{L}(x, \lambda, \mu, c)$ is globally exact, then globally optimal solutions
of auxiliary problem \eqref{eq:AuxiliaryProblem} with a sufficiently large value of the penalty parameter $c$ are
precisely KKT-points corresponding to globally optimal solutions of the problem $(\mathcal{P})$. Furthermore, observe
that if $(x_*, \lambda_*, \mu_*)$ is a KKT-point of the problem $(\mathcal{P})$, then 
$\mathscr{L}(x_*, \lambda_*, \mu_*, c) = f(x_*)$ for all $c > 0$ (see \eqref{eq:ExactAugmLagr}). Therefore, if 
the augmented Lagrangian $\mathscr{L}(x, \lambda, \mu, c)$ is globally exact, then for any $c > 0$ large enough optimal
value of problem \eqref{eq:AuxiliaryProblem} coincides with the optimal value of the problem $(\mathcal{P})$, which 
we denote by $f_*$. Recall that by our assumption there exists a globally optimal solution of 
the problem $(\mathcal{P})$, which implies that $f_*$ is finite.

Note that in the general case
\begin{equation} \label{eq:WeakDuality}
  \inf_{(x, \lambda, \mu)} \mathscr{L}(x, \lambda, \mu, c) \le f_* \quad \forall c > 0,
\end{equation}
since $\mathscr{L}(x_*, \lambda_*, \mu_*, c) = f_*$ for any globally optimal solution $x_*$ of the problem 
$(\mathcal{P})$ and the corresponding Lagrange multipliers $\lambda_*$ and $\mu_*$. Let us show that this inequality
turns into equality precisely when the augmented Lagrangian $\mathscr{L}(x, \lambda, \mu, c)$ is globally exact.

\begin{lemma} \label{lem:GlobalExact_OptimalValue}
Let $Q(x_*)[\cdot]$ be positive definite at every globally optimal solution $x_*$ of the problem $(\mathcal{P})$. Then 
the augmented Lagrangian $\mathscr{L}(x, \lambda, \mu, c)$ is globally exact if and only if the optimal value of 
problem \eqref{eq:AuxiliaryProblem} is equal to $f_*$ for some $c > 0$.
\end{lemma}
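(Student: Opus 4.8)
My plan is to handle the two implications separately, first upgrading the quantifier on $c$. Because $\mathscr{L}(x,\lambda,\mu,c)$ is non-decreasing in $c$, so is $\inf_{(x,\lambda,\mu)}\mathscr{L}(x,\lambda,\mu,c)$; together with the weak duality inequality \eqref{eq:WeakDuality} this shows that if the optimal value of \eqref{eq:AuxiliaryProblem} equals $f_*$ for some $c_1>0$, then it equals $f_*$ for every $c\ge c_1$. The forward implication is then short and essentially recorded in the discussion preceding the lemma: assuming global exactness, I would take a globally optimal solution $x_*$ of $(\mathcal{P})$ (which exists by hypothesis) and use that positive definiteness of $Q(x_*)[\cdot]$ gives, via Lemma~\ref{lem:LICQvsPositiveDef} and metric regularity of the constraints, multipliers $\lambda_*,\mu_*$ turning $(x_*,\lambda_*,\mu_*)$ into a KKT-point. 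By exactness this triple is a global minimiser of \eqref{eq:AuxiliaryProblem}, and $\mathscr{L}(x_*,\lambda_*,\mu_*,c)=f(x_*)=f_*$, so the optimal value equals $f_*$.

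For the converse I assume the optimal value equals $f_*$ for all $c\ge c_1$ and fix any $c>c_1$; establishing the biconditional of the definition for such $c$ yields exactness with $c_*$ any number above $c_1$. One inclusion is immediate, since a KKT-point $(x_*,\lambda_*,\mu_*)$ attached to a globally optimal solution satisfies $\mathscr{L}(x_*,\lambda_*,\mu_*,c)=f(x_*)=f_*=\inf\mathscr{L}(\cdot,c)$ and is thus a global minimiser. The substance is the reverse inclusion. Let $(\bar x,\bar\lambda,\bar\mu)$ be a global minimiser; then $\bar x\in\Omega$ (else $\mathscr{L}=+\infty$) and $\mathscr{L}(\bar x,\bar\lambda,\bar\mu,c)=f_*$. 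The device I would use is to freeze the triple and vary only the parameter: the function $c'\mapsto\mathscr{L}(\bar x,\bar\lambda,\bar\mu,c')$ is non-decreasing, dominates $\inf\mathscr{L}(\cdot,c')=f_*$ for $c'\ge c_1$, and takes the value $f_*$ at $c'=c$, so it is identically $f_*$ on the whole interval $[c_1,c]$.

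Differentiating this constant function on the open interval $(c_1,c)$ annihilates $\partial\mathscr{L}/\partial c$. Only the $\phi$-penalty and the inequality penalty depend on $c$, and the contribution of the inner maximum cancels by an envelope effect, so that
\[
  \frac{\partial \mathscr{L}}{\partial c}(\bar{x}, \bar{\lambda}, \bar{\mu}, c')
  = \frac{1}{2}\big(1 + \|\bar{\lambda}\|^2\big)\phi\big(\|F(\bar{x})\|^2\big)
  + \frac{1}{2 p(\bar{x}, \bar{\mu})}
  \Big| \max\Big\{ g(\bar{x}), -\tfrac{1}{c'} p(\bar{x}, \bar{\mu})\bar{\mu} \Big\} \Big|^2 ,
\]
a sum of two non-negative terms. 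Their vanishing forces $\phi(\|F(\bar x)\|^2)=0$, hence $F(\bar x)=0$, and $\max\{g(\bar x),-(c')^{-1}p(\bar x,\bar\mu)\bar\mu\}=0$, which is precisely $g(\bar x)\le 0$, $\bar\mu\ge 0$, and $\bar\mu_i g_i(\bar x)=0$ for all $i\in M$. With $F(\bar x)=0$ and this maximum vanishing, all penalty terms of \eqref{eq:ExactAugmLagr} drop out and $\mathscr{L}(\bar x,\bar\lambda,\bar\mu,c)=f(\bar x)+\eta(\bar x,\bar\lambda,\bar\mu)=f_*$; since $\bar x$ is feasible one has $f(\bar x)\ge f_*$, and as $\eta\ge 0$ this forces $f(\bar x)=f_*$ (so $\bar x$ is globally optimal) and $\eta(\bar x,\bar\lambda,\bar\mu)=0$. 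Finally, $\eta=0$ gives $D F(\bar x)[\nabla_x L]=0$ and $\langle\nabla g_i(\bar x),\nabla_x L\rangle+g_i(\bar x)^2\bar\mu_i=0$ for each $i$ (see \eqref{eq:EtaDef}); substituting these, $F(\bar x)=0$, and the vanishing maximum into the expression for $\nabla_x\mathscr{L}$ from Proposition~\ref{prp:Derivatives} kills every term but $\nabla_x L$ itself, so the stationarity $\nabla_x\mathscr{L}=0$ (valid because $\mathscr{L}(\cdot,c)$ is differentiable on $\Omega\times H\times\mathbb{R}^m$) reduces to $\nabla_x L(\bar x,\bar\lambda,\bar\mu)=0$, completing the KKT conditions.

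The step I expect to be most delicate is the constancy-in-$c$ argument underlying the whole converse: I must secure an \emph{open} parameter interval on which $\mathscr{L}$ is constant along the frozen triple (which is why $c$ is taken strictly above $c_1$) and justify differentiating through the inner maximum, where the envelope cancellation of the $\partial q/\partial c$ term is exactly what renders $\partial\mathscr{L}/\partial c$ a clean sum of non-negative terms. The second point requiring care is the verification that $\eta=0$, in combination with the complementarity $g_i(\bar x)\bar\mu_i=0$, annihilates all the second-order cross-terms in $\nabla_x\mathscr{L}$; this bookkeeping, rather than any further constraint qualification, is what delivers $\nabla_x L=0$.
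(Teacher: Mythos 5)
Most of your proposal is sound and runs parallel to the paper's proof: the monotonicity of $\mathscr{L}$ in $c$ combined with \eqref{eq:WeakDuality} upgrades ``some $c$'' to ``all $c \ge c_1$''; the forward implication is exactly the observation recorded before the lemma (existence of multipliers at a global solution follows from positive definiteness of $Q(x_*)[\cdot]$ via Lemma~\ref{lem:LICQvsPositiveDef}); and your constancy-in-$c'$ argument is a correct, slightly slicker substitute for the paper's device, which instead uses that the $\phi$-term and the inequality-penalty term \eqref{eq:AugmLagr_InequalityPart} are strictly increasing in $c$ whenever $F(\bar x) \ne 0$, respectively $\max\{g(\bar x), -\bar\mu\} \ne 0$, to produce a contradiction with $\mathscr{L}(\bar x, \bar\lambda, \bar\mu, r) = f_*$ for $r > c$. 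I checked your envelope computation on both branches of the inner maximum: on $\{g_i \ge -\frac{p}{c'}\mu_i\}$ the $i$-th summand contributes $\frac{1}{2p}g_i^2$, on the other branch it equals $-\frac{p}{2c'}\mu_i^2$ with derivative $\frac{p}{2(c')^2}\mu_i^2 = \frac{1}{2p}q_i(c')^2$, and the one-sided derivatives agree at the kink, so $\partial\mathscr{L}/\partial c$ is indeed the stated sum of non-negative terms, and its vanishing delivers $F(\bar x) = 0$, feasibility, sign and complementarity of $\bar\mu$, hence $f(\bar x) = f_*$ and $\eta(\bar x, \bar\lambda, \bar\mu) = 0$, just as in the paper.

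The genuine gap is your final step. You extract $\nabla_x L(\bar x, \bar\lambda, \bar\mu) = 0$ from the stationarity $\nabla_x \mathscr{L}(\bar x, \bar\lambda, \bar\mu, c) = 0$ via the gradient formula of Proposition~\ref{prp:Derivatives}, but that proposition requires $f$, $F$, $g_i$ to be \emph{twice} continuously Fr\'{e}chet differentiable and $\phi$ to be continuously differentiable, none of which is assumed in the lemma: the standing hypotheses give only $C^1$ data and a convex l.s.c.\ $\phi$, and since $\eta$ is built from first derivatives (see \eqref{eq:EtaDef}), $\nabla\eta$ --- hence $\nabla_x\mathscr{L}$ --- need not exist. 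The paper closes this step without any differentiation: one has the identity
\[
  \eta(\bar x, \lambda, \mu) = \frac{1}{2} \left\| \mathcal{E}(\bar x) \mathcal{E}(\bar x)^*
  \left( \begin{smallmatrix} \lambda \\ \mu \end{smallmatrix} \right)
  + \left( \begin{smallmatrix} DF(\bar x)[\nabla f(\bar x)] \\
  \nabla g(\bar x)[\nabla f(\bar x)] \end{smallmatrix} \right) \right\|^2 ,
\]
and since $\bar x$ has just been shown to be a globally optimal solution, $Q(\bar x)[\cdot]$ is positive definite by hypothesis, so $\mathcal{E}(\bar x)\mathcal{E}(\bar x)^*$ is invertible by Corollary~\ref{crlr:LICQvsPositiveDef} and $\eta(\bar x, \cdot) = 0$ has a \emph{unique} solution $(\lambda, \mu)$. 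Positive definiteness also guarantees (via Lemma~\ref{lem:LICQvsPositiveDef} and the cited Lusternik-type result) the existence of Lagrange multipliers $(\lambda_0, \mu_0)$ making $(\bar x, \lambda_0, \mu_0)$ a KKT-point; these annihilate $\eta$, so uniqueness forces $(\bar\lambda, \bar\mu) = (\lambda_0, \mu_0)$, and in particular $\nabla_x L(\bar x, \bar\lambda, \bar\mu) = 0$. Replacing your last paragraph by this algebraic argument makes the proof complete under the lemma's stated hypotheses; as written, your version proves the lemma only under the additional smoothness assumptions of Proposition~\ref{prp:Derivatives}.
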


\begin{proof}
Suppose that the optimal value of problem \eqref{eq:AuxiliaryProblem} is equal to $f_*$ for some $c > 0$. Our aim is to
show that for any $r > c$ points of global minimum of the augmented Lagrangian $\mathscr{L}(\cdot, r)$ are precisely
KKT-points of the problem $(\mathcal{P})$ corresponding to its globally optimal solutions. Then by definition 
$\mathscr{L}(x, \lambda, \mu, c)$ is globally exact.

Fix any $r > c$. From \eqref{eq:WeakDuality} and the fact that the function $\mathscr{L}(x, \lambda, \mu, c)$ is
non-decreasing in $c$ it follows that $\inf_{(x, \lambda, \mu)} \mathscr{L}(x, \lambda, \mu, r) = f_*$. Consequently,
for any globally optimal solution $x_*$ of the problem $(\mathcal{P})$ and the corresponding Lagrange multipliers
$\lambda_*$ and $\mu_*$ (which exist,
since $Q(x_*)$ is positive definite by our assumption; see, e.g. \cite[Thm.~1.1.4]{IoffeTihomirov} and
Lemma~\ref{lem:LICQvsPositiveDef}), the triplet $(x_*, \lambda_*, \mu_*)$ is a point of global minimum of
$\mathscr{L}(\cdot, r)$ by virtue of the fact that $\mathscr{L}(x_*, \lambda_*, \mu_*, r) = f(x_*) = f_*$. 

Suppose now that $(x_*, \lambda_*, \mu_*)$ is a point of global minimum of $\mathscr{L}(\cdot, r)$. Let us check that
$(x_*, \lambda_*, \mu_*)$ is a KKT-point and $x_*$ is a globally optimal solution of the problem $(\mathcal{P})$. 
Indeed, observe that the function
\begin{equation} \label{eq:AugmLagr_InequalityPart}
  \left\langle \mu, \max\left\{ g(x), -\frac{p(x, \mu)}{c} \mu \right\} \right\rangle
  + \frac{c}{2 p(x, \mu)} \left| \max\left\{ g(x), -\frac{1}{c} p(x, \mu) \mu \right\} \right|^2
\end{equation}
is nondecreasing in $c$. Consequently, if $F(x_*) \ne 0$, then
\begin{align*}
  \mathscr{L}(x_*, \lambda_*, &\mu_*, r) = f(x_*) 
  + \langle \lambda_*, F(x_*) \rangle  + \frac{r}{2} (1 + \| \lambda_* \|^2) \phi(\| F(x_*) \|^2)
  \\
  &+ \left\langle \mu_*, \max\left\{ g(x_*), -\frac{p(x_*, \mu_*)}{r} \mu_* \right\} \right\rangle
  \\
  &+ \frac{r}{2 p(x_*, \mu_*)} \left| \max\left\{ g(x_*), -\frac{p(x_*, \mu_*)}{r} \mu_* \right\} \right|^2
  + \eta(x_*, \lambda_*, \mu_*)
  \\
  &\ge \mathscr{L}(x_*, \lambda_*, \mu_*, c) + \frac{(r - c)}{2} (1 + \| \lambda_* \|^2) \phi(\|F(x_*)\|^2) 
  \\
  &\ge f_* + \frac{(r - c)}{2} (1 + \| \lambda_* \|^2) \phi(\|F(x_*)\|^2) > f_* = \inf_{(x, \lambda, \mu)}
  \mathscr{L}(x, \lambda, \mu, r).
\end{align*}
Therefore $F(x_*) = 0$. Arguing in a similar way and applying the fact that the function
\eqref{eq:AugmLagr_InequalityPart} is strictly increasing in $c$, if $\max\{ g(x), -\mu \} \ne 0$, one can easily
check that $\max\{ g(x_*), - \mu_* \} = 0$, which implies that $x_*$ is a feasible point of the problem $(\mathcal{P})$.
Hence one has 
\[
  f_* = \inf_{(x, \lambda, \mu)} \mathscr{L}(x, \lambda, \mu, r) = 
  \mathscr{L}(x_*, \lambda_*, \mu_*, r) = f(x_*) + \eta(x_*, \lambda_*, \mu_*),
\]
which implies that $f(x_*) = f_*$ and $\eta(x_*, \lambda_*, \mu_*) = 0$, since the function $\eta$ is nonnegative. Thus,
$x_*$ is a globally optimal solution of the problem $(\mathcal{P})$, and it remains to check that 
$(x_*, \lambda_*, \mu_*)$ is a KKT-point of the problem $(\mathcal{P})$.

Observe that for any $\lambda \in H$ and $\mu \in \mathbb{R}^m$ one has
\[  
  \eta(x_*, \lambda, \mu) = \frac{1}{2} \left\| \mathcal{E}(x_*) \mathcal{E}(x_*)^* 
  \left( \begin{smallmatrix} \lambda \\ \mu \end{smallmatrix} \right) 
  + \left( \begin{smallmatrix} DF(x_*)[\nabla f(x_*)] \\ \nabla g(x_*)[\nabla f(x_*)] \end{smallmatrix} \right)
  \right\|^2
\]
(see Corollary~\ref{crlr:LICQvsPositiveDef}). Therefore, $\eta(x_*, \lambda, \mu) = 0$ if and only if
\[
  \mathcal{E}(x_*) \mathcal{E}(x_*)^* \left( \begin{smallmatrix} \lambda \\ \mu \end{smallmatrix} \right) = 
  - \left( \begin{smallmatrix} DF(x_*)[\nabla f(x_*)] \\ \nabla g(x_*)[\nabla f(x_*)] \end{smallmatrix} \right).
\]
By Corollary~\ref{crlr:LICQvsPositiveDef} the operator $\mathcal{E}(x_*) \mathcal{E}(x_*)^*$ is invertible, which
implies that the equation above has a unique solution. Therefore, $\eta(x_*, \lambda, \mu) = 0$ if and only if 
$\lambda = \lambda_*$ and $\mu = \mu_*$.

As was noted above, from the fact that $Q(x_*)[\cdot]$ is positive definite it follows that there exists Lagrange
multipliers $\lambda_0 \in H$ and $\mu_0 \in \mathbb{R}^m$ such that $(x_*, \lambda_0, \mu_0)$ is a KKT-point. By the
definition of $\eta$ (see \eqref{eq:EtaDef}) one has $\eta(x_*, \lambda_0, \mu_0) = 0$, which implies that 
$\lambda_0 = \lambda_*$, $\mu_0 = \mu_*$, and $(x_*, \lambda_*, \mu_*)$ is a KKT-point.
\end{proof}

Let us obtain several types of sufficient conditions for the global exactness of the augmented Lagrangian 
$\mathscr{L}(x, \lambda, \mu, c)$. We start with, perhaps, the most general conditions based on the direct usage of 
Theorem~\ref{thrm:GradientEstimates}. For the sake of completeness, we will explicitly formulate all assumptions of the
following theorem, although most of them coincide with the assumptions of Theorem~\ref{thrm:GradientEstimates} with
$V = \Omega_r(f_* + \varepsilon)$.

\begin{theorem} \label{thrm:GlobalExactness}
Let the following assumptions be valid:
\begin{enumerate} 
\item{$f$, $F$, and $g_i$, $i \in M$, are twice continuously Fr\'{e}chet differentiable on $\Omega$, $\phi$ is
continuously differentiable on its effective domain, and $\phi'(0) > 0$;
\label{assump:GlobEx1}}

\item{the function $\mathscr{L}(\cdot, c)$ is weakly sequentially l.s.c. for all $c > 0$;}

\item{the set $\Omega_r(f_* + \varepsilon) = 
\{ x \in \Omega \mid f(x) + r( \| F(x) \|^2 + |\max\{ g(x), 0 \}|^2 ) \le f_* + \varepsilon \}$ 
is bounded for some $r > 0$ and $\varepsilon > 0$;
} 

\item{the functions $f$, $g_i$, $i \in M$, and $F$, as well as their first and second order Fr\'{e}chet derivatives, are
bounded on $\Omega_r(f_* + \varepsilon)$;
\label{assump:GlobEx4}}

\item{there exists $a > 0$ such that for all $x \in \Omega_r(f_* + \varepsilon)$ one has
\[
  Q(x)[\lambda, \mu] \ge a \big( \| \lambda \|^2 + |\mu|^2 \big)
  \quad \forall \lambda \in H, \: \mu \in \mathbb{R}^m.
\]
}

\vspace{-5mm}
\end{enumerate}
Then the augmented Lagrangian $\mathscr{L}(x, \lambda, \mu, c)$ is globally exact.\end{theorem}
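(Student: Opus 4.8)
The plan is to invoke Lemma~\ref{lem:GlobalExact_OptimalValue} and thereby reduce global exactness to the single assertion that the optimal value of the auxiliary problem \eqref{eq:AuxiliaryProblem} equals $f_*$ for some $c > 0$. First I would check that every globally optimal solution $x_*$ of $(\mathcal{P})$ lies in $\Omega_r(f_* + \varepsilon)$: since $x_*$ is feasible, $F(x_*) = 0$ and $\max\{g(x_*), 0\} = 0$, whence $b(x_*) = \psi(0) > 0$, $\phi(\|F(x_*)\|^2) = \phi(0) = 0 < +\infty$, and $f(x_*) = f_* \le f_* + \varepsilon$. Consequently the last assumption of the theorem guarantees that $Q(x_*)[\cdot]$ is positive definite at every global solution, so the hypothesis of Lemma~\ref{lem:GlobalExact_OptimalValue} is satisfied and it suffices to establish $\inf_{(x, \lambda, \mu)} \mathscr{L}(x, \lambda, \mu, c) = f_*$ for a suitable $c$.

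Next I would fix $\gamma = f_*$ and apply the ``in particular'' part of Theorem~\ref{thrm:GradientEstimates} with $V = \Omega_r(f_* + \varepsilon)$, whose hypotheses are precisely assumptions \ref{assump:GlobEx1}--\ref{assump:GlobEx4} together with the nonlocal constraint qualification. This yields, say for $K = 1$, a threshold $c_1 > 0$ such that for all $c \ge c_1$ and all $(x, \lambda, \mu) \in S_c(f_*)$ the gradient estimate \eqref{eq:GradEstim_InfesasibilityMes} holds. At the same time, Theorem~\ref{thrm:SublevelBoundedness} (whose assumptions are contained in ours) provides $c_2 > 0$ such that $S_c(f_*)$ is bounded for all $c \ge c_2$. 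I set $c_0 = \max\{c_1, c_2\}$ and fix any $c \ge c_0$.

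Then I would produce a global minimizer of \eqref{eq:AuxiliaryProblem}. The set $S_c(f_*)$ is nonempty, since it contains any KKT triple attached to a global solution (such a triple exists by the positive definiteness of $Q(x_*)[\cdot]$), and it is bounded. Taking a minimizing sequence, bounded sequences in the Hilbert space $X \times H \times \mathbb{R}^m$ admit weakly convergent subsequences, and the weak sequential lower semicontinuity of $\mathscr{L}(\cdot, c)$ shows that the weak limit $(x_c, \lambda_c, \mu_c)$ attains $\inf_{(x,\lambda,\mu)} \mathscr{L}(x, \lambda, \mu, c) \le f_*$ by \eqref{eq:WeakDuality}. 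As this infimum is finite, $x_c \in \Omega$, so by Proposition~\ref{prp:Derivatives} the augmented Lagrangian is differentiable at $(x_c, \lambda_c, \mu_c)$ and $\nabla \mathscr{L}(x_c, \lambda_c, \mu_c, c) = 0$.

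Finally I would read off feasibility from the gradient estimate. Because $(x_c, \lambda_c, \mu_c) \in S_c(f_*)$ and the gradient vanishes, \eqref{eq:GradEstim_InfesasibilityMes} forces $F(x_c) = 0$ and $\max\{g(x_c), -c^{-1} p(x_c, \mu_c) \mu_c\} = 0$. Substituting these into \eqref{eq:ExactAugmLagr} and using $\phi(0) = 0$ collapses the augmented Lagrangian to $\mathscr{L}(x_c, \lambda_c, \mu_c, c) = f(x_c) + \eta(x_c, \lambda_c, \mu_c)$. The vanishing of the max term gives $g_i(x_c) \le 0$ for every $i \in M$, so $x_c$ is feasible for $(\mathcal{P})$ and hence $f(x_c) \ge f_*$; combined with $\eta \ge 0$ this yields $\mathscr{L}(x_c, \lambda_c, \mu_c, c) \ge f_*$. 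Together with $\mathscr{L}(x_c, \lambda_c, \mu_c, c) = \inf_{(x,\lambda,\mu)} \mathscr{L}(x, \lambda, \mu, c) \le f_*$, this forces the optimal value to equal $f_*$, and Lemma~\ref{lem:GlobalExact_OptimalValue} completes the proof. I expect the construction of the minimizer to be the main obstacle: one must simultaneously secure nonemptiness and boundedness of $S_c(f_*)$ and then extract a weak limit that is interior to the effective domain $\Omega$, so that the stationarity condition $\nabla \mathscr{L} = 0$ may legitimately be invoked; everything else follows mechanically from Theorems~\ref{thrm:SublevelBoundedness} and~\ref{thrm:GradientEstimates}.
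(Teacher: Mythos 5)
Your proof is correct and follows essentially the same route as the paper's: combine Theorem~\ref{thrm:SublevelBoundedness} and the ``in particular'' part of Theorem~\ref{thrm:GradientEstimates} (with $\gamma = f_*$, $K = 1$, $V = \Omega_r(f_* + \varepsilon)$) to get a bounded sublevel set and the gradient estimate, obtain a global minimiser of $\mathscr{L}(\cdot, c)$ via weak sequential lower semicontinuity, read feasibility off the vanishing gradient, and conclude through Lemma~\ref{lem:GlobalExact_OptimalValue}. You merely make explicit a few details the paper leaves implicit, namely that global solutions lie in $\Omega_r(f_* + \varepsilon)$ so the lemma's positive-definiteness hypothesis holds, that $S_c(f_*)$ is nonempty, and that the weak limit lies in the open set $\Omega$ so stationarity may be invoked.
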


\begin{proof}
By Theorems~\ref{thrm:SublevelBoundedness} and \ref{thrm:GradientEstimates} there exists $c_* > 0$ such that 
for all $c \ge c_*$ the set $S_c(f_*)$ is bounded and
\begin{equation} \label{eq:GradEstimateViaInfeasibility}
  \big\| \nabla \mathscr{L}(x, \lambda, \mu, c) \big\| 
  \ge \| F(x) \| + \left|\max\left\{ g(x), - \frac{1}{c} p(x, \mu) \mu \right\} \right|
\end{equation}
for all $(x, \lambda, \mu) \in S_c(f_*)$. 

Taking into account inequality \eqref{eq:WeakDuality} and the facts that $\mathscr{L}(\cdot, c)$ is weakly sequentially
l.s.c., and $X$ and $H$ are Hilbert spaces, one can conclude that for any $c \ge c_*$ the function 
$\mathscr{L}(\cdot, c)$ attains a global minimum at a point $(x(c), \lambda(c), \mu(c)) \in S_c(f_*)$. From the fact
that the augmented Lagrangian is Fr\'{e}chet differentiable on its effective domain by Proposition~\ref{prp:Derivatives}
it follows that $\nabla \mathscr{L}(x(c), \lambda(c), \mu(c), c) = 0$ for all $c \ge c_*$. Therefore, by
\eqref{eq:GradEstimateViaInfeasibility} the point $x(c)$ is feasible for the problem $(\mathcal{P})$ and 
\[
  \mathscr{L}(x(c), \lambda(c) \mu(c), c) = f(x(c)) + \eta(x(c), \lambda(c), \mu(c)) \ge f(x(c)) \ge f_*
\]
for any $c \ge c_*$. Hence bearing in mind \eqref{eq:WeakDuality} and Lemma~\ref{lem:GlobalExact_OptimalValue} one can
conclude that the augmented Lagrangian $\mathscr{L}(x, \lambda, \mu, c)$ is globally exact.
\end{proof}

The most restrictive assumption of the previous theorem is, of course, the assumption on the uniform positive
definiteness of the quadratic function $Q(x)[\cdot]$ on the set $\Omega_r(f_* + \varepsilon)$, which can be viewed as a
uniform nonlocal constraint qualification or an assumption on the nonlocal metric regularity of constraints. In various
particular cases, one can significantly relax this assumption and replace it with a local constraint qualification.
Here we present two simple results of this kind, merely to illustrate how one can use a particular structure/properties
of the problem under consideration to relax the assumptions of Theorem~\ref{thrm:GlobalExactness}.

Being inspired by the ideas of Zaslavsky \cite{Zaslavski,Zaslavski2009,Zaslavski2013} on the theory of exact penalty
functions, first we strengthen Theorem~\ref{thrm:GlobalExactness} with the use of the Palais-Smale condition
\cite{MawhinWillem}. To introduce a suitable version of this condition, denote by $a_{\max}(Q(x))$ the supremum of all
$a \ge 0$ such that $Q(x)[\cdot]$ is positive definite with constant $a$. As is easily seen,
\[
  a_{\max}(Q(x)) = \inf\Big\{ Q(x)[\lambda, \mu] \Bigm| \| (\lambda, \mu) \| = 1 \Big\},
\]
and in the finite dimensional case $a_{\max}(Q(x))$ is simply the smallest eigenvalue of the matrix of the quadratic
form $Q(x)[\cdot]$. Moreover, with the use of the equality above and the definition of $Q(x)[\cdot]$ one can check that 
the value $a_{\max}(Q(x))$ \textit{continuously} depends on $x$.

\begin{definition} \label{def:PalaisSmale}
One says that the constraints of the problem $(\mathcal{P})$ satisfy the Palais-Smale condition, if every bounded
sequence $\{ x_n \} \subset X$, such that 
\begin{enumerate}
\item{$\| F(x_n) \| + |\max\{ g(x_n), 0 \}| \to 0$ as $n \to \infty$,}

\item{$a_{\max}(Q(x_n)) \to 0$ as $n \to \infty$,}
\end{enumerate}
has a convergent subsequence.
\end{definition}

To understand how the above definition is connected with the traditional Palais-Smale condition, consider the simplest
case when there are no inequality constraints and $H = \mathbb{R}$. Then, as one can readily verify,
\[
  Q(x)[\lambda] = \frac{1}{2} |\nabla F(x)|^4 \lambda^2, \quad 
  a_{\max}(Q(x)) = \frac{1}{2} |\nabla F(x)|^4.
\]
Therefore, in this case the constraint $F(x) = 0$ satisfies the Palais-Smale condition from Def.~\ref{def:PalaisSmale},
if every bounded sequence $\{ x_n \}$, such that $F(x_n) \to 0$ and $\| \nabla F(x_n) \| \to 0$ as $n \to \infty$, has a
convergent subsequence. This is the well-known localized Palais-Smale condition \cite{MawhinWillem}. 

With the use of the Palais-Smale condition from Def.~\ref{def:PalaisSmale} we can significantly relax the nonlocal
constraint qualification from Theorem~\ref{thrm:GlobalExactness}.

\begin{theorem} \label{thrm:GlobalExactness_PalaisSmale}
Let Assumption~\ref{assump:GlobEx1}--\ref{assump:GlobEx4} of Theorem~\ref{thrm:GlobalExactness} be valid, and suppose
that the constraints of the problem $(\mathcal{P})$ satisfy the Palais-Smale condition and the function $Q(x)[\cdot]$ is
positive definite at every globally optimal solution of the problem $(\mathcal{P})$. Then the augmented Lagrangian
$\mathscr{L}(x, \lambda, \mu, c)$ is globally exact.
\end{theorem}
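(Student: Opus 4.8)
The plan is to reduce everything, via Lemma~\ref{lem:GlobalExact_OptimalValue}, to showing that the optimal value of the auxiliary problem \eqref{eq:AuxiliaryProblem} equals $f_*$ for some large $c$; the hypothesis that $Q(x_*)[\cdot]$ is positive definite at every globally optimal $x_*$ is exactly what that lemma requires. Since $\inf\mathscr L(\cdot,c)\le f_*$ always by weak duality \eqref{eq:WeakDuality} and $\mathscr L$ is non-decreasing in $c$, it suffices to exhibit a large $c$ with $\inf\mathscr L(\cdot,c)\ge f_*$. The difficulty is that, having dropped the uniform positive definiteness of $Q$ on $\Omega_r(f_*+\varepsilon)$, one can no longer invoke Theorems~\ref{thrm:SublevelBoundedness} and~\ref{thrm:GradientEstimates} wholesale, and it is not even clear that $\mathscr L(\cdot,c)$ attains its infimum. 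So the first task is to recover boundedness of the sublevel set $S_c(f_*)$ for large $c$ by hand.

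To do this I would argue by contradiction: if $S_n(f_*)$ is unbounded for every $n$, pick $\xi_n=(x_n,\lambda_n,\mu_n)\in S_n(f_*)$ with $\|x_n\|+\|\lambda_n\|+|\mu_n|\ge n$. Lemma~\ref{lem:LowerEstimate} (with $\phi_0=\phi'(0)$) together with $\mathscr L(\xi_n,n)\le f_*$ forces $x_n\in\Omega_r(f_*+\varepsilon)$ for large $n$, hence $\{x_n\}$ is bounded, the infeasibility $\|F(x_n)\|+|\max\{g(x_n),0\}|\to0$, and $\|\lambda_n\|+|\mu_n|\to\infty$. Now split on the regularity measure $a_{\max}(Q(x_n))$. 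If $\liminf a_{\max}(Q(x_n))>0$, the quadratic part of $\eta$ (written as in the proof of Theorem~\ref{thrm:SublevelBoundedness}, $\eta=Q(x)[\lambda,\mu]+\langle Q_{1,\lambda},\lambda\rangle+\langle Q_{1,\mu},\mu\rangle+Q_0$ with $Q_{1,\lambda},Q_{1,\mu},Q_0$ bounded on $\Omega_r(f_*+\varepsilon)$) dominates and $\mathscr L(\xi_n,n)\to+\infty$, contradicting $\le f_*$. If instead $a_{\max}(Q(x_{n_k}))\to0$ along a subsequence, the Palais--Smale condition from Def.~\ref{def:PalaisSmale} yields $x_{n_k}\to x_*$; by continuity $x_*$ is feasible and, since Lemma~\ref{lem:LowerEstimate} gives $f(x_{n_k})\le f_*+o(1)$, it is globally optimal, whereas continuity of $a_{\max}(Q(\cdot))$ forces $a_{\max}(Q(x_*))=0$ --- contradicting positive definiteness of $Q(x_*)[\cdot]$. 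Thus $S_c(f_*)$ is bounded for all large $c$.

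With bounded sublevel sets and weak sequential lower semicontinuity, $\mathscr L(\cdot,c_n)$ attains a global minimum at some $\xi_n=(x_n,\lambda_n,\mu_n)\in S_{c_n}(f_*)$ for $c_n=n\to\infty$, where $\nabla\mathscr L(\xi_n,c_n)=0$ by Proposition~\ref{prp:Derivatives}. Running the same dichotomy on $a_{\max}(Q(x_n))$: the degenerate branch is ruled out exactly as above (Palais--Smale produces a globally optimal limit with $a_{\max}(Q(x_*))=0$), so for large $n$ one has $a_{\max}(Q(x_n))\ge a_0>0$ and $(\lambda_n,\mu_n)$ confined to a bounded set $\Lambda$. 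Taking $V=\{x_n\}$, all hypotheses of Lemma~\ref{lemma:SepGradientEstimates} hold, and because $\nabla_\lambda\mathscr L(\xi_n,c_n)=\nabla_\mu\mathscr L(\xi_n,c_n)=0$ the premise \eqref{eq:MultipliersGradEstimate} is satisfied trivially; hence $\|\nabla_x\mathscr L(\xi_n,c_n)\|\ge c_n\varkappa\big(\|F(x_n)\|+|\max\{g(x_n),-\tfrac{1}{c_n}p(x_n,\mu_n)\mu_n\}|\big)$. Since the left-hand side is zero, $x_n$ is feasible, whence $\mathscr L(\xi_n,c_n)=f(x_n)+\eta(x_n,\lambda_n,\mu_n)\ge f_*$ by \eqref{eq:ExactAugmLagr}; combined with $\mathscr L(\xi_n,c_n)=\inf\mathscr L(\cdot,c_n)\le f_*$ this gives $\inf\mathscr L(\cdot,c_n)=f_*$, and Lemma~\ref{lem:GlobalExact_OptimalValue} finishes the proof.

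The main obstacle is the regular branch of the second dichotomy: without global uniform positive definiteness one cannot apply Theorem~\ref{thrm:GradientEstimates} directly, and the crux is realizing that the estimate of Lemma~\ref{lemma:SepGradientEstimates} can be localized to the (bounded) set $V=\{x_n\}$ on which positive definiteness holds with a uniform constant $a_0$, the multiplier iterates already lying in a bounded $\Lambda$ thanks to the boundedness of $S_c(f_*)$. The factor $c_n$ in that $x$-gradient estimate is essential: it is what turns $\nabla_x\mathscr L(\xi_n,c_n)=0$ into \emph{exact} feasibility of $x_n$ rather than merely asymptotic feasibility, which is precisely what lets the value argument close without any Lipschitz or first-order comparison of $f(x_n)$ with $f_*$.
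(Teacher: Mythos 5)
Your proof is correct and takes essentially the same route as the paper's: the identical two-part structure of first establishing boundedness of $S_c(f_*)$ by the dichotomy on $a_{\max}(Q(x_n))$ combined with the Palais--Smale condition, then running the same dichotomy on the global minimisers $\xi_n$ of $\mathscr{L}(\cdot, c_n)$, localizing the gradient estimate to $V = \{x_n\}$ with bounded $\Lambda$ to force exact feasibility, and closing via Lemma~\ref{lem:GlobalExact_OptimalValue}. The only cosmetic difference is that you invoke Lemma~\ref{lemma:SepGradientEstimates} directly (with the premise \eqref{eq:MultipliersGradEstimate} trivially satisfied at a stationary point) where the paper applies Theorem~\ref{thrm:GradientEstimates} with $K = 1$; note that the latter already yields exact, not merely asymptotic, feasibility, so the factor $c_n$ is not as essential as your closing remark suggests.
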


\begin{proof}
We divide the proof of the theorem into two parts. First we show that the sublevel set $S_c(f_*)$ is bounded for any
sufficiently large $c$, and then with the use of Theorem~\ref{thrm:GradientEstimates} and the Palais-Smale conditions
we will prove the global exactness of the augmented Lagrangian.

\textbf{Part~1.} Arguing by reductio ad absurdum, suppose that the set $S_c(f_*)$ is unbounded for any $c > 0$. Then
for any increasing unbounded sequence $\{ c_n \} \subset (0, + \infty)$ one can find 
$(x_n, \lambda_n, \mu_n) \in S_{c_n}(f_*)$, $n \in \mathbb{N}$, such that $\| x_n \| + \| \lambda_n \| + |\mu_n| \ge n$.

As was shown in the proof of Theorem~\ref{thrm:SublevelBoundedness}, there exists $\widehat{c} > 0$ such that for any
$c \ge \widehat{c}$ and $(x, \lambda, \mu) \in S_c(f_*)$ one has $x \in \Omega_r(f_* + \varepsilon)$. Therefore, the
sequence $\{ x_n \}$ is bounded and $\| \lambda_n \| + |\mu_n| \to + \infty$ as $n \to \infty$.

Suppose that there exist $a > 0$ and a subsequence $\{ x_{n_k} \}$ such that for all $k \in \mathbb{N}$ one has
$a_{\max}(Q(x_{n_k})) \ge a$ . Then replacing the sequence $\{ x_n \}$ with this subsequence and almost literally
repeating the proof of Theorem~\ref{thrm:SublevelBoundedness} one check that the condition 
$\| \lambda_n \| + |\mu_n| \to + \infty$ as $n \to \infty$ contradicts the assumption that
$(x_n, \lambda_n, \mu_n) \in S_c(f_*)$.

Thus, without loss of generality one can suppose that $a_{\max}(Q(x_n)) \to 0$ as $n \to \infty$. By
Lemma~\ref{lem:LowerEstimate} and the definition of $(x_n, \lambda_n, \mu_n)$ one has
\[
  f(x_n) + \frac{c_n}{2} \phi(\| F(x_n) \|^2) + \frac{c_n}{2 \psi(0)} \big| \max\{ g(x_n), 0 \} \big|^2
  \le f_* + \frac{1}{2 c_n \phi_0} + \frac{(1 + m)\psi(0)}{2c_n}.
\]
Therefore, as one can readily verify, one has
\begin{equation} \label{eq:PSLimitCond}
  \lim_{n \to \infty} \Big( \| F(x_n) \| + \big| \max\{ g(x_n), 0 \} \big| \Big) = 0, \quad
  \limsup_{n \to \infty} f(x_n) \le f_*.
\end{equation}
Hence by the Palais-Smale condition one can extract a subsequence $\{ x_{n_k} \}$ converging to some point
$x_*$, which is obviously feasible and satisfies the inequality $f(x_*) \le f_*$. Consequently, $x_*$ is a globally
optimal solution of the problem $(\mathcal{P})$, which by our assumption implies that $a_* := a_{\max}(Q(x_*)) > 0$. As
was noted above, the function $a_{\max}(Q(\cdot))$ is continuous. Therefore, $a_{\max}(Q(x_{n_k})) \ge a_* / 2$ for all
sufficiently large $k$, which contradicts the fact that $a_{\max}(Q(x_n)) \to 0$ as $n \to \infty$. Thus, there exists
$c_0 > 0$ such that the set $S_c(f_*)$ is bounded for all $c \ge c_0$.

\textbf{Part~2.} Bearing in mind inequality \eqref{eq:WeakDuality}, and the facts that $\mathscr{L}(\cdot, c)$ is weakly
sequentially l.s.c. and the set $S_c(f_*)$ is bounded for all $c \ge c_0$, one can conclude that the augmented
Lagrangian $\mathscr{L}(\cdot, c)$ attains a global minimum at a point $(x(c), \lambda(c), \mu(c)) \in S_c(f_*)$ for
any $c \ge c_0$. 

Choose an increasing unbounded sequence $\{ c_n \} \subset [c_0, + \infty)$, and denote
$\xi_n = (x_n, \lambda_n, \mu_n) = (x(c_n), \lambda(c_n), \mu(c_n))$. Observe that the sequence
$\{ (x_n, \lambda_n, \mu_n) \} \subset S_c(f_*)$ is bounded, since $S_c(f_*) \subseteq S_{c_0}(f_*)$ for any
$c \ge c_0$ due to the fact that the function $\mathscr{L}(x, \lambda, \mu, c)$ is nondecreasing in $c$.
Note that increasing $c_0$, if necessary, one can suppose that $\{ x_n \} \subset \Omega_r(f_* + \varepsilon)$ for all
$n \in \mathbb{N}$, thanks to Lemma~\ref{lem:LowerEstimate}.

Suppose, at first, that there exist $a > 0$ and a subsequence $\{ x_{n_k} \}$ such that 
$a_{\max}(Q(x_{n_k})) \ge a$ for all $k \in \mathbb{N}$. Then setting $V = \{ x_{n_k} \}$, 
$\Lambda = \{ (\lambda_{n_k}, \mu_{n_k}) \}$, $\gamma = f_*$, and $K = 1$ in Theorem~\ref{thrm:GradientEstimates},
one obtains that there exists $k_0 \in \mathbb{N}$ such that for any $k \ge k_0$ the following inequality holds true:
\begin{equation} \label{eq:PS_ALGradSeq}
  0 = \big\| \nabla \mathscr{L}(\xi_{n_k}, c_{n_k}) \big\| 
  \ge \| F(x_{n_k}) \| 
  + \left|\max\left\{ g(x_{n_k}), - \frac{p(x_{n_k}, \mu_{n_k})}{c_{n_k}} \mu_{n_k} \right\} \right|
\end{equation}
(the first equality follows from the fact that $\xi_{n_k} = (x_{n_k}, \lambda_{n_k}, \mu_{n_k})$ is a point of global
minimum of $\mathscr{L}(\cdot, c_{n_k})$ by definition). Hence arguing in the same way as in the proof of
Theorem~\ref{thrm:GlobalExactness}, one can conclude that the augmented Lagrangian is globally exact.

Thus, one can suppose that the sequence $\{ a_{\max}(Q(x_n)) \}$ does not contain a subsequence that is bounded away
from zero. Hence without loss of generality one can assume that $a_{\max}(Q(x_n)) \to 0$ as $n \to \infty$.

Applying the lower estimate of the augmented Lagrangian from Lemma~\ref{lem:LowerEstimate} and the fact that
$(x_n, \lambda_n, \mu_n) \in S_{c_n}(f_*)$, one can easily check that
\[
  \lim_{n \to \infty} \Big( \| F(x_n) \| + \big|\max\{ g(x_n), 0 \}\big| \Big) = 0, \quad
  \limsup_{n \to \infty} f(x_n) \le f_*.
\]
Consequently, by the Palais-Smale condition there exists a subsequence $\{ x_{n_k} \}$ converging to a point $x_*$,
which is obviously a globally optimal solution of the problem $(\mathcal{P})$. Therefore, by our assumption 
$Q(x_*)[\cdot]$ is positive definite, and due to the continuity of the function $a_{\max}(Q(\cdot))$ there exist 
$a_* > 0$ and $k_* \in \mathbb{N}$ such that $a_{\max}(Q(x_{n_k})) \ge a_*$ for all $k \ge k_*$, which contradicts our
assumption that $a_{\max}(Q(x_n)) \to 0$ as $n \to \infty$.
\end{proof}

Let us also consider another way one can relax the nonlocal constraint qualification from
Theorem~\ref{thrm:GlobalExactness} with the use of a particular structure of the constraints, which can be applied, e.g.
to variational problems with nonlinear constraints at the boundary of the domain. Namely, let $Y$ be a real Hilbert
space, and suppose that the constraints of the problem $(\mathcal{P})$ have the form
\begin{equation} \label{eq:CompactEmbConstrants}
  F(x) = F_0(\mathcal{A}x), \quad g(x) = g_0(\mathcal{A}x) \quad \forall x \in X,
\end{equation}
where $F_0 \colon Y \to H$ and $g_0 \colon Y \to \mathbb{R}^m$ are continuously differentiable nonlinear maps,
while $\mathcal{A} \colon X \to Y$ is a \textit{compact} linear operator. Thus, the constraints are defined via a
compact embedding of the space $X$ into another Hilbert space $Y$.

For any $y$ introduce the function
\begin{align*}
  Q_0(y)[\lambda, \mu] 
  &= \frac{1}{2} \Big\| D F_0(y)\Big[ D F_0(y)^*[\lambda] + \sum_{i = 1}^m \mu_i \nabla g_{0i}(y) \Big] \Big\|^2
  \\
  &+ \frac{1}{2} \Big| \nabla g_0(y) \Big( D F_0(y)^*[\lambda] + \sum_{i = 1}^m \mu_i \nabla g_{0i}(y) \Big) 
  + \diag(g_{0i}(y)^2) \mu \Big|^2,
\end{align*}
which is a modification of the function $Q(x)[\cdot]$ to the case of the constraints
\[
  F_0(y) = 0, \quad g_{0i}(y) \le 0, \quad i \in M.
\]
It is convenient to formulate sufficient conditions for the global exactness of the augmented Lagrangian for the
problem under consideration in terms of the function $Q_0(y)$.

\begin{theorem}
Let the following assumptions be valid:
\begin{enumerate} 
\item{$f$, $F_0$, and $g_0$ are twice continuously Fr\'{e}chet differentiable, $\phi$ is continuously differentiable on
its effective domain, and $\phi'(0) > 0$;}

\item{the functions $f$ and $\mathscr{L}(\cdot, c)$, $c > 0$, are weakly sequentially l.s.c.;}

\item{the set $\Omega_r(f_* + \varepsilon)$ is bounded for some $r > 0$ and $\varepsilon > 0$;} 

\item{the functions $f$, $g_i$, $i \in M$, and $F$, as well as their first and second order Fr\'{e}chet derivatives, are
bounded on $\Omega_r(f_* + \varepsilon)$;}

\item{the operator $\mathcal{A} \mathcal{A}^*$ is the identity map;}

\item{for any globally optimal solution $x_*$ of the problem $(\mathcal{P})$ the function $Q_0(y)[\cdot]$ is positive
definite at the point $y = \mathcal{A} x_*$.}
\end{enumerate}
Then the augmented Lagrangian $\mathscr{L}(x, \lambda, \mu, c)$ is globally exact.
\end{theorem}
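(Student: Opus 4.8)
The plan is to mirror the proof of Theorem~\ref{thrm:GlobalExactness_PalaisSmale}, replacing the role of the Palais--Smale condition by the compactness of the operator $\mathcal{A}$. The crucial observation, which links the hypotheses on $Q_0$ to the quadratic form $Q$ appearing in all earlier results, is that under the assumption $\mathcal{A}\mathcal{A}^* = I$ the form $Q(x)[\cdot]$ factors through $\mathcal{A}$:
\[
  Q(x)[\lambda, \mu] = Q_0(\mathcal{A}x)[\lambda, \mu] \quad \forall x \in X, \: \lambda \in H, \: \mu \in \mathbb{R}^m.
\]
Indeed, the chain rule applied to \eqref{eq:CompactEmbConstrants} gives $D F(x) = D F_0(\mathcal{A}x)\mathcal{A}$, hence $D F(x)^* = \mathcal{A}^* D F_0(\mathcal{A}x)^*$, and $\nabla g_i(x) = \mathcal{A}^*\nabla g_{0i}(\mathcal{A}x)$. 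Writing $y = \mathcal{A}x$ and $v = D F_0(y)^*[\lambda] + \sum_{i=1}^m \mu_i \nabla g_{0i}(y)$, one finds $D F(x)^*[\lambda] + \sum_{i=1}^m \mu_i \nabla g_i(x) = \mathcal{A}^* v$; applying $D F(x)$ and $\nabla g(x)$ to this vector and using $\mathcal{A}\mathcal{A}^* = I$ collapses $D F(x)[\mathcal{A}^* v] = D F_0(y)[v]$ and $\nabla g(x)(\mathcal{A}^* v) = \nabla g_0(y)(v)$. Since $g_i(x) = g_{0i}(y)$, substituting into \eqref{eq:QuadFormDef} yields the claimed identity. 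In particular $a_{\max}(Q(x)) = a_{\max}(Q_0(\mathcal{A}x))$, and $a_{\max}(Q_0(\cdot))$ depends continuously on its argument by the same reasoning that establishes the continuity of $a_{\max}(Q(\cdot))$.

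The first step is to prove that $S_c(f_*)$ is bounded for all sufficiently large $c$. Arguing by contradiction as in Part~1 of the proof of Theorem~\ref{thrm:GlobalExactness_PalaisSmale}, I would obtain a sequence $(x_n, \lambda_n, \mu_n) \in S_{c_n}(f_*)$ with $c_n \to \infty$, $x_n \in \Omega_r(f_* + \varepsilon)$, and $\|\lambda_n\| + |\mu_n| \to \infty$, for which \eqref{eq:PSLimitCond} holds. Since $\Omega_r(f_* + \varepsilon)$ is bounded, I may pass to a subsequence with $x_n \rightharpoonup x_*$ weakly in $X$; by the complete continuity of the compact operator $\mathcal{A}$, $\mathcal{A}x_n \to \mathcal{A}x_*$ strongly in $Y$. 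Passing to the limit in $F(x_n) = F_0(\mathcal{A}x_n)$ and $g(x_n) = g_0(\mathcal{A}x_n)$ via \eqref{eq:PSLimitCond} and the continuity of $F_0$, $g_0$ shows that $x_*$ is feasible, while weak sequential lower semicontinuity of $f$ gives $f(x_*) \le \liminf_n f(x_n) \le f_*$, so $x_*$ is globally optimal. By hypothesis $Q_0(\mathcal{A}x_*)[\cdot]$ is positive definite, so by the identity above and the continuity of $a_{\max}(Q_0(\cdot))$ we obtain $a_{\max}(Q(x_n)) = a_{\max}(Q_0(\mathcal{A}x_n)) \to a_{\max}(Q_0(\mathcal{A}x_*)) > 0$ along the subsequence. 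Thus $Q(x_n)[\cdot]$ is uniformly positive definite along this subsequence, and repeating the estimate from the proof of Theorem~\ref{thrm:SublevelBoundedness} contradicts $\|\lambda_n\| + |\mu_n| \to \infty$.

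With the boundedness of $S_c(f_*)$ secured, the conclusion follows Part~2 of Theorem~\ref{thrm:GlobalExactness_PalaisSmale}. Weak sequential lower semicontinuity of $\mathscr{L}(\cdot, c)$ together with boundedness of $S_c(f_*)$ ensure that $\mathscr{L}(\cdot, c)$ attains a global minimum at some stationary point $(x(c), \lambda(c), \mu(c)) \in S_c(f_*)$ for each large $c$. Choosing $c_n \to \infty$ and applying the compactness of $\mathcal{A}$ to this bounded minimizing sequence, I would extract a subsequence with $\mathcal{A}x(c_{n_k}) \to \mathcal{A}x_*$ for a globally optimal $x_*$; by the identity and continuity of $a_{\max}(Q_0(\cdot))$, the values $a_{\max}(Q(x(c_{n_k})))$ are bounded away from zero. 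Applying Theorem~\ref{thrm:GradientEstimates} with $V = \{x(c_{n_k})\}$ and $K = 1$ then yields the gradient estimate \eqref{eq:GradEstim_InfesasibilityMes}; since the gradient vanishes at a global minimizer, $x(c_{n_k})$ is feasible and $\mathscr{L}(x(c_{n_k}), \lambda(c_{n_k}), \mu(c_{n_k}), c_{n_k}) = f(x(c_{n_k})) + \eta(x(c_{n_k}), \lambda(c_{n_k}), \mu(c_{n_k})) \ge f_*$. Combined with \eqref{eq:WeakDuality} and Lemma~\ref{lem:GlobalExact_OptimalValue}, this establishes global exactness. The main obstacle is the verification of the factorization identity $Q(x)[\cdot] = Q_0(\mathcal{A}x)[\cdot]$ and the careful passage from weak convergence of $\{x_n\}$ to strong convergence of $\{\mathcal{A}x_n\}$; once these are in place, the argument reduces entirely to the already-established reasoning of Theorem~\ref{thrm:GlobalExactness_PalaisSmale}.
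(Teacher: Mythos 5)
Your proposal is correct and follows essentially the same route as the paper's own proof: you establish the factorization $Q(x)[\cdot] = Q_0(\mathcal{A}x)[\cdot]$ via $\mathcal{A}\mathcal{A}^* = I$, use the compactness of $\mathcal{A}$ to upgrade weak convergence of $\{x_n\}$ to strong convergence of $\{\mathcal{A}x_n\}$ (in place of the Palais--Smale condition), prove boundedness of $S_c(f_*)$ by the same contradiction argument, and conclude via Theorem~\ref{thrm:GradientEstimates} applied along the minimizing sequence together with Lemma~\ref{lem:GlobalExact_OptimalValue}. The two-part structure, the continuity argument for $a_{\max}(Q_0(\cdot))$, and the final feasibility-plus-$\eta \ge 0$ step all match the paper's proof.
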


\begin{proof}
We split the proof of this theorem into two parts, in precisely the same way as the proof of
Theorem~\ref{thrm:GlobalExactness_PalaisSmale}.

\textbf{Part~1.} Let us prove that under the assumptions of the theorem the sublevel set $S_c(f_*)$ is bounded for any
sufficiently large $c$. Arguing by reductio ad absrudum, suppose that this claim is false. Then, just like in the
proofs of Theorems~\ref{thrm:SublevelBoundedness} and \ref{thrm:GlobalExactness_PalaisSmale}, one can show that for any
$n \in \mathbb{N}$ there exists $(x_n, \lambda_n, \mu_n) \in S_n(f_*)$ such that the sequence $\{ x_n \}$ is bounded,
but $\| \lambda_n \| + |\mu_n| \to + \infty$ as $n \to \infty$.

From the facts that $X$ is a Hilbert space and the sequence $\{ x_n \}$ is bounded it follows that one can extract a
subsequence $\{ x_{n_k} \}$ that weakly converges to some point $x_*$. Since the operator $\mathcal{A}$ is compact,
the sequence $\{ \mathcal{A} x_{n_k} \}$ converges to $\mathcal{A} x_*$ in the norm topology. 

With the use of Lemma~\ref{lem:LowerEstimate} one can readily verify that 
\[
  \lim_{n \to \infty} \Big( \| F(x_n) \| + \big| \max\{ g(x_n), 0 \} \big| \Big) = 0, \quad
  \limsup_{n \to \infty} f(x_n) \le f_*.
\]
Hence taking into account the facts that $f$ is weakly sequentially l.s.c., and $\mathcal{A} x_{n_k}$ converges to
$\mathcal{A} x_*$ in the norm topology one can conclude that $x_*$ is a globally optimal solution of the problem
$(\mathcal{P})$. Thus, $Q_0(\mathcal{A} x_*)$ is positive definite. Therefore, there exists $k_0 \in \mathbb{N}$ such
that
\[
  a_{\max}(Q_0(\mathcal{A} x_{n_k})) \ge \frac{a_*}{2} \quad \forall k \ge k_0, \quad
  a_* := a_{\max}(Q_0(\mathcal{A} x_*)) > 0,
\]
due to the facts that the function $a_{\max}(Q_0(\cdot))$ is continuous in the norm topology and $\mathcal{A} x_{n_k}$
strongly converges to $\mathcal{A} x_*$.

From the definition of $Q(x)[\cdot]$ (see equality \eqref{eq:QuadFormDef} on page~\pageref{eq:QuadFormDef}), equalities
\eqref{eq:CompactEmbConstrants}, and the fact that $\mathcal{A} \mathcal{A}^*$ is the identity map it follows that
\begin{multline*}
  Q(x)[\lambda, \mu]
  = \frac{1}{2} \bigg\| D F_0(\mathcal{A} x) \Big( \mathcal{A} \mathcal{A}^*
  \Big[ D F_0(\mathcal{A} x)^*[\lambda] + \sum_{i = 1}^m \mu_i \nabla g_{0i}(\mathcal{A} x) \Big]\Big) \bigg\|^2
  \\
  + \frac{1}{2} \bigg| \nabla g_0(\mathcal{A} x) \Big[ \mathcal{A} \mathcal{A}^*
  \Big( D F_0(\mathcal{A} x)^*[\lambda] + \sum_{i = 1}^m \mu_i \nabla g_{0i}(\mathcal{A} x) \Big) \Big]
  + \diag(g_{0i}(\mathcal{A} x_*)^2) \mu \bigg|^2
  \\
  = Q_0(\mathcal{A} x)[\lambda, \mu]
\end{multline*}
for all $x \in X$, $\lambda \in H$ and $\mu \in \mathbb{R}^m$. Consequently, $a_{\max}(Q(x_{n_k})) \ge a_* / 2$ for all 
$k \ge k_0$. Hence applying Lemma~\ref{lem:LowerEstimate} one obtains that
\begin{align*}
  \mathscr{L}(x_{n_k}, \lambda_{n_k}, \mu_{n_k}, n_k) 
  &\ge f(x_{n_k}) - \frac{1}{2 n_k \phi'(0)} - \frac{(1 + m)\psi(0)}{2n_k} + \eta(x_{n_k}, \lambda_{n_k}, \mu_{n_k})
  \\
  &\ge f(x_{n_k}) - \frac{1}{2 n_k \phi'(0)} - \frac{(1 + m)\psi(0)}{2n_k} 
  + \frac{a_*}{2} \big\| (\lambda_{n_k}, \mu_{n_k}) \big\|^2 
  \\
  &- \| Q_{1, \lambda}(x_{n_k}) \| \| \lambda_{n_k} \|
  - \big| Q_{1, \mu}(x_{n_k}) \big| |\mu_{n_k}| + Q_0(x_{n_k})
\end{align*}
for all $k \ge k_0$ (here we used the same notation as in the proof of Theorem~\ref{thrm:SublevelBoundedness}). As was
noted multiple times above, $x_n \in \Omega_r (f_* + \varepsilon)$ for any sufficient large $n$. Consequently, 
the quantities $\| Q_{1, \lambda}(x_{n_k}) \|$, $|Q_{1, \mu}(x_{n_k})|$, and $Q_0(x_{n_k})$ are bounded due to our
assumption on the boundedness of all functions and their derivatives on $\Omega_r (f_* + \varepsilon)$. Hence one
gets that $\mathscr{L}(x_{n_k}, \lambda_{n_k}, \mu_{n_k}, n_k) \to + \infty$ as $k \to \infty$, which contradicts
the fact that by definition $\mathscr{L}(x_n, \lambda_n, \mu_n, n) \le f_*$ for all $n \in \mathbb{N}$. Thus, there
exists $c_0 > 0$ such that the sublevel set $S_c(f_*)$ is bounded for all $c \ge c_0$.

\textbf{Part~2.} Let us now prove the global exactness of the augmented Lagrangian. Choose an increasing unbounded
sequence $\{ c_n \} \subseteq [c_0, + \infty)$. From the fact that the sublevel set $S_c(f_*)$ is bounded for all 
$c \ge c_0$ and the augmented Lagrangian is weakly sequentially l.s.c. it follows that for any $n \in \mathbb{N}$
the function $\mathscr{L}(\cdot, c_n)$ attains a global minimum at a point $(x_n, \lambda_n, \mu_n) \in S_{c_n}(f_*)$.

Note that the sequence $\{ (x_n, \lambda_n, \mu_n) \}$ is bounded, due to the fact that 
$S_c(f_*) \subseteq S_{c_0}(f_*)$ for all $c \ge c_0$ . Therefore, replacing, if necessary, this sequence with a
subsequence, one can suppose that the sequence $\{ x_n \}$ weakly converges to some point $x_*$. The corresponding
sequence $\{ \mathcal{A} x_n \}$ strongly converges to $\mathcal{A} x_*$ due to the fact that the operator $\mathcal{A}$
is compact. Hence with the use of the lower estimate from Lemma~\ref{lem:LowerEstimate} and the fact that 
$(x_n, \lambda_n, \mu_n) \in S_{c_n}(f_*)$ one can easily verify that $x_*$ is a globally optimal solution of the
problem $(\mathcal{P})$. Consequently, the function $Q_0(\mathcal{A} x_*)[\cdot] = Q(x_*)[\cdot]$ is positive definite,
and one can find $a > 0$ and $n_0 \in \mathbb{N}$ such that 
\[
  a_{\max}(Q(x_n)) = a_{\max}(Q_0(\mathcal{A} x_n)) \ge a \quad \forall n \ge n_0,
\]
due to the facts that the map $a_{\max}(Q_0(\cdot))$ is contiuous and $\mathcal{A} x_n$ strongly converges to
$\mathcal{A} x_*$.

Now, applying Theorem~\ref{thrm:GradientEstimates} with $V = \{ x_n \}_{n \ge n_0}$ (one can obviously suppose that 
$\{ x_n \}_{n \ge n_0} \subset \Omega_r(f_* + \varepsilon)$), $\Lambda = \{ (\lambda_n, \mu_n) \}_{n \ge n_0}$, 
$\gamma = f_*$, and $K = 1$ one obtains that there exists $N \ge n_0$ such that for any $n \ge N$ one has
\[
  0 = \big\| \nabla \mathscr{L}(x_n, \lambda_n, \mu_n, c_n) \big\| 
  \ge \| F(x_n) \| + \left|\max\left\{ g(x_n), - \frac{p(x_n, \mu_n)}{c_n} \mu_n \right\} \right|.
\]
Therefore, for any $n \ge N$ the point $x_n$ is feasible for the problem $(\mathcal{P})$ and
\[
  f_* \ge \mathscr{L}(x_n, \lambda_n, \mu_n, c_n) = f(x_n) + \eta(x_n, \lambda_n, \mu_n) \ge f(x_n) \ge f_*.
\]
Hence by Lemma~\ref{lem:GlobalExact_OptimalValue} the augmented Lagrangian is globally exact.
\end{proof}

\begin{remark}
Let us point out one important particular case in which the assumption of the previous theorem that 
$\mathcal{A} \mathcal{A}^*$ is the identity map holds true. Namely, let $X$ be the Sobolev space 
$H^1([a, b]; \mathbb{R}^d)$ of vector-valued functions $x \colon [a, b] \to \mathbb{R}^d$ endowed with the inner
product
\[
  \langle x, y \rangle_X = \langle x(a), y(a) \rangle + \langle x(a) + x(b), y(a) + y(b) \rangle
  + (b - a) \int_a^b \langle \dot{x}(t), \dot{y}(t) \rangle \, dt
\]
and the corresponding norm, which is equivalent to the standard norm on $H^1([a, b]; \mathbb{R}^d)$. Suppose also that
$Y$ is the space $\mathbb{R}^d \times \mathbb{R}^d$ endowed with the inner product
\[
  \langle (x_1, x_2), (y_1, y_2) \rangle_Y = 3 \langle x_1, y_1 \rangle + 2 \langle x_2, y_2 \rangle
  \quad \forall (x_1, x_2), (y_1, y_2) \in Y. 
\]
Let $\mathcal{A} x = (x(a), x(b))$. In this case, the constraints $F(x) = 0$ and $g(x) \le 0$ restrict the values of 
the function $x$ at the boundary points $t = a$ and $t = b$. As is easy seen, one has
\[
  \Big( \mathcal{A}^* (y_1, y_2) \Big)(t) = y_1 + (y_2 - y_1) \frac{t - a}{b - a}
  \quad \forall t \in [a, b], \: (y_1, y_2) \in Y,
\]
since for all $x \in X$ and $y = (y_1, y_2) \in Y$ the following equalities hold true:
\begin{align*}
  \langle \mathcal{A}^* y, x \rangle_X &= \langle y_1, x(a) \rangle + \langle y_1 + y_2, x(a) + x(b) \rangle
  + \int_a^b \langle y_2 - y_1, \dot{x}(t) \rangle \, dt
  \\
  &= \langle y_1, x(a) \rangle + \langle y_1 + y_2, x(a) + x(b) \rangle + \langle y_2 - y_1, x(b) - x(a) \rangle
  \\
  &= 3 \langle y_1, x(a) \rangle + 2 \langle y_2, x(b) \rangle = \langle y, \mathcal{A} x \rangle_Y.
\end{align*}
It remains to note that in this case $\mathcal{A} \mathcal{A}^*$ is indeed the identity map. Moreover, the operator
$\mathcal{A}$ is obviously compact, which allows one to apply the previous theorem to corresponding problems.
\end{remark}

\subsection{Complete exactness}

In many cases, optimization methods can find only points of local minimum or even only stationary (critical) points of a
nonconvex function. Therefore, apart from global exactness, it is important to have conditions ensuring that not only
points of global minimum of the augmented Lagrangian $\mathscr{L}(x, \lambda, \mu, c)$ correspond to points of global
minimum of the original problem $(\mathcal{P})$, but also points of local minimum/stationary points of the augmented
Lagrangian correspond to points of local minimum/KKT-points of the problem $(\mathcal{P})$. Under such conditions the
problem of unconstrained minimisation of the augmented Lagrangian $\mathscr{L}(x, \lambda, \mu, c)$ is, in a sense,
completely equivalent to the original problem $(\mathcal{P})$. In this case it is natural to call $\mathscr{L}(x,
\lambda, \mu, c)$ \textit{completely exact} (cf. completely exact penalty functions in
\cite{DolgopolikFominyh,Dolgopolik2020}).

The following theorem contains natural sufficient conditions for the complete exactness of the augmented Lagrangian
on the sublevel set $S_c(\gamma)$. The question of whether the complete exactness of this augmented Lagrangian on 
the entire space $X \times H \times \mathbb{R}^d$ can be proved under some additional assumptions remains an
interesting open problem.

\begin{theorem} \label{thrm:CompleteExactness}
Let the assumptions of Theorem~\ref{thrm:GradientEstimates} be satisfied for $V = \Omega_r(\gamma + \varepsilon)$ with
some $r > 0$, $\varepsilon > 0$, and $\gamma > f_*$, and suppose that the augmented Lagrangian $\mathscr{L}(\cdot, c)$
is weakly sequentially l.s.c. for all $c > 0$. Then there exists $c_* > 0$ such that for all $c \ge c_*$ the augmented
Lagrangian $\mathscr{L}(x, \lambda, \mu, c)$ is completely exact on the set $S_c(\gamma)$ in the following sense:
\begin{enumerate}
\item{the optimal values of the problem $(\mathcal{P})$ and problem \eqref{eq:AuxiliaryProblem} coincide;
}

\item{$(x_*, \lambda_*, \mu_*)$ is point of global minimum of $\mathscr{L}(x, \lambda, \mu, c)$ if and only if 
$x_*$ is a globally optimal solution of the problem $(\mathcal{P})$ and $(x_*, \lambda_*, \mu_*)$ is a KKT-point 
of this problem;
}

\item{$(x_*, \lambda_*, \mu_*) \in S_c(\gamma)$ is a stationary point of $\mathscr{L}(x, \lambda, \mu, c)$ if and only
if $(x_*, \lambda_*, \mu_*)$ is a KKT-point of the problem $(\mathcal{P})$ and $f(x_*) \le \gamma$;
}

\item{if $(x_*, \lambda_*, \mu_*) \in S_c(\gamma)$ is a point of local minimum of $\mathscr{L}(x, \lambda, \mu, c)$, 
then $x_*$ is a locally optimal solution of the problem $(\mathcal{P})$, $f(x_*) \le \gamma$, and 
$(x_*, \lambda_*, \mu_*)$ is a KKT-point of this problem.
}
\end{enumerate}
\end{theorem}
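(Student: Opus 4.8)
The plan is to prove the four items in order, letting $c_*$ be the largest of the finitely many thresholds that appear below. Since $\gamma > f_*$, after shrinking $\varepsilon$ one has $\Omega_r(f_* + \varepsilon) \subseteq \Omega_r(\gamma + \varepsilon)$, so every hypothesis imposed on $\Omega_r(\gamma + \varepsilon)$ holds a fortiori on $\Omega_r(f_* + \varepsilon)$. Items~1 and 2 are then nothing but global exactness: the assumptions of Theorem~\ref{thrm:GlobalExactness} are satisfied (with $V = \Omega_r(f_* + \varepsilon)$), so $\mathscr{L}$ is globally exact, which is exactly item~2, and item~1 follows from Lemma~\ref{lem:GlobalExact_OptimalValue} together with the fact that every global minimiser of $\mathscr{L}(\cdot, c)$ lies in $S_c(f_*) \subseteq S_c(\gamma)$.

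For item~3 I would apply the ``in particular'' part of Theorem~\ref{thrm:GradientEstimates} with $V = \Omega_r(\gamma + \varepsilon)$ and $K = 1$ to get a threshold beyond which $\|\nabla\mathscr{L}(x,\lambda,\mu,c)\| \ge \|F(x)\| + |\max\{g(x), -c^{-1}p(x,\mu)\mu\}|$ on $S_c(\gamma)$. If $(x_*,\lambda_*,\mu_*) \in S_c(\gamma)$ is stationary, this forces $F(x_*) = 0$ and $\max\{g(x_*), -c^{-1}p(x_*,\mu_*)\mu_*\} = 0$; unwinding the coordinatewise maximum gives $g_i(x_*) \le 0$, $\mu_{*i} \ge 0$ and $\mu_{*i}g_i(x_*) = 0$. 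Substituting $F(x_*) = 0$ (so $\phi(\|F(x_*)\|^2) = 0$) and the vanishing maximum into \eqref{eq:MultipliersDerivExpr} and using the invertibility of $\mathcal{E}(x_*)\mathcal{E}(x_*)^*$ (Corollary~\ref{crlr:LICQvsPositiveDef}) yields $DF(x_*)[\nabla_x L] = 0$ and $\nabla g(x_*)\nabla_x L + \diag(g_i(x_*)^2)\mu_* = 0$; complementarity kills the diagonal term, so $\langle\nabla g_i(x_*),\nabla_x L\rangle = 0$ for all $i$. Plugging these identities back into the formula for $\nabla_x\mathscr{L}$ in Proposition~\ref{prp:Derivatives} cancels every correction term and leaves $\nabla_x L(x_*,\lambda_*,\mu_*) = 0$, so $(x_*,\lambda_*,\mu_*)$ is a KKT-point with $f(x_*) = \mathscr{L}(x_*,\lambda_*,\mu_*,c) \le \gamma$. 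The converse is the content of part~(ii) of the Remark following Proposition~\ref{prp:Derivatives}, since $\mathscr{L} = f(x_*) \le \gamma$ at such a point.

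The substance is in item~4. A local minimiser in $S_c(\gamma)$ lies in the open effective domain $\Omega \times H \times \mathbb{R}^m$ where $\mathscr{L}$ is differentiable, hence is stationary and, by item~3, a KKT-point with $f(x_*) \le \gamma$; in particular $x_*$ is feasible. To prove $x_*$ locally optimal for $(\mathcal{P})$, I first record a sign fact: for \emph{every} feasible $x$ (so $F(x) = 0$, $g(x) \le 0$, whence $x \in \Omega$ since $b(x) = \psi(0) > 0$ and $\phi(0) = 0$) and \emph{every} $\mu \in \mathbb{R}^m$, the inequality penalty $\langle\mu, g(x)\rangle + \frac{c}{2p(x,\mu)}\big(|g(x)|^2 - |\min\{0, g(x) + c^{-1}p(x,\mu)\mu\}|^2\big)$ from the Remark following \eqref{eq:ExactAugmLagr} is nonpositive; inspecting each coordinate according to the sign of $g_i(x) + c^{-1}p(x,\mu)\mu_i$ shows the $i$-th summand equals either $-\tfrac{p}{2c}\mu_i^2 \le 0$ or $g_i(x)\big(\mu_i + \tfrac{c}{2p}g_i(x)\big)$ with the second factor $\ge 0$, hence $\le 0$. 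Consequently $\mathscr{L}(x,\lambda,\mu,c) \le f(x) + \eta(x,\lambda,\mu)$ at every feasible $x$.

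Finally I would use the identity $\eta(x,\lambda,\mu) = \tfrac12\|\mathcal{E}(x)\mathcal{E}(x)^*(\lambda,\mu) + (DF(x)[\nabla f(x)], \nabla g(x)[\nabla f(x)])\|^2$ from the proof of Lemma~\ref{lem:GlobalExact_OptimalValue}. Positive definiteness of $Q(x_*)[\cdot]$ makes $\mathcal{E}(x)\mathcal{E}(x)^*$ invertible for $x$ near $x_*$, so
\[
  \big(\widehat\lambda(x), \widehat\mu(x)\big) := -\big(\mathcal{E}(x)\mathcal{E}(x)^*\big)^{-1}\big(DF(x)[\nabla f(x)], \nabla g(x)[\nabla f(x)]\big)
\]
is well defined, depends continuously on $x$, annihilates $\eta$, and tends to $(\lambda_*,\mu_*)$ as $x \to x_*$ (by uniqueness, since $\eta(x_*,\lambda_*,\mu_*) = 0$ at a KKT-point). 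For feasible $x$ sufficiently close to $x_*$ the triple $(x,\widehat\lambda(x),\widehat\mu(x))$ lies in the neighbourhood on which $\mathscr{L}(\cdot,c) \ge \mathscr{L}(x_*,\lambda_*,\mu_*,c) = f(x_*)$; since there $\eta = 0$, the previous paragraph gives $\mathscr{L}(x,\widehat\lambda(x),\widehat\mu(x),c) \le f(x)$, so
\[
  f(x) \ge \mathscr{L}\big(x,\widehat\lambda(x),\widehat\mu(x),c\big) \ge f(x_*),
\]
and $x_*$ is a local minimiser of $(\mathcal{P})$. The main obstacle is precisely this item: the coordinatewise sign analysis of the inequality penalty at feasible points and, above all, the continuous $\eta$-annihilating selection of multipliers converging to $(\lambda_*,\mu_*)$, which is what upgrades joint local minimality of $\mathscr{L}$ to local optimality of $x_*$; the remaining items reduce directly to the already established global exactness and gradient estimates.
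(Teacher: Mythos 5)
Your proposal is correct and follows essentially the same route as the paper's proof: items~1--2 via Theorem~\ref{thrm:GlobalExactness} (with the assumptions inherited on $\Omega_r(f_* + \varepsilon) \subseteq \Omega_r(\gamma + \varepsilon)$), item~3 via the gradient estimate of Theorem~\ref{thrm:GradientEstimates} together with the invertibility of $\mathcal{E}(x_*)\mathcal{E}(x_*)^*$, and item~4 via the nonpositivity of the inequality-penalty term at feasible points combined with the continuous $\eta$-annihilating multiplier selection $x \mapsto (\lambda(x), \mu(x))$ defined by \eqref{eq:VariableMultipliersDef}. The only (harmless) deviations are that you verify the coordinatewise sign fact directly where the paper cites \cite[Prop.~3.1, part~(b)]{DiPilloLucidi2001}, and that you should make explicit the one-line step that feasibility together with $f(x_*) \le \gamma$ places $x_*$ in $\Omega_r(\gamma + \varepsilon)$, so the uniform constraint qualification yields the positive definiteness of $Q(x_*)[\cdot]$ that your appeals to Corollary~\ref{crlr:LICQvsPositiveDef} presuppose.
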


\begin{proof}
Note that by the definition of global exactness the validity of the first two statements of the theorem follows 
directly from Theorem~\ref{thrm:GlobalExactness}. Let us prove the last two statements of the theorem. We prove 
the statement about stationary points first, since its proof is simpler than the proof of the statement on locally
optimal solutions.

\textbf{Part~1.} By Theorem~\ref{thrm:GradientEstimates} there exists $c_* > 0$ such that for all $c \ge c_*$ and 
$(x, \lambda, \mu) \in S_c(\gamma)$ the lower estimate of the gradient \eqref{eq:GradEstim_InfesasibilityMes} holds
true. Consequently, for any $c \ge c_*$ and any stationary point $\xi_* = (x_*, \lambda_*, \mu_*) \in S_c(\gamma)$ of 
$\mathscr{L}(x, \lambda, \mu, c)$ one has
\begin{equation} \label{eq:FeasibilityAtKKTPoint}
  F(x_*) = 0, \quad \max\left\{ g(x_*), - \frac{p(x_*, \mu_*)}{c} \mu_* \right\} = 0,
\end{equation}
which implies that 
\[
  f(x_*) \le f(x_*) + \eta(x_*, \lambda_*, \mu_*) = \mathscr{L}(x_*, \lambda_*, \mu_*, c) \le \gamma,
\]
and $x_* \in \Omega_r(\gamma + \varepsilon)$ for any $c > 0$. Hence the quadratic function $Q(x_*)[\cdot]$ is positive
definite by our assumption. 

Observe that from \eqref{eq:FeasibilityAtKKTPoint}, the equality $\nabla \mathscr{L}(x_*, \lambda_*, \mu_*, c) = 0$, 
and Proposition~\ref{prp:Derivatives} (see also \eqref{eq:MultipliersDerivExpr}) it follows that
\[
  0 = \begin{pmatrix} 
    \nabla_{\lambda} \mathscr{L}(\xi_*, c) \\ 
    \nabla_{\mu} \mathscr{L}(\xi_*, c)
  \end{pmatrix}
  = \mathcal{E}(x_*) \mathcal{E}(x_*)^*
  \begin{pmatrix}
    D F(x_*)[\nabla_x L(\xi_*)] \\ 
    \nabla g(x_*) \nabla_x L(\xi_*) + \diag(g_i(x_*)^2) \mu_*
  \end{pmatrix}.
\]
By Corollary~\ref{crlr:LICQvsPositiveDef} the operator $\mathcal{E}(x_*) \mathcal{E}(x_*)^*$ is invertible, which
yields
\[
  D F(x_*)[\nabla_x L(\xi_*)] = 0, \quad 
  \nabla g(x_*) \nabla_x L(\xi_*) + \diag(g_i(x_*)^2) \mu_* = 0.
\]
Hence applying \eqref{eq:FeasibilityAtKKTPoint} and Proposition~\ref{prp:Derivatives} once again one gets that
\[
  0 = \nabla_x \mathscr{L}(x_*, \lambda_*, \mu_*, c) = \nabla_x L(x_*, \lambda_*, \mu_*),
\] 
that is, $(x_*, \lambda_*, \mu_*)$ is a KKT-point of the problem $(\mathcal{P})$.

Conversely, let $(x_*, \lambda_*, \mu_*)$ be a KKT-point of the problem $(\mathcal{P})$ such that $f(x_*) \le \gamma$. 
Then by definition 
\[
  \nabla_x L(x_*, \lambda_*, \mu_*) = 0, \quad F(x_*) = 0, \quad \max\{ g(x_*), \mu_* \} = 0.
\]
Therefore, $\mathscr{L}(x_*, \lambda_*, \mu_* c) = f(x_*) \le \gamma$, i.e. $(x_*, \lambda_*, \mu_*) \in S_c(\gamma)$
for any $c > 0$. Furthermore, with the use of Proposition~\ref{prp:Derivatives} one obtains that for any $c > 0$ the
equality $\nabla \mathscr{L} (x_*, \lambda_*, \mu_*, c) = 0$ holds true, that is, $(x_*, \lambda_*, \mu_*)$ is a
stationary point of $\mathscr{L}(x, \lambda, \mu, c)$ for all $c > 0$.

\textbf{Part~2.} Let $c_* > 0$ be as above. Let us now show that for all $c \ge c_*$ and for any point of local minimum 
$(x_*, \lambda_*, \mu_*) \in S_c(\gamma)$ of $\mathscr{L}(x, \lambda, \mu, c)$ the point $x_*$ is a locally optimal
solution of the problem $(\mathcal{P})$. The fact that $(x_*, \lambda_*, \mu_*)$ is a KKT-point follows directly from
the previous part of the proof.

Indeed, fix any $c \ge c_*$ and a point of local minimum $(x_*, \lambda_*, \mu_*) \in S_c(\gamma)$ of 
$\mathscr{L}(x, \lambda, \mu, c)$. Note that $\nabla \mathscr{L}(x_*, \lambda_*, \mu_*, c) = 0$ by the necessary 
optimality condition. Therefore, equalities \eqref{eq:FeasibilityAtKKTPoint} hold true due to our choice of $c_*$. 
With the use of these equalities one gets that
\[
  f(x_*) \le f(x_*) + \eta(x_*, \lambda_*, \mu_*) = \mathscr{L}(x_*, \lambda_*, \mu_*, c) \le \gamma
\]
and $x_* \in \Omega_t(\gamma)$ for any $t > 0$. Hence the quadratic form $Q(x_*)[\cdot]$ is positive definite by
our assumption. Furthermore, from the previous part of the proof it follows that $(x_*, \lambda_*, \mu_*)$ is a
KKT-point of the problem $(\mathcal{P})$, which implies that $\eta(x_*, \lambda_*, \mu_*) = 0$ and
$\mathscr{L}(x_*, \lambda_*, \mu_*, c) = f(x_*)$.

By the definition of local minimum there exist neighbourhoods $\mathcal{U}_x$ of $x_*$, 
$\mathcal{U}_{\lambda}$ of $\lambda_*$, and $\mathcal{U}_{\mu}$ of $\mu_*$ such that
\[
  f(x_*) = \mathscr{L}(x_*, \lambda_*, \mu_*, c) \le \mathscr{L}(x, \lambda, \mu, c)
  \quad \forall (x, \lambda, \mu) \in \mathcal{U} 
  := \mathcal{U}_x \times \mathcal{U}_{\lambda} \times \mathcal{U}_{\mu}.
\]
Note that
\[
  \left\langle \mu, \max\left\{ g(x), -\frac{p(x, \mu)}{c} \mu \right\} \right\rangle
  + \frac{c}{2 p(x, \mu)} \left| \max\left\{ g(x), -\frac{p(x, \mu)}{c} \mu \right\} \right|^2 \le 0
\]
for any $x$ such that $g(x) \le 0$ (see the proof of \cite[Prop.~3.1, part~(b)]{DiPilloLucidi2001}). 
Therefore by the definition of augmented Lagrangian \eqref{eq:ExactAugmLagr}, for any 
$(x, \lambda, \mu) \in \mathcal{U}$ such that $F(x) = 0$ and $g(x) \le 0$ one has
\begin{equation} \label{eq:AugmLagr_LocMin_FeasPoints}
  f(x_*) \le \mathscr{L}(x, \lambda, \mu, c) \le f(x) + \eta(x, \lambda, \mu).
\end{equation}
As was noted above, the quadratic form $Q(x_*)[\cdot]$ is positive definite, which by 
Corollary~\ref{crlr:LICQvsPositiveDef} implies that the linear operator $\mathcal{E}(x_*) \mathcal{E}(x_*)^*$ is
invertible. It is easily seen that under our assumptions the operator $\mathcal{E}(x) \mathcal{E}(x)^*$ continuously
depends on $x$. Hence taking into account the fact that the set of invertible operators is open and the inversion is
continuous in the uniform operator topology (see, e.g. \cite[Thm.~10.12]{Rudin}), one obtains that there exists 
a neighbourhood $\mathcal{V}_x \subseteq \mathcal{U}_x$ of $x_*$ such that for any $x \in \mathcal{V}_x$ the operator 
$\mathcal{E}(x) \mathcal{E}(x)^*$ is invertible and the corresponding inverse operator continuously depends on $x$.

For any $x \in \mathcal{V}_x$ define $\lambda(x) \in H$ and $\mu(x) \in \mathbb{R}^m$ as a unique solution of 
the following equation:
\begin{equation} \label{eq:VariableMultipliersDef}
  \mathcal{E}(x) \mathcal{E}(x)^* \left( \begin{smallmatrix} \lambda \\ \mu \end{smallmatrix} \right) = 
  - \left( \begin{smallmatrix} DF(x)[\nabla f(x)] \\ \nabla g(x)[\nabla f(x)] \end{smallmatrix} \right).
\end{equation}
By the definition of $\mathcal{E}(x)$ (see Corollary~\ref{crlr:LICQvsPositiveDef}) one has
\begin{align*}
  D F(x)[\nabla_x L(x, \lambda(x), \mu(x))] &= 0,
  \\
  \nabla g(x) \nabla_x L(x, \lambda(x), \mu(x)) + \diag(g_i(x)^2) \mu(x) &= 0
\end{align*}
for all $x \in \mathcal{V}_x$, which implies that $\eta(x, \lambda(x), \mu(x)) = 0$ and, thanks to the uniqueness of
solution of \eqref{eq:VariableMultipliersDef}, $\lambda(x_*) = \lambda_*$ and $\mu(x_*) = \mu_*$. Furthermore,
$\lambda(x)$ and $\mu(x)$ continuously depend on $x$, since the inverse operator of $\mathcal{E}(x) \mathcal{E}(x)^*$
and the right-hand side of \eqref{eq:VariableMultipliersDef} continuously depend on 
$x \in \mathcal{V}_x$. Consequently, replacing, if necessary, the neighbourhood $\mathcal{V}_x$ with a smaller one, we
can suppose that $(x, \lambda(x), \mu(x)) \in \mathcal{U}$ for all $x \in \mathcal{V}_x$. Hence with the use of
\eqref{eq:AugmLagr_LocMin_FeasPoints} one obtains that
\[
  f(x_*) \le \mathscr{L}(x, \lambda(x), \mu(x), c) \le f(x) + \eta(x, \lambda(x), \mu(x)) = f(x)
\]
for any $x \in \mathcal{V}_x$ that is feasible for the problem $(\mathcal{P})$. In other words, $x_*$ is a locally
optimal solution of this problem.
\end{proof}

\subsection{Local exactness}

Although Theorem~\ref{thrm:CompleteExactness} somewhat completely describes an intimate relation between optimal
solutions/KKT-point of the problem $(\mathcal{P})$ and minimisers/statio\-na\-ry points of the augmented Lagrangian
$\mathscr{L}(x, \lambda, \mu, c)$ for any sufficiently large $c > 0$, it does not tell one whether locally optimal
solutions of the problem $(\mathcal{P})$ correspond to the points of local minimum of the augmented Lagrangian. It
is possible that some KKT-points $(x_*, \lambda_*, \mu_*)$, corresponding to \textit{locally} optimal solutions $x_*$
of the problem $(\mathcal{P})$, are only stationary points of $\mathscr{L}(x, \lambda, \mu, c)$, but not its points of
local minimum. The aim of this seciton is to provide simple sufficient conditions for such KKT-points to be points of
local minimum of the augmented Lagrangian for any $c > 0$ large enough. 

Let $(x_*, \lambda_*, \mu_*)$ be a KKT-point of the problem $(\mathcal{P})$, and $f$, $F$, and $g$ be twice Fr\'{e}chet
differentiable at $x_*$. One says that \textit{the second order sufficient optimality conditions} hold true at $x_*$, if
there exists $\rho > 0$ such that
\begin{equation} \label{eq:SecondOrderOptCond}
  D^2_{xx} L(x_*, \lambda_*, \mu_*)[z, z] \ge \rho \| z \|^2 \quad \forall z \in \mathscr{C}(x_*),
\end{equation}
where
\[
  \mathscr{C}(x_*) = \Big\{ z \in X \mid D F(x_*)[z] = 0, 
  \: \langle \nabla g_i(x_*), z \rangle = 0, \: i \in M(x_*) \Big\}
\]
is \textit{the critical cone} at the point $x_*$. We say that the strict complementarity condition is satisfied for the
KKT-point $(x_*, \lambda_*, \mu_*)$, if $(\mu_*)_i > 0$ for any index $i \in M(x_*)$.

\begin{theorem}
Let $(x_*, \lambda_*, \mu_*)$ be a KKT-point of the problem $(\mathcal{P})$ satisfying the strict complementarity
condition, the functions $f$, $F$, and $g$ be twice Fr\'{e}chet differentiable at $x_*$, $\phi$ be differentiable at
zero, and $\phi'(0) > 0$. Suppose also that $Q(x_*)[\cdot]$ is positive definite and the second order sufficient
optimality conditions hold true at $x_*$. Then there exist $c_* > 0$ and $\theta > 0$ such that for any $c \ge c_*$ the
triplet $(x_*, \lambda_*, \mu_*)$ is point of isolated local minimum of $\mathscr{L}(\cdot, c)$ and 
\[
  \mathscr{L}(x, \lambda, \mu, c) \ge \mathscr{L}(x_*, \lambda_*, \mu_*, c) 
  + \theta \Big( \| x - x_* \|^2 + \| \lambda - \lambda_* \|^2 + |\mu - \mu_*|^2 \Big).
\]
for any $(x, \lambda, \mu)$ in a neighbourhood of $(x_*, \lambda_*, \mu_*)$.
\end{theorem}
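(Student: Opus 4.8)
The plan is to fix $c$ large, show that the Hessian quadratic form of $\mathscr{L}(\cdot,c)$ at the KKT-point $(x_*,\lambda_*,\mu_*)$ is positive definite with a coercivity constant that stays bounded below as $c\to\infty$, and then read off both the isolated local minimum and the stated quadratic growth from a second-order Taylor expansion. The first task is to remove the nonsmoothness coming from the $\max$. Since $(x_*,\lambda_*,\mu_*)$ is a KKT-point satisfying strict complementarity, for every active index $i\in M(x_*)$ one has $g_i(x_*)=0$ while $-c^{-1}p(x_*,\mu_*)(\mu_*)_i<0$ (note $b(x_*)=\psi(0)>0$, so $p(x_*,\mu_*)>0$ and $x_*\in\Omega$), and for every inactive index $g_i(x_*)<0$ while $(\mu_*)_i=0$. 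Hence on a full neighbourhood of $(x_*,\lambda_*,\mu_*)$ (including perturbations of $\mu$) the vector $\max\{g(x),-c^{-1}p(x,\mu)\mu\}$ freezes onto a fixed smooth branch, equal to $g_i(x)$ for active $i$ and to $-c^{-1}p(x,\mu)\mu_i$ for inactive $i$; consequently $\mathscr{L}(\cdot,c)$ agrees there with a function that is twice Fr\'{e}chet differentiable at $x_*$, and I may expand it to second order. I will also use that $\eta(x_*,\lambda_*,\mu_*)=0$ and, more importantly, that every squared term defining $\eta$ (see \eqref{eq:EtaDef}) vanishes at the KKT-point, so the Hessian of $\eta$ is of Gauss--Newton type (a sum of squares of first differentials), positive semidefinite, and its restriction to pure dual directions equals $2Q(x_*)[\delta\lambda,\delta\mu]$ by \eqref{eq:QuadFormDef}.

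Next I would analyse the Hessian form $B_c[(z,\delta\lambda,\delta\mu)]$ in the increments $z=x-x_*$, $\delta\lambda=\lambda-\lambda_*$, $\delta\mu=\mu-\mu_*$; since the KKT-point is a stationary point of $\mathscr{L}(\cdot,c)$ (see the remark following Proposition~\ref{prp:Derivatives}) the first-order term vanishes and $B_c$ governs everything. The dual block is coercive, $B_c[(0,\delta\lambda,\delta\mu)]\ge 2a(\|\delta\lambda\|^2+|\delta\mu|^2)$ up to an $O(1/c)$ correction, because positive definiteness of $Q(x_*)[\cdot]$ supplies the $\eta$-contribution while the penalty adds nothing to the $\lambda$-block at $x_*$ (there $F(x_*)=0$ and $\phi(0)=0$, and the mixed $\lambda$--$x$ penalty term vanishes since $DF(x_*)^*F(x_*)=0$). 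The primal block equals $D^2_{xx}L(x_*,\lambda_*,\mu_*)[z,z]$ plus a nonnegative $\eta$-contribution plus the penalty term, which to leading order is $c\,\kappa\,(\|DF(x_*)z\|^2+\sum_{i\in M(x_*)}\langle\nabla g_i(x_*),z\rangle^2)$ with $\kappa>0$ (here $\phi'(0)>0$ is essential). Writing $\Theta z:=\big(DF(x_*)z,(\langle\nabla g_i(x_*),z\rangle)_{i\in M(x_*)}\big)$, one has $\Ker\Theta=\mathscr{C}(x_*)$, on which the second-order sufficient condition \eqref{eq:SecondOrderOptCond} gives $D^2_{xx}L[z,z]\ge\rho\|z\|^2$.

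The main step is an infinite-dimensional Finsler/Hestenes argument. Since $Q(x_*)[\cdot]$ is positive definite, Lemma~\ref{lem:LICQvsPositiveDef} shows that $\mathcal{T}$ (equivalently $\Theta$) is surjective, hence has closed range, so $\|\Theta z\|\ge\beta\|z\|$ on $(\Ker\Theta)^\perp=\mathscr{C}(x_*)^\perp$ for some $\beta>0$. Decomposing $z=u+v$ with $u\in\mathscr{C}(x_*)$ and $v\in\mathscr{C}(x_*)^\perp$, and combining $D^2_{xx}L[u,u]\ge\rho\|u\|^2$ with $c\kappa\|\Theta z\|^2\ge c\kappa\beta^2\|v\|^2$, a Young's-inequality estimate on the indefinite cross contribution $D^2_{xx}L[u,v]$ yields, for $c$ large, primal coercivity $B_c[(z,0,0)]\ge\theta_1\|z\|^2$ with $\theta_1$ independent of $c$. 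The remaining primal--dual cross terms (the $\langle\delta\lambda,DF(x_*)z\rangle$-type term from $L$, the $\delta\mu_i\langle\nabla g_i(x_*),z\rangle$-type terms from the resolved inequality part, and the $\eta$ cross terms) are absorbed against a reserved fraction of the large penalty term $c\kappa\|\Theta z\|^2$, at the cost of only an $O(1/c)$ loss in the dual coercivity. Splitting the penalty coefficient into a fixed fraction for the Finsler step and the rest for this absorption, I obtain $B_c[(z,\delta\lambda,\delta\mu)]\ge 2\theta(\|z\|^2+\|\delta\lambda\|^2+|\delta\mu|^2)$ for all $c\ge c_*$, with $\theta>0$ uniform in $c$.

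To conclude, fix $c\ge c_*$. As $\mathscr{L}(\cdot,c)$ is twice Fr\'{e}chet differentiable at the KKT-point once the $\max$ is resolved, Taylor's formula with Peano remainder gives $\mathscr{L}(x,\lambda,\mu,c)=\mathscr{L}(x_*,\lambda_*,\mu_*,c)+\tfrac12 B_c[(z,\delta\lambda,\delta\mu)]+o(\|(z,\delta\lambda,\delta\mu)\|^2)$; the remainder is controlled by $\tfrac12\theta\|(z,\delta\lambda,\delta\mu)\|^2$ on a (possibly $c$-dependent) neighbourhood, which after relabelling the constant yields exactly the claimed inequality $\mathscr{L}(x,\lambda,\mu,c)\ge\mathscr{L}(x_*,\lambda_*,\mu_*,c)+\theta(\|x-x_*\|^2+\|\lambda-\lambda_*\|^2+|\mu-\mu_*|^2)$ and shows that $(x_*,\lambda_*,\mu_*)$ is an isolated local minimiser. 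I expect the third step to be the principal obstacle: making the infinite-dimensional Finsler argument rigorous (it genuinely depends on the closed-range property furnished by positive definiteness of $Q(x_*)$, which fails in general Banach spaces) and carrying out the bookkeeping that lets a single large penalty coefficient simultaneously upgrade \eqref{eq:SecondOrderOptCond} on $\mathscr{C}(x_*)$ to full primal coercivity and dominate all cross terms while keeping $\theta$ independent of $c$. Verifying that strict complementarity freezes the $\max$ onto a fixed smooth branch on a full neighbourhood is a necessary but more routine preliminary.
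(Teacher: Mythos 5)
Your proposal is correct and follows essentially the same route as the paper's own proof: strict complementarity resolves the $\max$ onto smooth branches, the Peano second-order expansion at the stationary KKT-point produces the dual block $Q(x_*)[\Delta\lambda,\Delta\mu]$ (positive definite by hypothesis, up to the $O(1/c)$ terms $-\psi(0)\Delta\mu_i^2/2c$ from inactive indices), primal coercivity comes from combining the second-order sufficient condition on $\mathscr{C}(x_*)$ with the surjectivity of $\mathcal{T}$ (Lemma~\ref{lem:LICQvsPositiveDef}) and the open-mapping bound $\| \mathcal{T} z \| \ge \tau \| z_2 \|$ on $\mathscr{C}(x_*)^{\bot}$ for large $c$, and the primal--dual cross terms are absorbed by Young-type estimates. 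The paper's proof does exactly this, differing only in trivial bookkeeping (e.g.\ it absorbs the $\eta$ cross terms against a fraction of $Q(x_*)$ and of the primal coercivity constant rather than of the penalty term), so your plan, including the step you flag as the principal obstacle, matches the published argument.
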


\begin{proof}
Fix some $\theta > 0$ and denote $\xi = (x, \lambda, \mu)$, $\xi_* = (x_*, \lambda_*, \mu_*)$, and
\[
  \omega_c(\xi) = \mathscr{L}(\xi, c) 
  - \theta \Big( \| x - x_* \|^2 + \| \lambda - \lambda_* \|^2 + |\mu - \mu_*|^2 \Big).
\]
Our aim is to compute a second order expansion of the function $\omega_c$ in a neighbourhood of $\xi_*$ and utilise it
to prove the theorem.

Firstly, note that from Proposition~\ref{prp:Derivatives} and the fact that $(x_*, \lambda_*, \mu_*)$ is a KKT-point it
follows that $\nabla \mathscr{L}(x_*, \lambda_*, \mu_*, c) = 0$ for all $c > 0$. Hence $\nabla \omega_c(\xi_*) = 0$.

For any $i \in I$ denote
\[
  G_i(\xi, c) = \mu_i \max\left\{ g_i(x), - \frac{p(x, \mu)}{c} \mu_i \right\}
  + \frac{c}{2 p(x, \mu)} \left\{ g_i(x), - \frac{p(x, \mu)}{c} \mu_i \right\}^2.
\]
For any $i \in M_1 = M \setminus M(x_*) = \{ i \in M \colon g_i(x_*) < 0 \}$ one has $(\mu_*)_i = 0$ and
\[
  G_i(\xi_*, c) = 0, \quad G_i(\xi, c) = - \frac{p(x, \mu)}{2c} \mu_i^2
\]
in a neighbourhood of $\xi_*$, which implies that
\[
  G_i(\xi_* + \Delta \xi, c) = - \frac{\psi(0)}{2 c} \Delta \mu_i^2 + o(\| \Delta \xi \|^2).
\]
In turn, for any $i \in M_2 := M(x_*)$ one has $(\mu_*)_i > 0$, thanks to the strict complementarity condition, and
\[
  G_i(\xi_*, c) = 0, \quad G_i(\xi, c) = \mu_i g_i(x) + \frac{c}{2 p(x, \mu)} g_i(x)^2
\]
in a neighbourhood of $\xi_*$, which yields the expansion:
\begin{align*}
  G_i(\xi_* + \Delta \xi, c) &= 
  (\mu_*)_i \Big( \langle \nabla g_i(x_*), \Delta x \rangle + \frac{1}{2} D^2 g_i(x_*)[\Delta x, \Delta x] \Big)
  \\
  &+ \Delta \mu_i \langle \nabla g_i(x_*), \Delta x \rangle 
  + \frac{c}{2 \psi(0)} \langle \nabla g_i(x_*), \Delta x \rangle^2 + o(\| \Delta \xi \|^2).
\end{align*}
Hence taking into account the definition of the augmented Lagrangian one gets that the function $\omega_c(\cdot)$ admits
the following second order expansion in a neighbourhood of $\xi_*$:
\begin{equation} \label{eq:SecondOrderExpans}
  \omega_c(\xi_* + \Delta \xi) - \omega_c(\xi_*) = H_c(\Delta x) + Q(x_*)[\Delta \lambda, \Delta \mu]
  + R_c(\Delta \xi) - \theta \| \Delta \xi \|^2 + o(\| \Delta \xi \|^2).
\end{equation}
Here
\begin{equation} \label{eq:SecondOrderXPart}
\begin{split}
  H_c(\Delta x) &= \frac{1}{2} D^2_{xx} L(x_*, \lambda_*, \mu_*)[\Delta x, \Delta x]
  \\
  &+ \frac{c}{2} \big( 1 + \| \lambda_* \|^2 ) \phi'(0) \big\| D F(x)[\Delta x] \big\|^2
  + \frac{c}{2 \psi(0)} \sum_{i \in M_2} \langle \nabla g_i(x_*), \Delta x \rangle^2
\end{split}
\end{equation}
and
\begin{align*}
  R_c(\Delta \xi) &= \langle \Delta \lambda, D F(x_*)[\Delta x] \rangle 
  - \sum_{i \in M_1} \frac{\psi(0)}{2 c} \Delta \mu_i^2
  + \sum_{i \in M_2} \Delta \mu_i \langle \nabla g_i(x_*), \Delta x \rangle
  \\
  &+ \frac{1}{2} \Big\| D F(x_*)\Big[ P[\Delta x] \Big] \Big\|^2
  + \frac{1}{2} \sum_{i = 1}^m 
  \Big\langle \nabla g_i(x_*), P[\Delta x] \Big \rangle^2
  \\
  &+ \Big\langle D F(x_*)\Big[ P[\Delta x] \Big], 
  D F(x_*)\Big[ Z[\Delta \lambda, \Delta \mu] \Big] \Big\rangle
  \\
  &+ \sum_{i = 1}^m
  \Big\langle \nabla g_i(x_*), P[\Delta x] \Big \rangle
  \Big( \langle \nabla g_i(x_*), Z[\Delta \lambda, \Delta \mu] \rangle
  + g_i(x)^2 \Delta \mu_i \Big),
\end{align*}
where 
\[
  P[\Delta x] = D_x(\nabla_x L(\xi_*))[\Delta x], \quad
  Z[\Delta \lambda, \Delta \mu] = D F(x_*)^*[\Delta \lambda] + \sum_{i = 1}^m \Delta \mu_i \nabla g_i(x_*).
\]
Let us estimate the function $R_c(\cdot)$ from below. Fix some $\varepsilon \in (0, 1)$. Applying
the inequality
\[
  \langle x, y \rangle \ge - \| x \| \| y \| 
  \ge - \frac{(1 + \varepsilon)}{2} \| x \|^2 - \frac{1}{2(1 + \varepsilon)} \| y \|^2
  \quad \forall x, y \in X
\]
to the last two terms of $R_c(\cdot)$ and taking into account the definition of $Q(x)[\cdot]$ (see
\eqref{eq:QuadFormDef}), one obtains that
\begin{multline*}
  R_c(\Delta \xi) \ge \langle \Delta \lambda, D F(x_*)[\Delta x] \rangle 
  - \sum_{i \in M_1} \frac{\psi(0)}{2 c} \Delta \mu_i^2
  + \sum_{i \in M_2} \Delta \mu_i \langle \nabla g_i(x_*), \Delta x \rangle
  \\
  - \frac{\varepsilon}{2} \Big\| D F(x_*)\Big[ P[\Delta x] \Big] \Big\|^2
  - \frac{\varepsilon}{2} \sum_{i = 1}^m \Big\langle \nabla g_i(x_*), P[\Delta x] \Big \rangle^2
  - \frac{1}{1 + \varepsilon} Q(x_*)[\Delta \lambda, \Delta \mu].
\end{multline*}
Clearly, there exists $K > 0$ such that
\[
  \frac{1}{2} \Big\| D F(x_*)\Big[ P[\Delta x] \Big] \Big\|^2
  + \frac{1}{2} \sum_{i = 1}^m \Big\langle \nabla g_i(x_*), P[\Delta x] \Big \rangle^2
  \le K \| \Delta x \|^2.
\]
Denote $a = a_{\max}(Q(x_*))$. Applying the inequality
\[
  \langle x, y \rangle \ge - \frac{\varepsilon a}{4} \| x \|^2 - \frac{1}{\varepsilon a} \| y \|^2 
  \quad \forall x, y \in X
\]
to the first and third terms of $R_c(\cdot)$, one finally gets that for any $c > 2 \psi(0) / \varepsilon a$ the
following inequality holds true:
\begin{align*}
  R_c(\Delta \xi) \ge &- \frac{1}{\varepsilon a} \| D F(x_*)[\Delta x] \|^2
  - \frac{1}{\varepsilon a} \sum_{i \in M_2} \langle \nabla g_i(x_*), \Delta x \rangle^2 
  - K \varepsilon \| \Delta x \|^2
  \\
  &- \frac{\varepsilon a}{4} \| (\Delta \lambda, \Delta \mu) \|^2 
  - \frac{1}{1 + \varepsilon} Q(x_*)[\Delta \lambda, \Delta \mu].
\end{align*}
Hence taking into account \eqref{eq:SecondOrderExpans} and the definition of $H_c(\Delta x)$ one obtains that
\begin{align*}
  \omega_c(\xi_* + \Delta \xi) - \omega_c(\xi_*) &\ge H_{c - c_0}(\Delta x)
  + \frac{\varepsilon a}{4} \| (\Delta \lambda, \Delta \mu) \|^2 
  \\
  &- K \varepsilon \| \Delta x \|^2 - \theta \| \Delta \xi \|^2 + o(\| \Delta \xi \|^2),
\end{align*}
for any $\varepsilon \in (0, 1)$ and $c > c_0 := 2 \max\{ \psi(0), 1 / \phi'(0) \} / \varepsilon a$. 

Let us check that there exist $c_* > 0$ and $\beta > 0$ such that
\begin{equation} \label{eq:XPartPositiveDef}
  H_c(\Delta x) \ge \beta \| \Delta x \|^2 \quad \forall \Delta x \in X, \: c > c_*.
\end{equation}
Then choosing any $0 < \varepsilon < \beta / 3 K$ and $0 < \theta < \min\{ \varepsilon a / 8, \beta / 3 \}$ one
obtains that
\[
  \omega_c(\xi_* + \Delta \xi) - \omega_c(\xi_*) 
  \ge \min\left\{ \frac{\varepsilon a}{8}, \frac{\beta}{3} \right\} \| \Delta \xi \|^2 + o(\| \Delta \xi \|^2),
\]
for any $c > c_* + c_0$. Hence, as one can easily check, $\omega_c(\xi) \ge \omega_c(\xi_*)$ for any $\xi$ from 
a sufficiently small neighbourhood of $\xi_*$, which implies the required result.

Thus, it remains to prove inequality \eqref{eq:XPartPositiveDef}. To this end, introduce the linear operator
$\mathcal{T} \colon X \to H \times \mathbb{R}^{m(x_*)}$, defined as
\[
  \mathcal{T} z = \big\{ D F(x_*)[z] \big\} \times \prod_{i \in M(x_*)} \{ \langle \nabla g_i(x_*), z \rangle \}
  \quad \forall z \in X.
\] 
Note that the kernel of this operator coincides with the critical cone $\mathscr{C}(x_*)$. For any $z \in X$, below we
denote by $z_1$ the orthogonal projection of $z$ onto $\mathscr{C}(x_*)$ and by $z_2$ the orthogonal projection of $z$
onto the orthogonal complement of $\mathscr{C}(x_*)$. Then $z = z_1 + z_2$ for any $z \in X$.

Let $\Theta > 0$ be such that
\[
  \Big| D^2_{xx} L(\xi_*)[x, y] \Big| \le \Theta \| x \| \| y \| \quad \forall x, y \in X.
\]
Then with the use of the second order sufficient optimality conditions \eqref{eq:SecondOrderOptCond} one gets that
\begin{multline*}
  \frac{1}{2} D^2_{xx} L(\xi_*)[z, z] = \frac{1}{2} D^2_{xx} L(\xi_*)[z_1, z_1]
  + D^2_{xx} L(\xi_*)[z_1, z_2] + \frac{1}{2} D^2_{xx} L(\xi_*)[z_2, z_2]
  \\
  \ge \rho \| z_1 \|^2 - \Theta \| z_1 \| \| z_2 \| - \Theta \| z_2 \|^2
  \ge \frac{\rho}{2} \| z_1 \|^2 - \left( \Theta + \frac{\Theta^2}{2 \rho} \right) \| z_2 \|^2,
\end{multline*}
for any $z \in X$.

By Lemma~\ref{lem:LICQvsPositiveDef} the operator $\mathcal{T}$ is surjective due to our assumption on the positive
definiteness of $Q(x_*)[\cdot]$. Consequently, by the open mapping theorem there exists $\tau > 0$ such that
\[
  \| \mathcal{T} z \| \ge \tau \| z_2 \| \quad \forall z \in X
\]
(see, e.g. \cite{BorweinDontchev})). Hence taking into account the definition of $H_c$ (see \eqref{eq:SecondOrderXPart})
one obtains that
\begin{align*}
  H_c(z) &= \frac{1}{2} D^2_{xx} L(x_*, \lambda_*, \mu_*)[z, z]
  + \frac{c}{2} \big( 1 + \| \lambda_* \|^2 ) \phi'(0) \big\| D F(x)[z] \big\|^2
  \\
  &+ \frac{c}{2 \psi(0)} \sum_{i \in M(x_*)} \langle \nabla g_i(x_*), z \rangle^2 
  \\
  &\ge \frac{\rho}{2} \| z_1 \|^2 - \left( \Theta + \frac{\Theta^2}{2 \rho} \right) \| z_2 \|^2
  + \frac{c}{2} \min\left\{ \phi'(0), \frac{1}{\psi_0} \right\} \| \mathcal{T} z \|^2
  \\
  &\ge \frac{\rho}{2} \| z_1 \|^2 
  + \left( \frac{c \tau}{2} \min\left\{ \phi'(0), \frac{1}{\psi(0)} \right\} - \Theta - \frac{\Theta^2}{2 \rho} \right) 
  \| z_2 \|^2
  \ge \frac{\rho}{2} \| z \|^2
\end{align*}
for any $z \in X$ and 
\[
  c > \frac{\rho^2 + 2 \Theta \rho + \Theta^2}{\rho \tau \min\{ \phi'(0), 1 / \psi(0) \}},
\]
which completes the proof of the theorem.
\end{proof}

\section{Conclusions}

In this paper, we developed a general theory of exact augmented Lagrangians for constrained optimization problems in
Hilbert spaces with inequality and nonlinear operator equality constraints. The core result of this theory is the lower
estimate of the gradient of the augmented Lagrangian via the infeasibility measure from
Theorem~\ref{thrm:GradientEstimates}, which allows one to obtain several types of sufficient conditions for the global
or complete exactness of the augmented Lagrangian. These conditions ensure that local/global minimisers or critical
points of the augmented Lagrangian correspond to locally/globally optimal solutions or KKT-points of the constrained
optimization problem. Main results of the paper are obtained with the use of a nonlocal constraint qualification, which
is reduced to LICQ in this finite dimensional case, and is closely related to assumptions on nonlocal metric regularity
of constraints.

Various applications of the theoretical results from this paper to constrained variational problems, problems with
PDE constraints, and optimal control problems, as well as several numerical examples, will be presented in 
the second part of our study.

\bibliographystyle{abbrv}  
\bibliography{ExAugmLagr_bibl}

\end{document}